\documentclass[11pt]{article}
\usepackage{inputenc}
\usepackage{enumerate}
\usepackage{amssymb}
\usepackage{amsmath}
\usepackage{amsthm}
\usepackage{tikz-cd}
\usepackage[square,sort,comma,numbers]{natbib}
\usepackage{mathpazo}

\usepackage{extarrows}
\usepackage{color}
\usepackage{setspace}
\usepackage{hyperref}
\usepackage{graphicx, subfig}
\usepackage{natbib}
\usepackage{setspace}
\theoremstyle{definition}
\newtheorem{Def}{Definition}[section]
\newtheorem{Exa}[Def]{Examples}
\newtheorem{Rem}[Def]{Remark}
\theoremstyle{plain}
\newtheorem{Thm}[Def]{Theorem}
\newtheorem{Lem}[Def]{Lemma}
\newtheorem{Pro}[Def]{Proposition}
\newtheorem{Cor}[Def]{Corollary}

\newtheorem*{CNcon}{The coarse Novikov conjecture}
\usepackage[left=2.6cm, right=2.6cm, top=3cm, bottom=3cm]{geometry}
\usepackage[all]{xy}
\usepackage{bm}

\parskip 7pt

\def\Box{{\bf Box}}
\def\IN{\mathbb N}
\def\IR{\mathbb R}
\def\IE{\mathbb E}
\def\IC{\mathbb C}
\def\IL{\mathbb L}
\def\IA{\mathbb A}
\def\A{\mathcal A}
\def\C{\mathcal C}
\def\B{\mathcal B}

\def\O{\mathcal O}

\def\K{\mathcal K}

\def\Q{\mathcal Q}
\def\H{\mathcal H}
\def\S{\mathcal S}
\def\N{\mathcal N}

\def\supp{\textup{supp}}
\def\prop{\textup{Prop}}
\def\diam{\textup{diam}}
\def\Cl{\textup{Cliff}_{\IC}}
\def\wox{\widehat{\otimes}}
\def\ox{\otimes}
\def\wh{\widehat}
\def\ox{\otimes}
\def\ICu{\mathbb C_{u,\infty}}
\def\ICuL{\mathbb C_{L,u,\infty}}
\def\Cau{C^*_{u,\infty}}
\def\CauL{C^*_{L,u,\infty}}
\def\PdX{(P_d(X_n))_{n\in\IN}}

\author{Liang Guo\and Zheng Luo\and Qin Wang\and Yazhou Zhang}
\title{A Bott periodicity theorem for $\ell^p$-spaces and the coarse Novikov conjecture at infinity\footnote{Supported in part by NSFC (No. 11831006, 12171156).}}
\begin{document}
\maketitle

\begin{abstract}
We formulate and prove a Bott periodicity theorem for an $\ell^p$-space ($1\leq p<\infty$). For a proper metric space $X$ with bounded geometry, we introduce a version of $K$-homology at infinity, denoted by $K_*^{\infty}(X)$, and the Roe algebra at infinity, denoted by $C^*_{\infty}(X)$. Then the coarse assembly map descends to a map from $\lim_{d\to\infty}K_*^{\infty}(P_d(X))$ to $K_*(C^*_{\infty}(X))$, called the coarse assembly map at infinity. We show that to prove the coarse Novikov conjecture, it suffices to prove the coarse assembly map at infinity is an injection. As a result, we show that the coarse Novikov conjecture holds for any metric space with bounded geometry which admits a fibred coarse embedding into an $\ell^p$-space. These include all box spaces of a residually finite hyperbolic group and a large class of warped cones of a compact space with an action by a hyperbolic group.

\end{abstract}

\begin{spacing}{0.5}
\tableofcontents
\end{spacing}

\section{Introduction}

The Novikov conjecture claims that the higher signatures of closed oriented smooth manifolds are invariant under orientation preserving homotopy equivalences. It is implied by the injectivity of the Baum-Connes assembly map \cite{BCH1994}. The coarse Novikov conjecture, as a geometric analogue of the Novikov conjecture, is the injectivity part of the coarse Baum--Connes conjecture \cite{HR1993,Roe1993}. It offers a technique to determine when an elliptic operators higher index on a noncompact complete Riemannian manifold is nonzero. A detailed survey on the coarse Novikov conjecture can be found in \cite{Yu2006}.

In \cite{CWY2013}, X.~Chen, Q.~Wang and G.~Yu generalized Gromov's notion of coarse embedding into Hilbert space and introduced the notion of fibred coarse embedding (see Definition \ref{FCE}) and showed that if a discrete metric space $X$ with bounded geometry admits a fibred coarse embedding into Hilbert space, then the maximal coarse Baum--Connes conjecture holds for $X$. Later in \cite{Martin2014}, M.~Finn-Sell showed that the coarse Novikov conjecture also holds for $X$ in such a case.

Our main theorem in this paper states as follows:

\begin{Thm}\label{main result}\emph{[Theorem \ref{main theorem}]}
Let $X$ be a metric space with bounded geometry. If $X$ admits a fibred coarse embedding into an $\ell^p$-space for some $p\in [1,\infty)$, then the coarse Novikov conjecture holds for $X$, i.e. the coarse assembly map
$$\mu:\lim_{d\to\infty}K_*(P_d(X))\to K_*(C^*(X))$$
is injective.
\end{Thm}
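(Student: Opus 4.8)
The plan is to reduce the coarse Novikov conjecture for $X$ to the injectivity of the coarse assembly map at infinity, and then to prove that injectivity by a Dirac--dual-Dirac argument over the $\ell^p$-valued fibred coarse embedding, with the $\ell^p$-Bott periodicity theorem of this paper as the engine. First I would invoke the reduction established earlier: it suffices to show that the coarse assembly map at infinity $\mu_\infty:\lim_{d\to\infty}K_*^\infty(P_d(X))\to K_*(C^*_\infty(X))$ is injective. All subsequent steps are devoted to this.

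Next I would unpack the hypothesis. A fibred coarse embedding of $X$ into an $\ell^p$-space $V$ supplies a field of fibres $\{V_x\}$ each isometric to $V$, control functions $\rho_\pm$ with $\rho_-\to\infty$, and an exhausting family of bounded subsets of $X$ outside of which the structure trivializes over balls of any prescribed radius with coherent affine-isometric transition data, so that on each such bounded region far out one has an honest coarse embedding into $V$. The key point is that the objects ``at infinity'' --- the Roe algebra at infinity $C^*_\infty(X)$ and $K$-homology at infinity --- remember only the germ at the ends of $X$ and involve only finite-propagation data, and over that germ the fibred embedding may be used exactly as if it were a genuine coarse embedding into $V$.

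With this in hand I would, following the Higson--Kasparov--Yu scheme adapted to $\ell^p$ and to ``infinity'', construct a twisted Roe algebra at infinity $\A$ --- a Roe-type algebra at infinity with coefficients in the field of Bott algebras $\{\A(V_x)\}$ --- together with its localization $\A_L$ and a twisted assembly map $K_*(\A_L)\to K_*(\A)$. The $\ell^p$-Bott--Dirac asymptotic morphisms then yield Bott homomorphisms $\beta_L$ and $\beta$ fitting into a commuting square
\[
\begin{array}{ccc}
\lim_{d\to\infty} K_*^{\infty}(P_d(X)) & \xrightarrow{\ \mu_{\infty}\ } & K_*(C^*_{\infty}(X)) \\
\downarrow & & \downarrow \\
\lim_{d\to\infty} K_*(\A_L) & \xrightarrow{\ \cong\ } & K_*(\A)
\end{array}
\]
whose vertical arrows are $\beta_L$ and $\beta$. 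I would then argue, first, that the bottom arrow is an isomorphism: the twisted algebras at infinity are assembled from uniformly bounded pieces of the exhausting cover, and a Mayer--Vietoris and cutting-and-pasting argument (using bounded geometry and the quantitative control $\rho_\pm$ to keep the decompositions uniform) reduces the computation to the coefficient algebras $\A(V_x)$, whose $K$-theory equals that of $\IC$ precisely by the $\ell^p$-Bott periodicity theorem, and there the twisted assembly map is manifestly an isomorphism. Second, I would show that $\beta_L$ is injective, using the dual-Dirac construction for $V$ --- again supplied by the $\ell^p$-Bott periodicity theorem --- to produce a one-sided inverse on $K$-homology at infinity, which only needs the bounded-region behaviour of the fibred embedding. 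From the square, $\beta\circ\mu_\infty$ equals the bottom isomorphism precomposed with $\beta_L$, hence is injective, so $\mu_\infty$ is injective, and by the reduction this proves the theorem.

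The hard part is twofold. One must design the twisted Roe algebra at infinity to be at once large enough to receive the Bott map from $C^*_\infty(X)$ and rigid enough that its $K$-theory is computable by Mayer--Vietoris; the bounded-geometry hypothesis and the precise quantitative form of the $\ell^p$-fibred embedding are what make this possible, and verifying that the decompositions descend correctly to ``infinity'' is delicate. The other difficulty is that the Bott and Dirac morphisms are a priori only defined fibrewise over bounded sets, so one must check that they glue across the affine-isometric transition data and pass consistently to the algebras at infinity --- and, throughout, that the $\ell^p$-adapted Bott--Dirac operators and their functional calculus satisfy the required asymptotic and homotopy estimates, which is where the failure of $\ell^p$ to be a Hilbert space forces genuinely new arguments rather than a transcription of the Hilbert-space proof.
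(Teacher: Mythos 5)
Your overall architecture coincides with the paper's: reduce to injectivity of the evaluation/assembly map at infinity (Theorem \ref{CNC at inf}), twist the Roe and localization algebras at infinity by the $\ell^p$-Bott algebra $\A(B)$ using the trivializations of the fibred coarse embedding, prove that the twisted evaluation at infinity is an isomorphism by a cutting-and-pasting Mayer--Vietoris argument (Theorem \ref{twisted Baum-Connes}), and conclude by a diagram chase in the commuting square. The genuine gap is in your second step for the left vertical map: you propose to prove injectivity of $\beta_L$ by a dual-Dirac construction ``again supplied by the $\ell^p$-Bott periodicity theorem,'' producing a one-sided inverse. The Bott periodicity theorem of this paper (Theorem \ref{K of A(B)}) supplies only the Bott homomorphism $\beta_{x_0}:\S\to\A(B)$ and the fact that it induces an isomorphism on $K$-theory; it does not supply any Dirac-type asymptotic morphism out of $\A(B)$, let alone one defined fibrewise and glued along the affine-isometric transition data at infinity. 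For $p\ne 2$ this is a real obstruction rather than a bookkeeping issue: the identification of $\A(B)$ with the Hilbert-space picture goes through the extended Mazur map, which is a nonlinear uniform homeomorphism on balls, so Clifford structures and Dirac operators do not transport along it, and no dual-Dirac element is constructed anywhere in the paper. As written, your ``one-sided inverse on $K$-homology at infinity'' is unsupported. The gap is repairable, and the paper repairs it without any Dirac morphism: since $\beta_L$ takes values in localization algebras, whose $K$-theory is invariant under strongly Lipschitz homotopies and admits Mayer--Vietoris sequences, one reduces to the zero-skeleton, where $(\beta_L)_*$ becomes a product-modulo-sum of copies of the coefficient isomorphism $K_*(\S)\to K_*(\A(B))$, i.e.\ exactly the Bott periodicity theorem; this gives that $(\beta_L)_*$ is in fact an isomorphism (Theorem \ref{BottL}), which is more than the injectivity you need.

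A secondary, non-fatal misattribution: you justify the isomorphism at the bottom of the square by saying the Mayer--Vietoris reduction lands in the coefficient algebras, ``whose $K$-theory equals that of $\IC$ precisely by the $\ell^p$-Bott periodicity theorem, and there the twisted assembly map is manifestly an isomorphism.'' In the paper's proof of Theorem \ref{twisted Baum-Connes}, Bott periodicity plays no role at all: after decomposing into $(G,r)$-separated coherent families, the twisted algebras become limits of algebras $A^*_{\infty}(G_R)$ and $A^*_{L,\infty}(G_R)$ supported over uniformly bounded pieces, and the evaluation map is an isomorphism because the kernel $A^*_{L,0,\infty}(G_R)$ is killed by a strong Lipschitz homotopy/Eilenberg swindle argument, independently of what the coefficient algebra is. Your conclusion there is correct, but the stated reason is not the mechanism that makes the argument work.
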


Warped cones over actions of hyperbolic groups and box spaces of residually finite hyperbolic groups are typical examples to which our main result appies. A sequence of nested normal subgroups $\{\Gamma_i\}$ of a residually finite group $\Gamma$ is said to be a filtration if the sequence has trivial intersection. The box space of $\Gamma$ associated to a filtration $\{\Gamma_i\}$, denoted by $\Box_{\{\Gamma_i\}}\Gamma$, is the coarse disjoint union $\bigsqcup\Gamma/\Gamma_i$, where each quotient is equipped with the word length metric. Warped cones are metric spaces introduced by J.~Roe \cite{Roe2005} from discrete group actions on compact metric spaces. Typically, let $\Gamma$ be a finite generated group, $M$ a compact Riemannian manifold. The warped cone for $(\Gamma,M)$, denoted by $\mathcal{O}_{\Gamma}(M)$, is constructed from the action $\Gamma$ on $M$ by diffeomorphisms. It is a metric space whose underlying topological space is the open cone $\mathcal{O}M$ on $M$, but whose coarse geometry produces large groups of translations. The second section will have a comprehensive introduction for warped cones.

In \cite{SW2021}, D.~Sawicki and J.~Wu found a relationship between fibred coarse embeddability of warped cones and existence of proper affine isometric actions of discrete groups on Banach spaces. They proved that if $(\Gamma, M)$ is free and admits a linearization by unitary operators in a Hilbert space $\H$ and $\Gamma$ admits a proper affine isometric action on $\H$ (i.e., $\Gamma$ is a-T-menable), then $\O_{\Gamma}(M)$ admits a fibred coarse embedding into Hilbert space. The result still holds if we replace the Hilbert space with an $\ell^p$-space. The same question is also considered by Q.~Wang and Z.~Wang in \cite{WW2017}. They provided a different proof for the case of $\ell^p$ spaces and replace the condition $\Gamma\curvearrowright M$ is free by that $M$ is required to contain a dense and free orbit of $\Gamma$ in their paper.

As corollaries, we have the following results:

\begin{Cor}
Let $\Gamma$ be a finite generated discrete group and let $M$ be a compact manifold. Assume that the action of $\Gamma$ on $M$ is linearisable in an $\ell^p$-space and $\Gamma\curvearrowright M$ is free (or $M$ contains a dense free orbit). If $\Gamma$ admits a proper affine isometric action on an $\ell^p$-space, then the coarse Novikov conjecture holds for the warped cone $\mathcal{O}_{\Gamma}(M)$.
\end{Cor}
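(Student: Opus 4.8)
The plan is to deduce the Corollary directly from Theorem \ref{main result}, once we know that, under the stated hypotheses, the warped cone $\mathcal{O}_\Gamma(M)$ admits a fibred coarse embedding into an $\ell^p$-space. The first step is therefore purely a matter of assembling the known results quoted in the introduction. By the theorem of Sawicki--Wu \cite{SW2021} --- or, in the case where $M$ merely carries a dense free $\Gamma$-orbit, by the theorem of Q.~Wang and Z.~Wang \cite{WW2017} --- whenever the action $\Gamma\curvearrowright M$ is linearisable by isometries of an $\ell^p$-space and $\Gamma$ itself admits a proper affine isometric action on an $\ell^p$-space, the warped cone $\mathcal{O}_\Gamma(M)$ admits a fibred coarse embedding into an $\ell^p$-space. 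Both references are phrased for a Hilbert space, but, as noted above, their arguments use only an affine isometric action, a linearisation by isometries, and the $\ell^p$-sum of the resulting cocycles, and so carry over verbatim to the $\ell^p$ setting.

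The second step is to arrange the bounded geometry hypothesis of Theorem \ref{main result}. The warped space $\mathcal{O}_\Gamma(M)$ is not discrete, but, $M$ being a compact manifold and $\Gamma$ finitely generated, it is coarsely equivalent to a uniformly discrete subspace of bounded geometry: for each integer $r$ one picks a maximal $1$-separated net in $M\times\{r\}$ (with the warped metric) and takes the union over $r$, and the number of points of such a net in a ball of fixed radius is bounded independently of $r$. Since both fibred coarse embeddability into an $\ell^p$-space (Definition \ref{FCE}) and the validity of the coarse Novikov conjecture are invariant under coarse equivalence, we may replace $\mathcal{O}_\Gamma(M)$ by this discretisation without affecting either hypothesis or conclusion.

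With these two points in hand the Corollary is immediate: the discretised warped cone is a metric space with bounded geometry admitting a fibred coarse embedding into an $\ell^p$-space, so Theorem \ref{main result} applies and the coarse assembly map $\mu:\lim_{d\to\infty}K_*(P_d(\mathcal{O}_\Gamma(M)))\to K_*(C^*(\mathcal{O}_\Gamma(M)))$ is injective. I expect the only genuinely delicate point to be the first step --- checking that the fibred coarse embedding produced by \cite{SW2021,WW2017} really does land in a fixed $\ell^p$-space with all of the uniformity constants demanded by Definition \ref{FCE}, rather than in a space depending on the scale or in some ultraproduct --- while the discretisation and the appeal to coarse invariance are routine.
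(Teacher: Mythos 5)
Your proposal is correct and follows essentially the same route as the paper: invoke the Sawicki--Wu and Wang--Wang results (as recalled in the theorem of Section 2.1) to get a fibred coarse embedding of $\mathcal{O}_{\Gamma}(M)$ into $\ell^p(\IN,\IR)$, and then apply Theorem \ref{main theorem}. Your extra remark on discretising the warped cone to meet the bounded geometry hypothesis is a routine point the paper leaves implicit, and your handling of it is fine.
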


Combining the result in \cite{Yu2005,Saw2018,CWW2013}, we also have

\begin{Cor}
Let $\Gamma$ be a finitely generated residually finite hyperbolic group, and $G=\widehat{\Gamma} ((\Gamma_n))$ the corresponding profinite completion. Then the coarse Novikov conjecture hold for $\mathcal{O}_\Gamma (G)$ and any box apace $\Box_{\{\Gamma_n\}}\Gamma$.
\end{Cor}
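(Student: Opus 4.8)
The plan is to deduce both assertions from Theorem~\ref{main result} by exhibiting, for each of the two spaces, bounded geometry together with a fibred coarse embedding into some $\ell^p$-space with $1\le p<\infty$. The only genuinely group-theoretic ingredient is Yu's theorem \cite{Yu2005}, that a hyperbolic group $\Gamma$ admits a proper affine isometric action on an $\ell^p$-space for all sufficiently large $p$; I would fix such a $p$ once and for all and feed it into the warped-cone machinery recalled in the introduction.

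For the warped cone $\O_\Gamma(G)$ over the profinite completion $G=\wh\Gamma((\Gamma_n))=\varprojlim_n\Gamma/\Gamma_n$, I would first observe that $\O_\Gamma(G)$ has bounded geometry, and that, since $\{\Gamma_n\}$ is a filtration (so $\bigcap_n\Gamma_n=\{e\}$), the translation action of $\Gamma$ on $G$ has a dense free orbit, namely the orbit of the neutral element; moreover this action is linearisable in the $\ell^p$-space $\ell^p(G)$ taken with respect to normalised Haar measure, via the left regular representation. With these properties in hand --- free action with a dense orbit, $\ell^p$-linearisability, and a proper affine isometric $\ell^p$-action of $\Gamma$ from \cite{Yu2005} --- I would invoke the Sawicki--Wu/Wang--Wang construction \cite{SW2021,WW2017} to produce a fibred coarse embedding of $\O_\Gamma(G)$ into an $\ell^p$-space, and then apply Theorem~\ref{main result}. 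This is exactly the route underlying the preceding corollary, except that $G$ is a Cantor group rather than a manifold, so one appeals to the construction in its general compact-space form rather than to the manifold statement.

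For the box space $\Box_{\{\Gamma_n\}}\Gamma=\bigsqcup_n\Gamma/\Gamma_n$, which plainly has bounded geometry, I would use \cite{CWW2013}: a residually finite group admitting a proper affine isometric action on an $\ell^p$-space has all of its box spaces fibred coarsely embeddable into an $\ell^p$-space. The mechanism is a quantitative use of residual finiteness --- for each radius $R$ there is $n_0$ so that every $R$-ball of $\Gamma/\Gamma_n$ with $n\ge n_0$ is isometric to an $R$-ball of $\Gamma$ --- which lets one transplant the proper $\ell^p$-cocycle of $\Gamma$ to a compatible field of local maps on the components $\Gamma/\Gamma_n$, i.e.\ precisely the data of a fibred coarse embedding. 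An alternative I would keep in reserve is to realise $\Box_{\{\Gamma_n\}}\Gamma$ coarsely inside $\O_\Gamma(G)$ via \cite{Saw2018} and pull back the embedding constructed above. A final application of Theorem~\ref{main result} then yields the coarse Novikov conjecture for $\Box_{\{\Gamma_n\}}\Gamma$.

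I expect the main obstacle to lie not in any new $K$-theory --- Theorem~\ref{main result} carries all of that --- but in the geometric bookkeeping: checking that $\Gamma\curvearrowright G$ really meets the precise freeness and $\ell^p$-linearisability hypotheses required by \cite{SW2021,WW2017}, and carrying out (or citing sharply) the residual-finiteness patching that upgrades a proper $\ell^p$-cocycle on $\Gamma$ to a fibred coarse embedding of its box space, including the verification of bounded geometry for $\O_\Gamma(G)$. One should also confirm that the value of $p$ supplied by \cite{Yu2005}, which may be forced to be large, is harmless --- and it is, since Theorem~\ref{main result} permits every $p\in[1,\infty)$.
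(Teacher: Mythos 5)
Your proposal is correct and follows essentially the same route as the paper, which obtains this corollary by combining Yu's proper affine isometric $\ell^p$-action for hyperbolic groups \cite{Yu2005} with the fibred coarse embeddability of $\O_\Gamma(G)$ (via the linearisability and freeness of the left translation action, \cite{SW2021,WW2017}) and of the box spaces (\cite{CWW2013,Saw2018}), and then applying Theorem~\ref{main theorem}. The points you flag (bounded geometry, the precise linearisation, and that large $p$ is harmless) are exactly the routine verifications the paper leaves to the cited references.
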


We now give a short explaination about serveral key ingredients in our proof. We briefly recall the proof of the maximal coarse Baum-Connes conjecture for a metric space which admits a fibred coarse embedding into a Hilbert space in \cite{CWY2013}. 
The authors firstly reduced the issue to proving the maximal coarse Baum-Connes conjecture at infinity for the coarse disjoint unions of finite metric spaces. By using a geometric Dirac-dual-Dirac argument, they further reduced the problem to a twisted version of coarse Baum-Connes conjecture at infinity by using a $C^*$-algebra constructed in \cite{HKT1998}. Then the theorem follows by using a cutting and pasting argument of G.~Yu introduced in the Section 6 of \cite{Yu2000}. We try to follow the outline of their proof, however there are servarl problems.

For the first step, we need to use a Mayer-Vietoris argument and Five Lemma to reduce the problem to 'infinity'. First of all, a coarse Mayer-Vietoris argument in \cite{HRY1993} enables us to just take into consideration of the coarse Baum-Connes conjecture for coarse disjoint unions of finite subsets. For the case when $X=\bigsqcup_{n\in\IN}X_n$ where each $X_n$ is finite, one can use the following diagram, see \cite{CWY2013}:
$$\xymatrix{0\ar[d]&0\ar[d]\\\lim\limits_{d\to\infty}K_*(P_d(X_{N_d}))\oplus\bigoplus_{n=N_d}^{\infty}K_*(P_d(X_n))\ar[r]^{\qquad\qquad\qquad\cong}\ar[d]&K_*(\mathcal{K})\ar[d]\\
\lim\limits_{d\to\infty}K_*(P_d(X))\ar[r]^{\mu}\ar[d]&K_*(C^*_{max}(X))\ar[d]^{\Phi_*}\\
\lim\limits_{d\to\infty}\frac{\prod_{n\in\IN}K_*(P_d(X_n))}{\bigoplus_{n\in\IN}K_*(P_d(X_n))}\ar[r]^{\mu_{\infty}}\ar[d]&K_*(C^*_{max,u,\infty}(P_d(X_n)))\ar[d]\\0&0}.$$
The vertical sequence on the right side is exact only when we take the maximal norm and the top horizantal map is an isomorphism. Then it suffices to prove the assembly map at infinity $\mu_{\infty}$ is an isomorphism.

However, when we consider the coarse Novikov conjecture for certain spaces, the coarse Mayer-Vietories argument above does not work anymore as the Five Lemma fails if we only have the injectivity part of the assembly map $\mu$.
The argument holds only for a coarse disjoint union of a sequence of finite metric spaces $X=\bigsqcup_{n\in\IN}X_n$. One can still reduce the coarse Novikov conjecture for $X$ to 'infinity' by using the following diagram, see \cite{RYI2012,RYII2012}.
$$\xymatrix{0\ar[d]&0\ar[d]\\\lim\limits_{d\to\infty}K_*(P_d(X_{N_d}))\oplus\bigoplus_{n=N_d}^{\infty}K_*(P_d(X_n))\ar[r]^{\qquad\qquad\qquad\cong}\ar[d]&K_*(\mathcal{K})\ar[d]^{i_*}\\
\lim\limits_{d\to\infty}K_*(P_d(X))\ar[r]^{\mu}\ar[d]&K_*(C^*(X))\ar[d]^{\Phi_*}\\
\lim\limits_{d\to\infty}\frac{\prod_{n\in\IN}K_*(P_d(X_n))}{\bigoplus_{n\in\IN}K_*(P_d(X_n))}\ar[r]^{\mu_{\infty}}\ar[d]&K_*(C^*_{u,\infty}(P_d(X_n)))\\0&}.$$
Notice that $i_*:K_*(\K)\to K_*(C^*(X))$ is injective only when $X$ is a coarse disjoint union of finite spaces, see \cite{OY2009}. By using a diagram chasing argument, to prove $\mu$ is injective, it suffices to prove $\mu_{\infty}$ is injective.

For the general case (\emph{especially the case when $X$ can not be seen as a coarse disjoint union of finite metric spaces}), in order to make the whole progress work well, we introduce a notion of $K$-homology at infinity for a locally compact Hausdorff space and a notion of Roe algebra at infinity for a proper metric space in Section 4 and construct the following diagram:
$$\xymatrix{\lim\limits_{d\to\infty}K_*(P_d(X))\ar[r]^{\pi_*}\ar[d]_{\mu}&\lim\limits_{d\to\infty}K_*^{\infty}(P_d(X))\ar[d]^{\mu_{\infty}}\\
K_*(C^*(P_d(X)))\ar[r]^{\Phi_*}&K_*(C^*_{\infty}(P_d(X)))}.$$
A metric space with bounded geometry is said to admit an \emph{infinite coarse component} if there exists $R>0$ such that the Rips complex $P_R(X)$ has an unbounded component. It is an opposite statement to the coarse disjoint union (see Proposition \ref{fcc and cdu}). We prove that $\pi_*$ is an injection if $X$ admits an infinite coarse component. Thus we can still reduce the Novikov conjecture for $X$ to proving $\mu_{\infty}$ is injective in this case. Moreover, we show in Section 5 that our construction coincides with the construction in \cite{CWY2013} for the coarse disjoint union case. This means that we can always reduce the coarse Novikov conjecture to 'infinity' by using our construction (see Theorem \ref{CNC at inf}).

For the second step, we need to construct a twisted version of the coarse Baum-Connes conjecture by using an algebra associated with $B=\ell^p(\IN,\IR)$ for $p\in [1,\infty)$. As $B$ is of property (H) introduced by G.~Kasparov and G.~Yu in \cite{KY2012}, we wanted to use the $C^*$-algebra $\Q((\A_n\wox\K)_{n\in\IN})$ introduced in \cite{KY2012,CWY2015} to solve this problem. For any $a\in\Q((\A_n\wox\K)_{n\in\IN})$, we shall need a $C^*$-isomorphism $t^*:\Q((\A_n\wox\K)_{n\in\IN})\to\Q((\A_n\wox\K)_{n\in\IN})$ associated to an affine isometry $t$ on $B$ to translate the support of $a$. However, $\Q((\A_n\wox\K)_{n\in\IN})$ is built by using a dense subspace of $B$ which may not be invarinat under $t$. To solve this problem, we construct a new algebra $\A(B)$ for an $\ell^p$-space $B=\ell^p(\IN,\IR)$ in Section 3. Our construction is inspired from the paper of S.~Gong, J.~Wu and G.~Yu \cite{GWY2018} for the case when $p=2$. We construct a Bott homomorphism $\beta_{x_0}:\S\to\A(B)$ associated with a base point $x_0\in B$ by using the $p/2$-H\"older extension of the Mazur map introduced by E.~Odell and T.~Schlumprecht in \cite{OS1994} and extended by Q.~Cheng in \cite{Cheng2016}. We show that the Bott homomorphism induces an isomorphism on $K$-theory which does not depend on the choice of the base point, i.e., $(\beta_{x_0})_*:K_*(\S)\to K_*(\A(B))$ is an isomorphism for any $x_0\in B$ (see Theorem \ref{K of A(B)}).

Actually, it has been pointed out in G.~Kasparov and G.~Yu's paper \cite{KY2012} that the Bott periodicity theorem holds for $\ell^p$-spaces. Indeed, the Mazur map can be extended to a homeomorphism between $\ell^p(\IN,\IR)$ and $\ell^2(\IN,\IR)$ and the Bott periodicity for $\ell^2(\IN,\IR)$ has been proved in \cite{HKT1998}. However, the constructions in \cite{HKT1998} and \cite{KY2012} also rely on a dense subspace of $\ell^p(\IN,\IR)$ which is sightly different from ours.

This paper is organized as follows.
In Section 2, we briefly recall the Roe algebra and Yu's localization algebra, and states the coarse Novikov conjecture. After that we go over the concept of warped metric and warped cones introduced by John Roe \cite{Roe2005}. 
In Section 3, we introduce an algebra $\A(B)$ associated with $B=\ell^p(\IN,\IR)$ and calculate its $K$-theory. It will play an important role in the following proof.
In Section 4, we introduce $K$-homology at infinity, Roe algebra at infinity and the coarse Novikov conjecture at infinity for a proper metric space.
In Section 5. we compare the case of coarse disjoint union and the case of spaces which admit an infinite coarse component, and reduce both cases to the coarse Novikov conjecture at infinity.
In Section 6, we define twisted algebras at infinity and compute their $K$-theories by using a cutting and pasting technique introduced by G.~Yu in \cite{Yu2000}.
In Section 7, we define the Bott map and complete the proof.

\section{The coarse Novikov conjecture}

First of all, let us recall the definition of the Roe algebra and the coarse Baum-Connes assembly map for a proper metric space (cf. \cite{BCH1994,Roe1993,Roe1996}) in this section.

Assume that $(X,d)$ is a proper metric space. For $\delta>0$, a $\delta$-net of $X$ is a discrete subset of $X_{\delta}\subset X$ such that there exists $r>0$ such that $d(x,x')>r$ for all distinct $x,x'\in X_{\delta}$ and $\bigcup_{x\in X_{\delta}}B(x,\delta)=X$. Moreover, $X$ is said to have bounded geometry if $X$ contains a net $X_{\delta}$ with bounded geometry for some $\delta>0$, i.e., for any $r>0$ there exists $N>0$ such that any ball of radius $r$ in $X_{\delta}$ contains at most $N$ elements.

We denote $C_0(X)$ to be the $C^*$-algebra of all continuous functions on $X$ which vanish at infinity. A separable infinite-dimensional Hilbert space $H_X$ is said to be an $X$-module if $H_X$ is equipped with a non-degenerate $*$-representation $\pi:C_0(X)\to\B(H_X)$. Moreover, $H_X$ is said to be ample if no non-zero element of $C_0(X)$ acts as a compact operator.

\begin{Def}[\cite{Roe1993}]
Let $H_X$ be an ample $X$-module and $T\in\B(H_X)$.
\begin{itemize}
\item[(1)]$T$ is said to be locally compact if $fT$ and $Tf$ are compact operators for any $f\in C_0(X)$;
\item[(2)]The support of $T$, denoted by $\supp(T)$, is defined to be the set of all points $(x,y)\in X\times X$ such that for all $f,g\in C_0(X)$ with $f(x)\ne 0$ and $g(y)\ne0$, we have that $fTg\ne0$;
\item[(3)]The propagation of $T$ is defined to be
$$\prop(T)=\sup\{d(x,y)\mid (x,y)\in\supp(T)\}\in [0,\infty].$$
Moreover, $T$ is said to have finite propagation if $\prop(T)<\infty$.
\item[(4)]The algebraic Roe algebra of $X$, denote by $\IC[X,H_X]$ (or simply $\IC[X]$), is defined to be the $*$-algebra of all finite propagation locally compact operators on $H_X$.
\item[(5)]The Roe algebra of $X$, denoted by $C^*(X,H_X)$ (or simply $C^*(X)$), is the norm closure of $\IC[X]$ in $\B(H_X)$.
\end{itemize}\end{Def}

Note that $C^*(X,H_X)$ does not depend on the choice of $X$-module $H_X$ up to a non-canonical $*$-isomorphism \cite{HRY1993}. Hence, it is convenient to discuss the following specific $X$-module. Let $Z\subseteq X$ be a countable dense subset of $X$ and $H$ be a fixed infinite dimensional Hilbert space. We denote $\K(H)$ to be the set of all compact operators on $H$. We define $H_X=\ell^2(Z)\ox H$ to be an ample $X$-module equipped with the pointwise multiplication representation of $C_0(X)$ on $\ell^2(Z)$. It is easy to check that $H_X$ is ample.

\begin{Def}\label{matrix Roe algebra}
Define $\IC_f[X]$ to be the set of all bounded functions $T:Z\times Z\to\K(H)$ satisfying the following conditions:
\begin{itemize}
\item[(1)]for any bounded subset $B\subset X$, the set
$$\#\{(x,y)\in(B\times B)\cap(X\times X)\mid T(x,y)\ne 0\}<\infty;$$
\item[(2)]there exists $L>0$ such that
$$\#\{y\in Z|T(x,y)\ne0\}<L,\qquad\#\{y\in Z\mid T(y,x)\ne 0\}<L$$
for all $x\in Z$;
\item[(3)]there exists $R>0$ such that $T(x,y)=0$ whenever $d(x,y)>R$ for $x,y\in Z$.
\end{itemize}\end{Def}

Notice that $\IC_f[X]$ can be viewed as a dense $*$-subalgebra of $\IC[X,H_X]$ in $C^*(X)$, where the algebraic operation of $\IC_f[X]$ is given by viewing $T\in \IC_f[X]$ as $Z$-by-$Z$ matrix. In this sequel, we will use $\IC_f[X]$ to replace $\IC[X]$ to define the Roe algebra of $X$.

We next recall the assembly map $\mu$ for the Roe algebras. Let $X$ be a proper metric space. Recall that the $K$-homology groups $K_i(X)=KK_i(C_0(X),\IC)$ $(i=0,1)$ are generated by certain cycles modulo certain equivalence relations

\begin{itemize}
\item[(1)]a cycle for $K_0(X)$ is a pair $(H_X,F)$, where $H_X$ is an $X$-module and $F$ is a bounded linear
operator acting on $H_X$ such that $F^*F-I$ and $FF^*-I$ are locally compact, and $\varphi F - F\varphi$
is compact for all $\varphi\in C_0(X)$;
\item[(2)] a cycle for $K_1(X)$ is a pair $(H_X,F)$, where $H_X$ is an $X$-module and $F$ is a self-adjoint
operator acting on $H_X$ such that $F^2-I$ is locally compact, and $\varphi F- F\varphi$ is compact for all
$\varphi\in C_0(X)$.
\end{itemize}

Let $(H_X,F)$ represent a cycle in $K_0(X)$. For any $R> 0$, one can always take a locally finite, uniformly bounded open cover $\{U_i\}_{i\in I}$ of $X$ such that the diameter of each $U_i$ is no more than $R$. Let $\{\phi_i\}_{i\in I}$ be a continuous partition of unity subordinate to the open cover $\{U_i\}_{i\in I}$. Define
$$F'=\sum_{i}\phi^{\frac 12}_iF\phi^{\frac 12}_i,$$
where the sum converges in the strong operator topology. It is not hard to see that $(H_X,F)$ and $(H_X,F')$ are equivalent via $(H_X,(1-t)F+tF')$, where $t\in[0,1]$. Note that both $F'$ and $F'^2-1$ have finite propagation, so $F'$ is a multiplier of $C^*(X)$ and $F'$ is invertible modulo $C^*(X)$. Hence $F'$ gives rise to an element, denoted by $\partial([F'])$ in $K_0(C^*(X))$, where
$$\partial:K_1(M(C^*(X))/C^*(X))\to K_0(C^*(X))$$
is the boundary map of $K$-theory, and $M(C^*(X))$ is the multiplier algebra of $C^*(X)$. We define the index of $(H_X,F)$ to be $\partial([F'])$. Similarly, we can define the index map from $K_1(X)$ to $K_1(C^*(X))$.

Now we are ready to introduce the coarse Baum–Connes assembly map for a metric space with bounded geometry.

\begin{Def}\label{Rips complex}
Let $X$ be a discrete metric space with bounded geometry. For each $d\geq 0$, the Rips complex of $X$ at scale $d$, denoted by $P_d(X)$, consists as a set of all formal sums
$$z=\sum_{x\in X}t_xx$$
such that each $t_x$ is in $[0,1]$, such that $\sum_{x\in X}t_x=1$, and such that the support of $z$ defined by $supp(z):=\{x\in X\,|\,t_x\ne0\}$ has diameter at most $d$.
\end{Def}

We will next discuss the semi-simplicial metric of the Rips complex of $X$. One can find more details of Rips complex in the section 7.2 of \cite{HIT2020}. We shall first define the spherical metric $d_S$ on $P_d(X)$. On each path connected component of $P_d(X)$, the spherical metric is the maximal metric whose restriction to each simplex $\{\sum_{i=0}^nt_ix_i\mid t_i\geq0,\sum_it_i=1\}$ is the metric obtained by identifying the simplex with $S^n_+$ via the map
$$\sum_{i=0}^nt_ix_i\to\left(\frac{t_0}{\sqrt{\sum_{i=0}^nt_i^2}},\cdots,\frac{t_n}{\sqrt{\sum_{i=0}^nt_i^2}}\right),$$
where $S^n_+=\{(x_0,\cdots,x_n)\in\IR^{n+1}\mid x_i\geq0,\sum_{i=0}^nx_i^2=1\}$ endowed with the standard Riemannian metric on the unit $n$-sphere.

For any $x,y\in P_d(Z)$, a semi-simplicial path $\delta$ between $x$ and $y$ is a finite sequence of points
$$x=x_0,y_0,x_1,y_1,\cdots,x_n,y_n=y$$
where $x_i$ and $y_i$ are in $X$ for each $i=\{1,\cdots,n\}$. The length of $\delta$ is defined to be
$$l(\delta)=\sum_{i=0}^{n}d(x_i,y_i)+\sum_{i=0}^{n-1}d_S(y_i,x_{i+1})$$
We define the semi-spherical metric $d_{P_d}$ on $P_d(X)$ by
$$d_{P_d}(x,y)=\inf\{l(\delta)\mid\text{ $\delta$ is a semi-simplicial path between $x$ and $y$}\}.$$
One can check that $(P_0(X),d_{P_0})$ identifies isometrically with $(X,d)$. Moreover, it has been proved in \cite[Proposition 7.2.11]{HIT2020} that the canonical inclusion $i_d:X\to P_d(X)$ is a coarse equivalence for each $d\geq 0$.

If $d<d'$, then $P_d(X)$ is included in $P_{d'}(X)$ as a subcomplex via a simplicial map. Passing to inductive
limit, we obtain the assembly map
$$\mu:\lim_{d\to\infty}K_*(P_d(X))\to\lim_{d\to\infty}K_*(C^*(P_d(X)))\cong K_*(C^*(X)).$$

\begin{CNcon}
If $X$ is a discrete metric space with bounded geometry, then the coarse assembly map
$$\mu:\lim_{d\to\infty}K_*(P_d(X))\to K_*(C^*(X))$$
is injective.
\end{CNcon}

To state our main theorem, we will also need some preparations of coarse geometry and group actions. Recall that a metric space $(X,d)$ is said to admits a coarse embedding into a Hilbert space if there exists a map $f:X\to\H$ and two non-decreasing unbounded functions $\rho_-,\rho_+:\IR_+\to\IR_+$ such that
$$\rho_-(d_X(x,x'))\leq\|f(x)-f(x')\|_{\H}\leq \rho_+(d_X(x,x')).$$
The concept of fibred coarse embedding is introduced by X.~Chen, Q.~Wang and G.~Yu in \cite{CWY2013} as a generalization of coarse embedding. In this paper, we will only focus on the case when a space admits a fibred coarse embedding into a real $\ell^p$-space as following:

\begin{Def}[\cite{CWY2013}]\label{FCE}
Let $p\geq1$ and let $B$ denote the real Banach space $\ell^p(\IN,\IR)$. A metric space $(X,d)$ is said to admit a fibred coarse embedding into $B$ if there exists
\begin{itemize}
\item a field of Banach space $(B_x)_{x\in X}$ over $X$ such that each $B_x$ is isometric to $B$;
\item a section $s:X\to\bigsqcup_{x\in X}B_x$, i.e. $s(x)\in B_x$ for each $x\in X$;
\item two non-decreasing functions $\rho_-$ and $\rho_+$ from $\IR_+$ to $\IR_+$ with $\lim_{r\to\infty}\rho_{\pm}(r)=\infty$
\end{itemize}
such that, for any $R>0$, there exists a bounded subset $K\subset X$ for which there exists a trivialization
$$t_{x,R}:(B_z)_{z\in B(x,R)}\to B(x,R)\times B$$
for each $x\in X\backslash K$, that is, a map from $(B_z)_{z\in B(x,R)}$ to the constant field $B(x,R)\times B$ such that $t_{x,R}$ restrict to the fibre $V_z$ is an affine-isometry $t_{x,R}(z):V_z\to V$, satisfying the following conditions:
\begin{itemize}
\item [(1)]for any $z_1,z_2\in B(x,R)$,
$$\rho_-(d(z_1,z_2))\leq \|t_{x,R}(z_1)(s(z_1))-t_{x,R}(z_2)(s(z_2))\|_p\leq \rho_+(d(z_1,z_2));$$
\item [(2)]for any $x,y\subset X\backslash K$ with $B(x,R)\cap B(y,R)\ne \emptyset$, there exists an affine-isometry $t_{xy,R}:V\to V$ such that $t_{x,R}(z)\circ t^{-1}_{y,R}(z)=t_{xy,R}$ for all $z\in B(x,R)\cap B(y,R)$.\qed
\end{itemize}\end{Def}

Notice that for $x\in X$ and $R'>R>0$, if there exists two trivilizations $t_{x,R}$ and $t_{x,R'}$ for $B(x,R)$ and $B(x,R')$, respectively. Without loss of generality, we may assume that
$$t_{x,R'}|_{B(x,R)}=t_{x,R}.$$
For the notational convenience, we denote $t_x$ and $t_{xy}$ for all $t_{x,R}$ and $t_{xy,R}$, respectively.

In the rest of this paper, we will prove the following theorem:

\begin{Thm}\label{main theorem}
Let $X$ be a discrete metric space with bounded geometry. If $X$ admits a fibred coarse embedding into an $\ell^p$ space, then the coarse Novikov conjecture holds for $X$.
\end{Thm}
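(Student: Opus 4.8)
\emph{Step 1 (reduction to infinity).} The plan is to follow the Dirac--dual-Dirac philosophy carried out entirely ``at infinity'', with the algebra $\A(B)$ of Section~3 in the role played by the Higson--Kasparov--Trout algebra in the Hilbert case. First, by Proposition~\ref{fcc and cdu}, $X$ is either coarsely equivalent to a coarse disjoint union of finite metric spaces or admits an infinite coarse component. In the first case one chases the exact diagram of \cite{RYI2012,RYII2012}, in which $i_*\colon K_*(\K)\to K_*(C^*(X))$ is injective by \cite{OY2009}; in the second case one chases the diagram relating $\mu$, $\mu_\infty$ and $\pi_*$, once $\pi_*$ is known to be injective for spaces with an infinite coarse component. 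By Theorem~\ref{CNC at inf}, in either case it suffices to prove that the coarse assembly map at infinity $\mu_\infty\colon\lim_{d\to\infty}K_*^{\infty}(P_d(X))\to K_*(C^*_{\infty}(P_d(X)))$ is injective. One notes here that a fibred coarse embedding of $X$ into $B=\ell^p(\IN,\IR)$ descends to the relevant components at infinity, and that the cocycle condition of Definition~\ref{FCE}(2) is needed only off a bounded set $K$, whose contribution is precisely what is killed in the algebras at infinity.

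\emph{Step 2 (twisting by $\A(B)$).} Using the section $s$ and the local trivializations $t_x$, I would build a twisted Roe algebra at infinity $C^*_{\infty}(P_d(X);\A(B))$ and a twisted $K$-homology at infinity out of $\A(B)\wox\K$-valued matrices with finite propagation in $P_d(X)$ and with $B$-support controlled by $s$. The reason for using $\A(B)$ rather than the algebra $\Q((\A_n\wox\K)_{n\in\IN})$ of \cite{KY2012,CWY2015} is that $\A(B)$ admits an action of the whole affine isometry group of $B$; hence the transition isometries $t_{xy}$ act on the matrix entries and the cocycle condition makes the twisted algebra well defined. The Bott homomorphism $\beta_{x_0}\colon\S\to\A(B)$ then induces a Bott map $\beta$ on both $K$-homology at infinity and Roe-algebra $K$-theory, fitting into a commuting square with $\mu_\infty$ and the twisted assembly map at infinity $\mu_\infty^{\A(B)}$.

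\emph{Step 3 (computing the twist, and conclusion).} I would prove $\mu_\infty^{\A(B)}$ is an isomorphism by filtering $C^*_{\infty}(P_d(X);\A(B))$ according to the radius of the allowed $B$-support and running a cutting-and-pasting argument in the spirit of Section~6 of \cite{Yu2000}: the subquotients are essentially $0$-dimensional, and a Mayer--Vietoris induction over the skeleta of $P_d(X)$ reduces their $K$-theory to the base case, namely Bott periodicity for $\A(B)$ (Theorem~\ref{K of A(B)}). Since the Bott map on $K$-homology at infinity is injective --- this follows from the localization-algebra version of the same argument together with the base-point independence in Theorem~\ref{K of A(B)} --- the commuting square forces $\mu_\infty$ to be injective, and together with Step~1 this yields the coarse Novikov conjecture for $X$.

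\emph{Main obstacle.} The genuinely new difficulty is Steps~2--3: in the $\ell^p$-setting the dense-subspace constructions of \cite{HKT1998,KY2012} cannot be used, since those subspaces are not invariant under affine isometries, so $\A(B)$ must be constructed intrinsically and its $K$-theory recomputed for $1\le p<\infty$; lacking an inner product, Bott periodicity becomes delicate and hinges on the $p/2$-H\"older extension of the Mazur map. A secondary subtlety is Step~1 when $X$ is not a coarse disjoint union, which is exactly what motivates introducing $K$-homology and Roe algebras at infinity and establishing the injectivity of $\pi_*$.
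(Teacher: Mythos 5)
Your proposal is correct and follows essentially the same route as the paper: reduction to the coarse Novikov conjecture at infinity (covering both the coarse-disjoint-union case and the infinite-coarse-component case), twisting by $\A(B)$ via the trivializations and transition isometries, proving the twisted evaluation-at-infinity map is an isomorphism by a cutting-and-pasting/Mayer--Vietoris argument anchored in Bott periodicity for $\A(B)$, and concluding injectivity of the untwisted map by the commuting square. The only cosmetic difference is that the paper runs the skeleton induction on the localization (Bott) side and the $(G,r)$-separated decomposition on the twisted Roe side, but this matches your outline in substance.
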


\subsection{Warped Cones and proper affine isometric group actions}

In this subsection, we shall recall some basic notions of warped cones and proper affine group actions (cf. \cite{Roe2005,Saw2018}). These are prominent examples of spaces which admit a fibred coarse embedding into an $\ell^p$-space.

Let $(Y,d_Y)$ be a compact metric space. The open cone of $Y$, denoted by $\O Y$, is the set $Y\times [1,\infty)$ with the metric $d_{\O Y}$ defined by
$$d_{\O Y}((y_1,t_1),(y_2,t_2))=|t_1-t_2|+\min\{t_1,t_2\}\cdot\frac{d_Y(y_1,y_2)}{\diam(Y)}.$$

Let $(X,d_X)$ be a proper metric space, $\Gamma$ a finite generated countable group acting by homeomorphisms on $X$ and $S\subseteq\Gamma$ a finite generating set. The warped metric $\delta_{\Gamma}$ on $X$ is defined to be the greatest metric satisfying that
$$\delta_{\Gamma}(x,x')\leq d_X(x,x')\quad\text{and}\quad \delta_{\Gamma}(sx,x)\leq 1$$
for any $x,x'\in X$ and $s\in S$. The warped metric on $X$ always exists and the coarse structure induced by the warped metric does not depend on the choice of generating set $S$ for $\Gamma$ (see \cite[Proposition 1.7]{Roe2005}). For any $\gamma\in\Gamma$, let $|\gamma|$ denote the word length of $\gamma$ relative to the generating set $S$. Let $x,x'\in X$, the warped distance between from $x$ to $y$ is \cite{Roe2005}
$$\delta_{\Gamma}(x,x')=\inf\left\{\sum_{i=0}^{N-1}d(\gamma_ix_i,x_{i+1})+|\gamma_i|\,\Big|\, x=x_0,x_1,\cdots,x_N=x'\in X, \gamma_i\in\Gamma\right\}.$$

\begin{Def}[\cite{Saw2018}]
Let $(Y,d_Y)$ be a compact metric space and let $\Gamma$ be a finitely generated group acting on $Y$ by homeomorphisms. The \emph{warped cone} of $Y$, denoted by $\O_{\Gamma}(Y)$, is the open cone $\O Y$ with the warped metric, where the warping group action is defined by $\gamma(y,t)=(\gamma y,t)$.
\end{Def}

\begin{Def}[P.~C.~Baayen and J.~De.~Groot \cite{BaayenGroot1968}]
A group action $\Gamma\curvearrowright Y$ is said to admit a \emph{linearization} in Banach space $B$ if and there exists an isometric representation of $\Gamma$ on $B$ and $Y$ admits a bi-Lipschtz equivariant embedding into $B$ 
\end{Def}

One is referred to the section 3.1 in \cite{SW2021} for more informations about linearization.

For a residually finite group, it is well-known that the coarse geometric properties of the box sapces $\Box(\Gamma)$ are closely related to the analytic properties of $\Gamma$. A summary of the relationship can be found in \cite{CWW2013,DGWZ2022}. Actually, similar results also appear in the relationship between the coarse geometric properties $\O_{\Gamma}Y$ and the dynamical and analytic properties of $\Gamma$:

\begin{Thm}Let $\Gamma$ be a finte gnerated group and $(Y,d)$ a compact metric apace. Assume that there is a free group action $\Gamma\curvearrowright Y$.
\begin{enumerate}
\item[(1)]The warped cone $\O_{\Gamma}Y$ has property A if and only if the action $\Gamma\curvearrowright Y$ is amenable.
\item[(2)]Assume moreover the action $\Gamma\curvearrowright Y$ admit a linearization in the real Banach space $\ell^p(\IN,\IR)$ for some $p\geq 1$. If $\Gamma$ admits a proper affine isometric action on $\ell^p(\IN,\IR)$, then $\O_{\Gamma}Y$ admits fibred coarse embedding into $\ell^p(\IN,\IR)$.
\end{enumerate}\end{Thm}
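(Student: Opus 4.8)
Although this theorem mostly records facts available in the literature, the two assertions are of quite different character; I will indicate a route to each, the second being where the real work lies.

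\textbf{Proof of (1).} Here the plan is to assemble known arguments. For the implication ``amenable action $\Rightarrow$ property A'' I would fix $R,\varepsilon>0$, use topological amenability of $\Gamma\curvearrowright Y$ to obtain a weak-$*$ continuous assignment $y\mapsto\mu_y$ of finitely supported probability measures on $\Gamma$ that is $(\varepsilon,R)$-almost equivariant, and then combine these with the standard F\o lner functions along the Euclidean-like radial direction $[1,\infty)$ of the open cone, using the explicit infimum description of $\delta_\Gamma$ recalled above, to build finitely supported functions on $\O_\Gamma Y$ with small variation on warped $R$-balls and uniformly bounded support. For the converse I would use the standing freeness hypothesis: a family of property-A functions on $\O_\Gamma Y$ restricts, along $t\to\infty$ (where a warped ball of fixed radius about $(y,t)$ essentially records only the local $\Gamma$-orbit combinatorics), to an asymptotically invariant net of functions on the orbits, from which a weak-$*$ compactness argument extracts the asymptotically equivariant net of probability measures on $\Gamma$ characterizing amenability of the action. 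In substance both directions are due to Roe \cite{Roe2005} and Sawicki \cite{Saw2018}.

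\textbf{Proof of (2): the construction.} Write $B=\ell^p(\IN,\IR)$. The hypotheses supply two pieces of data. First, a proper affine isometric action $\alpha(\gamma)\xi=\pi(\gamma)\xi+b(\gamma)$ of $\Gamma$ on $B$, with $\pi$ a linear isometric representation and $b\colon\Gamma\to B$ a $1$-cocycle ($b(\gamma\eta)=b(\gamma)+\pi(\gamma)b(\eta)$) satisfying $\|b(\gamma)\|_p\to\infty$ as $|\gamma|\to\infty$; the cocycle identity also forces $\|b(\gamma)\|_p\le C|\gamma|$ with $C=\max_{s\in S}\|b(s)\|_p$. Second, a linearization of $\Gamma\curvearrowright Y$: a linear isometric representation $\sigma$ of $\Gamma$ on $B$ and a $\Gamma$-equivariant bi-Lipschitz embedding $f\colon Y\to B$, $f(\gamma y)=\sigma(\gamma)f(y)$. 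I would take as model fibre $V=\IR\oplus_p B\oplus_p B\cong\ell^p(\IN,\IR)$, let the field $(B_v)_{v\in\O Y}$ have each $B_v$ a copy of $V$, and set the section
$$s(y,t)=\Bigl(t,\ \tfrac{t}{\diam(Y)}\,f(y),\ 0\Bigr)\in B_{(y,t)},$$
the third coordinate being a placeholder that the trivializations adjust by the cocycle. Given $R>0$, I would discard a bounded set $K_R\subset\O Y$ of points with small radial coordinate and, for $v=(y_0,t_0)$ outside $K_R$, observe that each $z\in B(v,R)$ can be written $z=(\gamma_z\cdot y',t')$ with $|\gamma_z|$ bounded in terms of $R$ and $(y',t')$ within bounded \emph{unwarped} cone distance of a point determined by $v$; the trivialization $t_v$ is then defined by applying $\pi(\gamma_z)^{-1}$ and $\sigma(\gamma_z)^{-1}$ in the two $B$-coordinates and subtracting $b(\gamma_z)$ in the third, after normalising the radial coordinate by a fixed affine isometry. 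The cocycle identity together with equivariance of $f$ is exactly what forces the transitions $t_v\circ t_w^{-1}$ on an overlap $B(v,R)\cap B(w,R)$ to be point-independent affine isometries of $V$, which is condition (2) of Definition \ref{FCE}. This follows the templates of Sawicki--Wu \cite{SW2021} and Wang--Wang \cite{WW2017}, the only $\ell^p$-specific input being that $\|\cdot\|_p$ is a norm.

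\textbf{Proof of (2): the estimates, and the main obstacle.} For condition (1) I would, for $z_1,z_2\in B(v,R)$ written as $z_i=(\gamma_i\cdot y_i,t_i)$, read off from the trivialised picture that $t_v(z_1)(s(z_1))-t_v(z_2)(s(z_2))$ decomposes into a radial part $t_1-t_2$; a part comparable, via the bi-Lipschitz bounds on $f$ (and absorbing an $O(|t_1-t_2|)$ error from $\|f\|_\infty<\infty$), to $\min(t_1,t_2)\,d_Y(y_1,y_2)/\diam(Y)$; and a ``group'' part of $\ell^p$-norm equal to $\|b(\gamma_1^{-1}\gamma_2)\|_p$, bounded above by $C\,|\gamma_1^{-1}\gamma_2|$ and below by the properness modulus of $b$ at $|\gamma_1^{-1}\gamma_2|$. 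Matching these three contributions against the defining infimum for $\delta_\Gamma$ yields non-decreasing unbounded $\rho_-,\rho_+$ with
$$\rho_-(\delta_\Gamma(z_1,z_2))\le\bigl\|t_v(z_1)(s(z_1))-t_v(z_2)(s(z_2))\bigr\|_p\le\rho_+(\delta_\Gamma(z_1,z_2)),$$
which completes the verification. The main obstacle is precisely this comparison: $\delta_\Gamma$ is an infimum over mixed chains of group jumps and cone moves, so one must argue that on a warped $R$-ball a near-optimal chain can be taken of uniformly bounded combinatorial length, control the interaction of the scaling factor $\min(t_1,t_2)$ with the group-jump terms, and check that the exceptional bounded set $K_R$ is chosen uniformly in $v$. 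This bookkeeping, rather than any single conceptual point, is the technical heart of the proof.
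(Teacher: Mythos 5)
The paper itself contains no proof of this theorem: part (1) is quoted by combining Roe's Theorem 3.4 with Sawicki--Wu's Theorem 4.2, and part (2) is quoted from Sawicki--Wu's Theorem 3.2 (with Wang--Wang's Theorem 3.2 for $p\ne 2$), the only added remark being that the $p=2$ argument goes through verbatim for all $p\ge 1$. Your outline reconstructs exactly the arguments of those cited references --- amenability versus property A à la Roe/Sawicki for (1), and the cocycle-plus-linearization construction of the section and trivializations à la Sawicki--Wu/Wang--Wang for (2) --- so it is essentially the same approach as the paper's; the one step you explicitly leave unexecuted (matching the three contributions against the infimum defining $\delta_\Gamma$, which rests on freeness plus compactness giving a uniform lower displacement bound for the finitely many nontrivial $\gamma$ of bounded word length, hence a well-defined $\gamma_z$ at large radial parameter) is precisely the content of the theorems the paper cites.
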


The first term of the theorem above is proved by combining \cite[Theorem 3.4]{Roe2005} and \cite[Theorem 4.2]{SW2021}. The second term is proved in \cite[Theorem 3.2]{SW2021} for the case of $p=2$. Actually their prove also holds for any $p\geq 1$. One can also find another proof in \cite[Theorem 3.2]{WW2017} for the case of $p\ne 2$.

In the last of this section, we introduce a typical example constructed by profinite completion:

\begin{Exa}
Let $\Gamma$ be a residually finite group and $\{\Gamma_n\}_{n=1}^{\infty}$ a decreasing sequence of finite index normal subgroups with $\bigcap_{n=1}^{\infty}\Gamma_n=\{e\}$. We denote the quotient group by $G_n=\Gamma/\Gamma_n$. The identity map of $\Gamma$ induces an group homomorphism $G_n\to G_{n-1}$ for each $n\in\IN$. Then we have the following sequence:
$$\cdots\to G_n\to\cdots\to G_2\to G_1\to 1.$$
We denote $\wh{\Gamma}((\Gamma_n))$ the inverse limit of this sequence and call it the profinite completion of $\Gamma$ with respect to $(\Gamma_n)$. It is well-known that $\wh\Gamma((\Gamma_n))$ is a compact metrizable group containing $\Gamma$ as a dense subgroup and $\Gamma$ acts freely on $\wh\Gamma((\Gamma_n))$ by left multiplication and admit a linearization in $\ell^p(\IN,\IR)$ for any $1\leq p<\infty$ (\cite[Lemma 3.19]{SW2021}).

If $\Gamma$ admits a proper affine isometric action on $\ell^p(\IN,\IR)$ for some $p\geq 1$, then the warped cone $\O_{\Gamma}(\wh\Gamma((\Gamma_n)))$ admits a fibred coarse embedding into $\ell^p(\IN,\IR)$.
\end{Exa}

Combining with Theorem \ref{main theorem}, we have the following corollaries:

\begin{Cor}
Let $\Gamma$ be a countable discrete group, $Y$ be a compact metric space with a free $\Gamma$-action. If this action admits a linearization in the $\ell^p(\IN,\IR)$ and $\Gamma$ admits a proper affine isometric action on $\ell^p(\IN,\IR)$ for some $p\geq 1$, then the coarse Novikov conjecture holds for $\O_{\Gamma}Y$.
\end{Cor}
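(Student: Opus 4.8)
The plan is to combine two results already recorded in this section. On the one hand, the warped cone theorem above (part~(2), proved in \cite[Theorem~3.2]{SW2021} for $p=2$ and, as remarked there, valid for all $p\geq1$; see also \cite[Theorem~3.2]{WW2017}) shows that, under the stated hypotheses, $\O_{\Gamma}Y$ admits a fibred coarse embedding into $\ell^p(\IN,\IR)$. On the other hand, Theorem~\ref{main theorem} says that any metric space with bounded geometry admitting such a fibred coarse embedding satisfies the coarse Novikov conjecture. Hence the corollary follows once one knows that $\O_{\Gamma}Y$ has bounded geometry, and the substance of the argument is this verification together with the bookkeeping needed to move between $\O_{\Gamma}Y$ and a discrete net inside it.

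In more detail: first I would invoke the warped cone theorem. The group $\Gamma$ is finitely generated (as is implicit in the definition of $\O_{\Gamma}Y$), the action $\Gamma\curvearrowright Y$ is free, it admits a linearization in $\ell^p(\IN,\IR)$, and $\Gamma$ admits a proper affine isometric action on $\ell^p(\IN,\IR)$, so $\O_{\Gamma}Y$ admits a fibred coarse embedding into $\ell^p(\IN,\IR)$. Second, I would check that $\O_{\Gamma}Y$ has bounded geometry: a linearization is in particular a bi-Lipschitz $\Gamma$-equivariant embedding of the compact space $Y$ into a Banach space on which $\Gamma$ acts by isometries, so the fixed finite generating set of $\Gamma$ acts on $Y$ by homeomorphisms with uniformly bounded bi-Lipschitz constants; together with compactness of $Y$, this is precisely the standard hypothesis (cf.\ \cite{Roe2005,Saw2018}) guaranteeing that any $\delta$-net $X_{\delta}\subset\O_{\Gamma}Y$ is a discrete metric space with bounded geometry. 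Such a net is coarsely equivalent to $\O_{\Gamma}Y$, and therefore it also admits a fibred coarse embedding into $\ell^p(\IN,\IR)$, since fibred coarse embeddability is a coarse invariant \cite{CWY2013}. Applying Theorem~\ref{main theorem} to $X_{\delta}$ gives the coarse Novikov conjecture for $X_{\delta}$, and since the coarse Novikov conjecture is invariant under coarse equivalence (the groups $\lim_{d\to\infty}K_*(P_d(X))$ and $K_*(C^*(X))$ and the assembly map $\mu$ all being functorial for coarse equivalences), the conjecture holds for $\O_{\Gamma}Y$.

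The logical skeleton is thus essentially ``apply the warped cone theorem, then apply Theorem~\ref{main theorem}'', and I expect the only genuine point requiring care to be the bounded-geometry step, together with the related mismatch that Theorem~\ref{main theorem} is phrased for discrete metric spaces while $\O_{\Gamma}Y$ is not discrete. Both are handled by the net argument above, but one must confirm that the linearization hypothesis really does supply the equi-Lipschitz control on generators --- and hence, at a controlled rate, on all elements of $\Gamma$ --- that bounded geometry of a warped cone requires; in the setting of the preceding corollary, where $Y$ is a compact manifold carrying a smooth action, this control comes for free, and in the present generality it is exactly what the linearization assumption is there to furnish.
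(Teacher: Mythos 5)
Your proposal matches the paper's (implicit) argument exactly: the corollary is obtained by combining part (2) of the warped cone theorem stated in this subsection (fibred coarse embeddability of $\O_{\Gamma}Y$ into $\ell^p(\IN,\IR)$, via \cite{SW2021,WW2017}) with Theorem \ref{main theorem}. The extra care you take over bounded geometry and passing to a discrete net is a reasonable supplement to what the paper leaves unstated, but it does not change the route.
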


G.~Yu proved hyperbolic groups always admits a proper affine isometric action on $\ell^p(\IN,\IR)$ for a sufficiently large $p\geq 1$ in \cite[Theorem 1.1]{Yu2005}. In particular, we have the following:

\begin{Cor}
Let $\Gamma$ be a finitely generated residually finite hyperbolic group, and $G=\widehat{\Gamma} ((\Gamma_n))$ the corresponding profinite completion. Then the coarse Novikov conjecture holds for $\mathcal{O}_\Gamma (G)$.
\end{Cor}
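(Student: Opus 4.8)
The plan is to assemble this corollary from our main theorem together with two standard structural facts, one about hyperbolic groups and one about profinite completions. First I would invoke G.~Yu's theorem \cite[Theorem 1.1]{Yu2005}: every hyperbolic group $\Gamma$ admits a proper affine isometric action on $\ell^p(\IN,\IR)$ once $p$ is taken sufficiently large, so fix such a $p\in[2,\infty)$. Second, recall from \cite[Lemma 3.19]{SW2021} that $G=\widehat{\Gamma}((\Gamma_n))$ is a compact metrizable group containing $\Gamma$ as a dense subgroup, that $\Gamma$ acts on $G$ freely by left translation, and that this action admits a linearization in $\ell^p(\IN,\IR)$ for every $1\leq p<\infty$, in particular for the $p$ just fixed.

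With these two inputs the hypotheses of the preceding Corollary (on warped cones $\O_\Gamma Y$ with a free action) are satisfied by taking $Y=G$: the action $\Gamma\curvearrowright G$ is free and linearisable in $\ell^p(\IN,\IR)$, and $\Gamma$ acts properly by affine isometries on $\ell^p(\IN,\IR)$. Hence the coarse Novikov conjecture holds for $\O_\Gamma(G)$. Equivalently, one may run the argument directly: by the Example above these same hypotheses force $\O_\Gamma(G)$ to admit a fibred coarse embedding into $\ell^p(\IN,\IR)$, and then Theorem \ref{main theorem} applies after replacing $\O_\Gamma(G)$ by a $\delta$-net, which is a discrete metric space coarsely equivalent to $\O_\Gamma(G)$; since the coarse Novikov conjecture is a coarse invariant it transfers back to $\O_\Gamma(G)$ itself.

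I do not expect a serious obstacle here: the corollary is essentially a bookkeeping step combining Yu's $\ell^p$-embeddability result for hyperbolic groups, the linearization of the profinite completion action, and Theorem \ref{main theorem}. The only genuinely non-formal point to verify is that a net in $\O_\Gamma(G)$ has bounded geometry, so that Theorem \ref{main theorem} is applicable; this follows from the compactness of $G$ together with the fact that $\Gamma$ warps $G$ by isometries of a bi-invariant metric, so that the cone construction and the warping both preserve bounded geometry. Once this is in place the conclusion is immediate.
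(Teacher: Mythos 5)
Your proposal is correct and follows essentially the same route the paper intends: combine Yu's theorem \cite{Yu2005} giving a proper affine isometric action of the hyperbolic group $\Gamma$ on $\ell^p(\IN,\IR)$ for large $p$, the freeness and linearizability of the left-translation action on $G=\widehat{\Gamma}((\Gamma_n))$ from \cite[Lemma 3.19]{SW2021}, the Example showing that $\O_{\Gamma}(G)$ then admits a fibred coarse embedding into $\ell^p(\IN,\IR)$, and finally Theorem \ref{main theorem} (equivalently the preceding corollary on warped cones). The passage to a bounded-geometry net of $\O_{\Gamma}(G)$ is left implicit in the paper just as it is essentially treated in your write-up, so there is no substantive divergence.
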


\section{A Bott periodicity theorem for $\ell^p$ spaces}

In this section, we will introduce a $C^*$-algebra $\A(B)$ associated to a given real Banach space $B=\ell^p(\IN,\IR)$ and calculate its $K$-theory.

In \cite{KY2012}, G.~Kasparov and G.~Yu introduced a $C^*$-algebra for Banach space with Property (H). They pointed out that its $K$-theory can be calculated if the base space is an $\ell^p$-space. We provide a detailed proof in this section. We would like to mention that our construction is slightly different from theirs. The $C^*$-algebra $\A(X)$ in \cite{KY2012} is generated by some certain functions on a dense subspace $V$ of $B$. To build the twisted algebras in Section 6, for any affine isometry in Definition \ref{FCE}, we shall need a $C^*$-isomorphism associated to the affine isometry to translate the support of a function in $\A(X)$. However, the affine isometries in Definition \ref{FCE} may not preserve the dense subspace in \cite{KY2012}. Our construction $\A(B)$ will circumvent this problem.

\subsection{A Clifford algebras for a Hilbert space}

In this subsection, we will recall the definition of Clifford algebras for Hilbert spaces.

Let $\H$ be an infinite dimensional separable real Hilbert space. For each $n\geq 0$, denote
$$\H^{\odot n}_{\IC}=\IC\odot\underbrace{\H\odot\H\odot\cdots\odot\H}_{\text{n times}},$$
where $\odot$ means the algebraic tensor product over $\IR$ and $\H^{\otimes 0}_{\IC}=\IC$. Then $\H^{\odot n}_{\IC}$ is equipped with a natural structure of complex vector space. We define the complex inner product on $\H_{\IC}^{\odot n}$ by the formula:
$$\langle z_1\otimes v_1\otimes\cdots\otimes v_n,z_2\otimes w_1\otimes\cdots\otimes w_n\rangle=z_1\cdot\overline{z_2}\cdot\langle v_1,w_1\rangle\cdot\hdots\cdot\langle v_n,w_n\rangle,$$
and denote $\H_{\IC}^{\otimes n}$ the completion of $\H^{\odot n}_{\IC}$ under the inner product about. We denote $1\otimes v_1\otimes\cdots\otimes v_n$ simply by $v_1\otimes\cdots\otimes v_n$.

Denote $K^n$ the closed subspace of $\H^{\otimes n}_{\IC}$ which is spanned by
$$\left\{v_1\otimes\cdots\otimes v_k-\text{sgn}(\sigma)v_{\sigma(1)}\otimes\cdots\otimes v_{\sigma(n)}\,\big|\, v_1,\cdots,v_n\in\H,\text{$\sigma$ is a $k$-permutation}\right\}.$$
Define the $k$-th complex exterior power of $\H$ to be the quotient Hilbert space
$$\wedge^n\H_{\IC}=\H^{\otimes n}_{\IC}/K^n,$$
where the equivalence class in $\wedge^n\H_{\IC}$ is always denoted by $v_1\wedge\cdots\wedge v_n$. Let $\{e_1,\cdots,e_n,\cdots\}$ be an orthonormal basis of $\H$. Then the set $\{e_{i_1}\wedge e_{i_2}\wedge\cdots\wedge e_{i_n}\mid i_1<i_2<\cdots<i_n\}$ gives an orthonormal basis of $\wedge^k\H_{\IC}$. Define the antisymmetric Fock space:
$$\wedge^*\H_{\IC}=\bigoplus_{k\in\IN}\wedge^k\H_{\IC}.$$
For any $v\in\H$, we define the creation operator $C(v)$ by
$$C(v)(v_1\wedge\cdots\wedge v_k)=v\wedge v_1\wedge\cdots\wedge v_k.$$
It is not hard to check that the adjoint operator defined by
$$C^*(v)(v_1\wedge\cdots\wedge v_k)=\sum_{i=1}^k(-1)^{i+1}\langle v,v_i\rangle v_1\wedge\cdots\wedge\wh{v_i}\wedge\cdots\wedge v_k.$$
We define a self-adjoint operator
$$\hat v:=C(v)+C^*(v)\in\B(\wedge^*\H_{\IC})$$
for each $v\in\H$, then we have that
$$\hat w\hat v+\hat v\hat w=2\langle w,v\rangle.$$
The complex Cliiford algebra $\Cl(\H)$ to be subalgebra of $\B(\wedge^*\H_{\IC})$ generated by $\{\hat v\mid v\in\H\}$.

If $V$ is a linear subspace of $\H$, then $\wedge^*V_{\IC}$ is clearly a subspace of $\wedge^*\H_{\IC}$. Thus $\Cl(V)$ can be naturally viewed as a subalgebra of $\Cl(\H)$.

\begin{Pro}
Let $\{V_n\}_{n\in\IN}$ be an increasing sequence of finite-dimensional Hilbert space such that $\bigcup_{n\in\IN}V_n$ is dense in $\H$. Then
$$\lim_{n\to\infty}\Cl(V_n)\cong\Cl(\H).$$
\end{Pro}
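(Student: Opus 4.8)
The plan is to realize $\Cl(\H)$ as the norm closure of the increasing union $\bigcup_{n\in\IN}\Cl(V_n)$ inside $\B(\wedge^*\H_{\IC})$, so that the inductive limit (in the category of $C^*$-algebras) is identified with this closure. First I would observe that the hypotheses give $V_n\subseteq V_{n+1}$, hence $\Cl(V_n)\subseteq\Cl(V_{n+1})$ as subalgebras of $\B(\wedge^*\H_{\IC})$; the connecting maps in the inductive system are the inclusions, which are isometric $*$-homomorphisms, so $\lim_{n\to\infty}\Cl(V_n)$ is canonically $*$-isomorphic to the $C^*$-subalgebra $A:=\overline{\bigcup_{n\in\IN}\Cl(V_n)}^{\,\|\cdot\|}$ of $\B(\wedge^*\H_{\IC})$. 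It then remains to show $A=\Cl(\H)$.

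The inclusion $A\subseteq\Cl(\H)$ is immediate, since each $\Cl(V_n)\subseteq\Cl(\H)$ and $\Cl(\H)$ is norm closed. For the reverse inclusion it suffices to show that every generator $\hat v$ with $v\in\H$ lies in $A$, because $\Cl(\H)$ is by definition the (closed) algebra generated by such elements and $A$ is a closed subalgebra. Given $v\in\H$ and $\varepsilon>0$, density of $\bigcup_n V_n$ provides $n$ and $w\in V_n$ with $\|v-w\|<\varepsilon$; then $\hat w\in\Cl(V_n)\subseteq A$, and the key estimate $\|\hat v-\hat w\|=\|\widehat{v-w}\|\le\|C(v-w)\|+\|C^*(v-w)\|\le 2\|v-w\|<2\varepsilon$ (using that $\hat{\cdot}$ is linear and that the creation operator satisfies $\|C(u)\|=\|u\|$, which follows from $\|C(u)\xi\|^2\le\|u\|^2\|\xi\|^2$ on the Fock space, or more crudely from $\|\hat u\|^2=\|\hat u^2\|=\|u\|^2$ via the Clifford relation $\hat u^2=\langle u,u\rangle$). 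Hence $\hat v\in A$, giving $\Cl(\H)\subseteq A$ and therefore equality.

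There is essentially no serious obstacle here; the only point requiring a little care is the bookkeeping around what "$\lim_{n\to\infty}\Cl(V_n)$" means, namely confirming that the $C^*$-inductive limit of a system with injective isometric connecting maps is the closure of the union, and that the ambient representation on $\wedge^*\H_{\IC}$ is compatible with this. The estimate $\|\hat v-\hat w\|\le 2\|v-w\|$ is the one computational ingredient, and it is routine. I would present the argument in the two-inclusion form above, isolating the generator estimate as the crux.
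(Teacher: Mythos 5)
Your argument is correct and is essentially the paper's own proof: identify the inductive limit with the closure of $\bigcup_n\Cl(V_n)$ inside $\Cl(\H)$ and approximate generators $\hat v$ using density of $\bigcup_n V_n$ together with the norm control on $v\mapsto\hat v$ (the paper uses the exact equality $\|\hat v\|=\|v\|$ from the Clifford relation, which you also note). Nothing further is needed.
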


\begin{proof}
For each $n\in\IN$, we can view $\Cl(V_n)$ as a subalgebra of $\Cl(\H)$. It suffices to prove
$$\overline{\bigcup_{n\in\IN}\Cl(V_n)}=\Cl(\H),$$
which follows directly from the fact that $\|\hat v\|_{\Cl(\H)}=\|v\|_{\H}$ and $\overline{\bigcup_{n\in\IN}V_n}=\H$.
\end{proof}

\begin{Rem}
Actually, there is another way to construct the Clifford algebra for $\H$. Let $\mathcal T(\H)=\oplus_{n=0}^{\infty}\H^{\odot n}_{\IC}$ be tha complex algebraic tensor algebra of $\H$. Let $I$ be the ideal of $\mathcal T(\H)$ which is generated by $\left\{v\otimes v-\|v\|_{\H}^2\cdot 1\mid v\in\H\right\}$. Denote $\IC l(\H)$ the quotient algebra $\mathcal T(\H)/I$. As a linear space, $\mathcal T(\H)$ is also a dense subspace of the Hilbert space $\oplus_{n=0}^{\infty}\H^{\otimes n}_{\IC}$. Denote by $H(\IC l(\H))$ the completion of $\IC l(\H)$ under the inner product induced by $\oplus_{n=0}^{\infty}\H^{\otimes n}_{\IC}$. Then $\IC l(\H)$ has a canonical faithful representation by the left multiplication on $H(\IC l(\H))$. Then the Clifford algebra
$$\Cl(\H)=\overline{\IC l(\H)^{\|\cdot\|}}$$
where the norm $\|\cdot\|$ is given by the canonical representation on $H(\IC l(\H))$.
\end{Rem}

\subsection{A $C^*$-algebra associated with a real $\ell^p$ space}

Fix $1\leq p<\infty$ and denote $B=\ell^p(\IN,\IR)$ to be the real Banach space. Specially, we denote $\H=\ell^2(\IN,\IR)$ the real Hilbert space. We define the sign function $\text{sgn}:\IR\to\IR$ by the formula
$$\text{sgn}(a)=\left\{\begin{aligned}&0&&,a=0;\\&a|a|^{-1}&&,a\ne0.\end{aligned}\right.$$
Denote by $S(B)$ the closed unit sphere of $B$. Recall the Mazur map $\psi:S(B)\to S(\H)$, defined by
$$\psi(a_1,\cdots,a_n,\cdots)=(sgn(a_1)|a_1|^{p/2},\cdots,\text{sgn}(a_n)|a_n|^{p/2},\cdots),$$
is a uniform homeomorphism, i.e., the $r$-oscillation of $\psi$ and $\psi^{-1}$ tend to $0$ as $r$ tends to $0$, where the $r$-oscillation of $\psi$ is defined to be
$$\omega_r(\psi)=\sup_{x,x'\in S(B),\|x-x'\|_p\leq r}\|\psi(x)-\psi(x')\|.$$
We consider the \emph{$p/2$-H\"older extension} of the Mazur map $\psi$, which is a map $\phi:B\to\H$ defined by 
$$\phi(x)=\left\{\begin{aligned}&0&&,x=0;\\&\|x\|^{p/2}_p\psi\left(\|x\|_p^{-1}x\right)&&,x\ne0.\end{aligned}\right.$$
Actually, we have the following uniform homeomorphism extension theorem due to E.~Odell and T.~Schlumprecht \cite[Proposition 2.9]{OS1994} and Q.~Cheng \cite[Theorem 2.4]{Cheng2016}:

\begin{Pro}[\cite{OS1994,Cheng2016}]\label{extend Mazur uniform homeo}
For any $R>0$, the extended Mazur map restricted on the ball $\phi|_{Ball_B(0,R)}:Ball_B(0,R)\to Ball_{\H}(0,R^{p/2})$ is a uniform homeomorphism, i.e.,
$$\omega_r\left(\phi|_{Ball_B(0,R)}\right)\to 0\text{ as }r\to 0.$$
\end{Pro}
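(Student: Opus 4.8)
The plan is to deduce the statement from the known fact that the Mazur map $\psi : S(B) \to S(\H)$ is a uniform homeomorphism, by a scaling/polar-coordinates argument together with a careful estimate on the radial part. First I would record the homogeneity properties built into the definition of $\phi$: for $x \neq 0$ one has $\phi(x) = \|x\|_p^{p/2}\,\psi(x/\|x\|_p)$, so $\|\phi(x)\|_2 = \|x\|_p^{p/2}$, and $\phi$ commutes with positive scalar dilations in the twisted sense $\phi(\lambda x) = \lambda^{p/2}\phi(x)$ for $\lambda > 0$. In particular $\phi$ restricted to $Ball_B(0,R)$ indeed lands in $Ball_{\H}(0,R^{p/2})$, and its inverse is the analogously twisted radial extension $\phi^{-1}$ of $\psi^{-1}$; so it suffices to prove the oscillation estimate for $\phi$ on $Ball_B(0,R)$, the estimate for $\phi^{-1}$ on $Ball_{\H}(0,R^{p/2})$ being identical after interchanging the roles of $B$ and $\H$ and replacing $p/2$ by $2/p$.

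Next I would decompose the distance $\|\phi(x) - \phi(x')\|_2$ for $x, x' \in Ball_B(0,R)$, $\|x-x'\|_p \leq r$, into an "angular" contribution and a "radial" contribution. Writing $a = \|x\|_p$, $a' = \|x'\|_p$ (so $|a - a'| \leq r$, and both $\leq R$), and $u = x/a$, $u' = x'/a'$ on the unit sphere of $B$, one inserts an intermediate vector, e.g. $a^{p/2}\psi(u')$, and uses the triangle inequality:
\begin{equation*}
\|\phi(x) - \phi(x')\|_2 \leq a^{p/2}\,\|\psi(u) - \psi(u')\| + \bigl|a^{p/2} - (a')^{p/2}\bigr|\,\|\psi(u')\|.
\end{equation*}
The second term is controlled by $|a^{p/2} - (a')^{p/2}|$, which tends to $0$ with $r$ uniformly on $[0,R]$ since $t \mapsto t^{p/2}$ is uniformly continuous on $[0,R]$; this handles the radial part. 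For the first term one needs that $u$ and $u'$ are close on $S(B)$: a standard computation shows $\|u - u'\|_p \leq \frac{2}{\min(a,a')}\|x - x'\|_p$, which is problematic only when $\min(a,a')$ is small. So I would split into two cases. If $\min(a,a') \geq \varepsilon$ for a threshold $\varepsilon$ to be chosen, then $\|u - u'\|_p \leq 2r/\varepsilon$, and the first term is at most $R^{p/2}\,\omega_{2r/\varepsilon}(\psi) \to 0$. If instead $\min(a,a') < \varepsilon$, then since $|a - a'| \leq r$ both $a^{p/2}$ and $(a')^{p/2}$ are at most $(\varepsilon + r)^{p/2}$, so $\|\phi(x)\|_2, \|\phi(x')\|_2 \leq (\varepsilon+r)^{p/2}$ and the crude bound $\|\phi(x) - \phi(x')\|_2 \leq 2(\varepsilon + r)^{p/2}$ applies. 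Given $\eta > 0$, one first picks $\varepsilon$ so that $2(2\varepsilon)^{p/2} < \eta/2$ and then picks $r_0$ small enough that for $r < r_0$ all of $R^{p/2}\omega_{2r/\varepsilon}(\psi)$, the radial term, and $2(\varepsilon+r)^{p/2}$ are below the appropriate fractions of $\eta$; this gives $\omega_r(\phi|_{Ball_B(0,R)}) \to 0$.

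The main obstacle is the behaviour near the origin, where the radial normalization $x \mapsto x/\|x\|_p$ becomes singular and two points can be $B$-close while their normalizations are far apart on the sphere: that is exactly why the uniform-homeomorphism property of $\psi$ alone does not immediately transfer. The fix above — absorbing the bad region into the crude estimate $\|\phi(x)\|_2 = \|x\|_p^{p/2}$, which is small precisely there — is the crux, and it is also the place where the exponent $p/2 > 0$ (hence $t^{p/2} \to 0$ as $t \to 0$) is used in an essential way. Everything else is the uniform continuity of $t \mapsto t^{p/2}$ on a bounded interval and the cited oscillation bounds for the Mazur map on the sphere, so I would not spell those out in detail. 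The same argument with $p/2$ replaced by $2/p$ yields the estimate for $\phi^{-1}$, completing the proof that $\phi|_{Ball_B(0,R)}$ is a uniform homeomorphism onto $Ball_{\H}(0,R^{p/2})$.
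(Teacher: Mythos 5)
The paper does not actually prove this proposition: it is imported directly from \cite[Proposition 2.9]{OS1994} and \cite[Theorem 2.4]{Cheng2016}, so there is no internal argument to compare with, and your proposal should be judged as a self-contained proof. As such it is correct and complete. The polar decomposition $\phi(x)=\|x\|_p^{p/2}\,\psi(x/\|x\|_p)$, the triangle-inequality split into an angular term $a^{p/2}\|\psi(u)-\psi(u')\|$ and a radial term $|a^{p/2}-(a')^{p/2}|$, the normalization estimate $\|u-u'\|_p\le \frac{2}{\min(a,a')}\|x-x'\|_p$, and the two-case analysis according to whether $\min(\|x\|_p,\|x'\|_p)$ exceeds a threshold $\varepsilon$ all check out; the crude bound $\|\phi(x)-\phi(x')\|_2\le \|x\|_p^{p/2}+\|x'\|_p^{p/2}\le 2(\varepsilon+r)^{p/2}$ correctly absorbs the region near the origin where the normalization is singular, and it also covers the degenerate case $x=0$ or $x'=0$ since no normalization is used there. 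The only external inputs — that $\omega_r(\psi)\to 0$ and $\omega_r(\psi^{-1})\to 0$ as $r\to 0$, and uniform continuity of $t\mapsto t^{p/2}$ and $t\mapsto t^{2/p}$ on bounded intervals — are exactly the facts the paper recalls just before the statement, so the reduction is legitimate, and the symmetric argument for $\phi^{-1}$ (exponent $2/p$, sphere map $\psi^{-1}$) gives the inverse direction. This scaling argument is essentially the standard proof of the extension theorem in the special case of the gauge $t\mapsto t^{p/2}$, so it buys the reader a proof where the paper has only a citation. One small observation: for $p\le 2$ one can shortcut via the coordinatewise scalar estimate $|\mathrm{sgn}(s)|s|^{p/2}-\mathrm{sgn}(t)|t|^{p/2}|\le C|s-t|^{p/2}$, which yields a global H\"older bound $\|\phi(x)-\phi(x')\|_2\le C\|x-x'\|_p^{p/2}$ on all of $B$; for $p>2$ no such global estimate holds (consider $x=(R,0,\dots)$, $x'=(R+r,0,\dots)$ with $R\to\infty$), so the restriction to a bounded ball and your case split are genuinely needed there — your proof handles all $p\in[1,\infty)$ uniformly.
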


As a corollary, $\phi$ forms a homeomorphism between $B$ and $\H$. When $p=2$, the extended Mazur map is the identity map on $\H$.

Denote by $\IR_+=[0,\infty)$. Let $C(B\times\IR_+,\Cl(\H\oplus\IR))$ be the algebra of all continuous functions from $B\times\IR_+$ to $\Cl(\H\oplus\IR)$. We view $\Cl(\H)$ as a subalgebra of $\Cl(\H\oplus\IR)$ in the following paper.

\begin{Def}
Define $C_{b,0}(B\times\IR_+,\Cl(\H\oplus\IR))$ to be the subalgebra of $C(B\times\IR_+,\Cl(\H\oplus\IR))$ consisting of all bounded functions $f$ such that
$$f(x,0)\in\Cl(\H)\subseteq\Cl(\H\oplus\IR)$$
for all $x\in B$.
\end{Def}

\begin{Def}
For any $x_0\in B$, we define the Clifford operator $C_{x_0}\in C(B\times\IR_+,\Cl(\H\oplus\IR))$ by
$$C_{x_0}(x,t)=(\phi(x-x_0),t)\in\H\oplus\IR\subseteq\Cl(\H\oplus\IR)$$
for any $x\in B$ and $t\in\IR_+$.
\end{Def}

We denote $C_0(\IR)_{ev}$ (or $C_0(\IR)_{odd}$, respectively) the subset of $C_0(\IR)$ of all even (or odd, respectively) functions. For any Hilbert space $H$, we next define the functional calculus of $\Cl(H)$: for any $f\in C_0(\IR)_{ev}$ and $v\in\H\subset\Cl(H)$, we define $f(v)=f(\|v\|)\in\IC\subset\Cl(H)$ and for $g\in C_0(\IR)_{odd}$, we define the functional calculus by
$$g(v)=\left\{\begin{aligned}&0&&,v=0\\&g(\|v\|)\frac{v}{\|v\|}&&,v\ne0,\end{aligned}\right.$$
where $g(v)$ is an element of $\Cl(H)$.

\begin{Lem}
For any $x_0\in B$ and $f\in C_0(\IR)$, the Bott map $\beta_{x_0}$ is defined by functional calculus such that
$$(\beta_{x_0}(f))(x,t)=f(C_{x_0}(x,t)),$$
for each $x\in B$ and $t\in\IR_+$. Then $\beta_{x_0}$ gives a graded homomorphism from $C_0(\IR)$ to $C_{b,0}(B\times\IR_+,\Cl(\H\oplus\IR))$.
\end{Lem}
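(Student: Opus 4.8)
The plan is to reduce everything to the pointwise behaviour of the continuous functional calculus and then check the three requirements separately: (i) for each fixed $(x,t)$ the assignment $f\mapsto f(C_{x_0}(x,t))$ is a graded $*$-homomorphism $C_0(\IR)\to\Cl(\H\oplus\IR)$; (ii) the resulting $\Cl(\H\oplus\IR)$-valued function $\beta_{x_0}(f)$ on $B\times\IR_+$ is bounded, continuous, and takes values in $\Cl(\H)$ on $B\times\{0\}$, i.e.\ lies in $C_{b,0}(B\times\IR_+,\Cl(\H\oplus\IR))$; (iii) since addition, multiplication, involution and the grading on $C_{b,0}$ are all defined pointwise, (i) applied at every $(x,t)$ forces $\beta_{x_0}$ itself to be a graded $*$-homomorphism.

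First I would fix $x\in B$, $t\in\IR_+$ and set $v=(\phi(x-x_0),t)\in\H\oplus\IR$, identifying $v$ with the self-adjoint element $\hat v=C(v)+C^*(v)\in\Cl(\H\oplus\IR)$ as in the text, so $C_{x_0}(x,t)=\hat v$. The Clifford relation gives $\hat v^2=\langle v,v\rangle\cdot 1=\|v\|^2\cdot 1$, so the unital $C^*$-subalgebra generated by $\hat v$ is commutative with spectrum contained in $\{\|v\|,-\|v\|\}$, and the unital continuous functional calculus yields a $*$-homomorphism $C(\IR)\to\Cl(\H\oplus\IR)$, $f\mapsto f(\hat v)$, whose restriction to the $*$-subalgebra $C_0(\IR)$ is exactly the map in the statement. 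Using the spectral projections $\tfrac12(1\pm\hat v/\|v\|)$ for $v\neq 0$ (and the value $f(0)\cdot 1$ for $v=0$) one checks directly that $f(\hat v)=f_{ev}(\|v\|)\cdot 1+f_{odd}(\|v\|)\tfrac{\hat v}{\|v\|}$, which is the even/odd functional calculus defined just before the Lemma; this establishes (i), including the grading claim, since $\hat v$ is odd in $\Cl(\H\oplus\IR)$ and $C_0(\IR)_{ev}$ (resp.\ $C_0(\IR)_{odd}$) is carried into the even part $\IC\cdot 1$ (resp.\ the odd part $\IR\hat v$).

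Next I would verify (ii). Boundedness is immediate: $\|\beta_{x_0}(f)(x,t)\|=\sup_{\lambda\in\sigma(\hat v)}|f(\lambda)|\le\|f\|_\infty$. For continuity, $(x,t)\mapsto v=(\phi(x-x_0),t)$ is continuous from $B\times\IR_+$ to $\H\oplus\IR$ because the extended Mazur map $\phi\colon B\to\H$ is continuous (Proposition \ref{extend Mazur uniform homeo}); the linear map $v\mapsto\hat v$ is isometric, hence continuous; and $v\mapsto f(\hat v)$ is continuous on all of $\H\oplus\IR$ by the explicit formula, the even summand $f_{ev}(\|v\|)\cdot 1$ being obviously continuous in $v$ and the odd summand $f_{odd}(\|v\|)\tfrac{\hat v}{\|v\|}$ being continuous away from $0$ and having norm $|f_{odd}(\|v\|)|\to|f_{odd}(0)|=0$ as $v\to 0$ (an odd function in $C_0(\IR)$ vanishes at $0$). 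Composing, $\beta_{x_0}(f)$ is continuous. Finally, for $t=0$ we have $C_{x_0}(x,0)=\widehat{\phi(x-x_0)}\in\Cl(\H)$, and $\Cl(\H)$ is a unital $C^*$-subalgebra of $\Cl(\H\oplus\IR)$; since $f(\widehat{\phi(x-x_0)})$ is a norm limit of polynomials in $\widehat{\phi(x-x_0)}$ and $1$, it again lies in $\Cl(\H)$, so the defining condition of $C_{b,0}$ holds. Combining with (i) and (iii) finishes the proof.

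I do not expect a substantial obstacle here; the only points needing care are the behaviour of the odd functional calculus near $v=0$ (handled by $f_{odd}(0)=0$) and the fact that, although $C_{x_0}$ itself is an \emph{unbounded} $\Cl(\H\oplus\IR)$-valued function on $B\times\IR_+$, composing with any $f\in C_0(\IR)$ produces a bounded and continuous one; the continuity of $\phi$ supplied by Proposition \ref{extend Mazur uniform homeo} is what makes this go through for a general $\ell^p$-space.
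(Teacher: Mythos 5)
Your proposal is correct and follows essentially the same route as the paper, which simply observes that the statement follows from the definition of the even/odd functional calculus together with the continuity of the extended Mazur map $\phi$. You supply the details the paper leaves implicit (the identification $f(\hat v)=f_{ev}(\|v\|)1+f_{odd}(\|v\|)\hat v/\|v\|$ via $\hat v^2=\|v\|^2$, continuity at $v=0$ using $f_{odd}(0)=0$, and the $t=0$ condition), but there is no difference in method.
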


\begin{proof}
Combining the definition of the functional calculus and the fact that $\phi$ is continuous, it is obvious that $\beta_{x_0}(f)$ is continuous.
\end{proof}

\begin{Def}
The algebra $\A(B)$ is the $C^*$-subalgebra $C_{b,0}(B\times\IR_+,\Cl(\H\oplus\IR))$ generated by
$$\{\beta_x(f)\mid x\in B,f\in C_0(\IR)\}.$$
Let $W$ be a subspace of $B$, the algebra $\A(B,W)$ is defined to be the $C^*$-subalgebra of $\A(B)$ generated by
$$\{\beta_x(f)\mid x\in W\subseteq B,f\in C_0(\IR)\}.$$
\end{Def}

\begin{Lem}\label{cont. Bott}
Fix $f\in\S$.\begin{enumerate}
\item[(1)]For any $\varepsilon>0$, there exists $\delta>0$ such that
$$\|\beta_x(f)-\beta_y(f)\|\leq\varepsilon$$
whenever $x,y\in B$ satisfies $\|x-y\|_p\leq\delta$.
\item[(2)]As a corollary, let $\{x_n\}\in B$ be a sequence which converges to $x_0\in B$. Then
$$\lim_{n\to\infty}\|\beta_{x_n}(f)-\beta_{x_0}(f)\|=0$$
for each $f\in C_0(\IR)$.
\end{enumerate}\end{Lem}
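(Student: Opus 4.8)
The plan is to reduce everything to the uniform continuity of the extended Mazur map (Proposition \ref{extend Mazur uniform homeo}) together with the continuity of functional calculus on $\Cl(\H\oplus\IR)$. Since $\A(B)\subseteq C_{b,0}(B\times\IR_+,\Cl(\H\oplus\IR))$ carries the supremum norm, we have
\[
\|\beta_x(f)-\beta_y(f)\| = \sup_{(z,t)\in B\times\IR_+}\bigl\|f\bigl(C_x(z,t)\bigr)-f\bigl(C_y(z,t)\bigr)\bigr\|_{\Cl(\H\oplus\IR)},
\]
so it suffices to bound $\|f(v)-f(w)\|_{\Cl(\H\oplus\IR)}$ in terms of $\|v-w\|_{\H\oplus\IR}$, uniformly, where $v=C_x(z,t)=(\phi(z-x),t)$ and $w=C_y(z,t)=(\phi(z-y),t)$. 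The key point is that $v$ and $w$ have the same second coordinate $t$, so $\|v-w\|_{\H\oplus\IR} = \|\phi(z-x)-\phi(z-y)\|_\H$, and this is governed by the modulus of continuity of $\phi$ on balls.

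First I would establish the scalar estimate: for $f\in\S = C_0(\IR)$ (so $f$ is the restriction of a continuous function vanishing at infinity, hence uniformly continuous on $\IR$), and for $v,w$ in a real Hilbert space $H$, one has $\|f(v)-f(w)\|_{\Cl(H)}\to 0$ as $\|v-w\|\to 0$, uniformly over all $v,w\in H$. Writing $f = f_{\mathrm{ev}} + f_{\mathrm{odd}}$ and using the definition of functional calculus, the even part only depends on $\|v\|,\|w\|$ and is controlled by the uniform continuity of $f_{\mathrm{ev}}$ together with $\bigl|\|v\|-\|w\|\bigr|\leq\|v-w\|$; the odd part requires comparing $g(\|v\|)v/\|v\|$ with $g(\|w\|)w/\|w\|$, which one splits as $\bigl(g(\|v\|)-g(\|w\|)\bigr)v/\|v\|$ plus $g(\|w\|)\bigl(v/\|v\| - w/\|w\|\bigr)$ and estimates using $\|v/\|v\| - w/\|w\|\| \leq 2\|v-w\|/\max\{\|v\|,\|w\|\}$ away from $0$ and the smallness of $g$ near $0$. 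By a density/approximation argument one may in fact first assume $f$ is a Schwartz function or even a polynomial times a Gaussian, where the estimate is cleanest, and then pass to general $f\in\S$ by uniform approximation in $\|\cdot\|_\infty$, since $\|h(v)\|_{\Cl(H)}\leq\|h\|_\infty$ for self-adjoint $v$.

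Next, given $\varepsilon>0$, choose by the scalar estimate a $\delta'>0$ so that $\|f(v)-f(w)\|_{\Cl(\H\oplus\IR)}\leq\varepsilon$ whenever $\|v-w\|_{\H\oplus\IR}\leq\delta'$. Because $f\in\S$ vanishes at infinity, there is $R_0>0$ with $\|f(v)\|\leq\varepsilon/2$ whenever $\|v\|\geq R_0$; so on the region where $\|\phi(z-x)\|\geq R_0$ and $\|\phi(z-y)\|\geq R_0$ the difference is automatically $\leq\varepsilon$. On the complementary region, $\phi(z-x)$ or $\phi(z-y)$ lies in a fixed ball $\mathrm{Ball}_\H(0,R_0)$, which by the homeomorphism property of $\phi$ (Proposition \ref{extend Mazur uniform homeo}) forces $z-x$ (resp. $z-y$) to lie in a fixed ball $\mathrm{Ball}_B(0,R_1)$ with $R_1 = R_0^{2/p}$; enlarging to $\mathrm{Ball}_B(0,R_1+1)$ and shrinking $\delta\leq 1$, both $z-x$ and $z-y$ lie in $\mathrm{Ball}_B(0,R_1+1)$ whenever $\|x-y\|_p\leq\delta$. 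Now apply the uniform continuity of $\phi$ on $\mathrm{Ball}_B(0,R_1+1)$ from Proposition \ref{extend Mazur uniform homeo}: choose $\delta\in(0,1]$ so that $\|(z-x)-(z-y)\|_p = \|x-y\|_p\leq\delta$ implies $\|\phi(z-x)-\phi(z-y)\|_\H\leq\delta'$, hence $\|C_x(z,t)-C_y(z,t)\|_{\H\oplus\IR}\leq\delta'$ and the difference is $\leq\varepsilon$. Taking the supremum over $(z,t)$ gives (1), and (2) follows immediately by applying (1) with $y=x_0$ and $x=x_n$.

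The main obstacle is purely bookkeeping: the functional calculus for the odd part of $f$ is singular at $0\in\Cl(H)$, so the scalar estimate must be organized so that the near-zero behavior is absorbed by the smallness of $f$ (not its derivative), and one must be careful that the modulus of continuity obtained is genuinely uniform over the whole of $\H\oplus\IR$ — this is exactly what the vanishing-at-infinity of $f$ buys us, by restricting the relevant region to a fixed ball before invoking Proposition \ref{extend Mazur uniform homeo}.
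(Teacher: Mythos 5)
Your proposal is correct and takes essentially the same route as the paper's own proof: split $f$ into even and odd parts, use the vanishing of $f$ at infinity to reduce to the region where $z-x$ and $z-y$ lie in a fixed ball of $B$, and then invoke Proposition \ref{extend Mazur uniform homeo} for the uniform continuity of $\phi$ on that ball. The only cosmetic difference is that you factor out a standalone ``scalar estimate'' (uniform continuity of $v\mapsto f(v)$ in $\Cl(\H\oplus\IR)$, with the odd part controlled by the bound $\|v/\|v\|-w/\|w\|\|\leq 2\|v-w\|/\max\{\|v\|,\|w\|\}$ together with smallness of the odd part near $0$), whereas the paper treats the odd part via the dense subclass $f_1(t)=tg(t)$ with $g\in C_0(\IR)_{ev}$; both devices are fine.
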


\begin{proof} 
Let $f_0$ and $f_1$ be the even part and odd part of $f$. There exists $R>0$ such that $|f(t)|\leq\varepsilon/2$ for all $|t|>R$. 

Case 1. For the even part, by definition, for each $x,y,z\in B$, we have that
$$\|(\beta_x(f_0)-\beta_y(f_0))(z,t)\|=|f_0(\|C_x(z,t)\|)-f_0(\|C_y(z,t)\|)|.$$ If $(z,t)$ satisfies that both $\|C_x(z,t)\|$ and $\|C_y(z,t)\|$ are greater than $R$, then
$$|f_0(\|C_x(z,t)\|)-f_0(\|C_y(z,t)\|)|\leq |f_0(\|C_x(z,t)\|)|+|f_0(\|C_y(z,t)\|)|\leq\varepsilon.$$
Notice that $\|C_x(z,t)\|=(\|\phi(z-x)\|^2+t^2)^{1/2}=(\|z-x\|^p_p+t^2)^{1/2}$.
If one of $\|C_x(z,t)\|$ or $\|C_y(z,t)\|$ is less than $R$, without loss of generality, assume that $\|C_x(z,t)\|<R$. Then $\|z-x\|^p_p< R^2$. Notice that
\begin{equation*}\begin{split}
\left|\|C_x(z,t)\|-\|C_y(z,t)\|\right|&=\left(\|z-x\|^p_p+t^2\right)^{1/2}-\left(\|z-y\|^p_p+t^2\right)^{1/2}\\
&\leq \left(\|z-x\|^p_p-\|z-y\|^p_p\right)^{1/2},
\end{split}\end{equation*}
and
$$\left|\|z-x\|_p-\|z-y\|_p\right|\leq \|x-y\|_p.$$
Set $\delta_0>0$ satisfying that $|f_0(t_1)-f_0(t_2)|<\frac\varepsilon2$ for all $t_1,t_2$ with $|t_1-t_2|<\delta_0$. 
As $y(t)=t^p$ is uniform continuous in $[0,R^{p/2}+1]$, then there exists $\delta_1>0$ such that $|t_1^p-t_2^p|<\delta_0^2$ for all $t_1,t_2\in[0,R^{p/2}+1]$ with $|t_1-t_2|<\delta_1$. Set $\delta_{ev}=\min\{\delta_1,1\}$. If $\|x-y\|_p\leq \delta_{ev}$, then both $\|z-x\|_p$ and $\|z-y\|_p$ are in $[0,R^{p/2}+1]$ and we have that
$$|f_0(\|C_x(z,t)\|)-f_0(\|C_y(z,t)\|)|\leq\frac\varepsilon2.$$

Case 2. For the odd part, assume that there exists $g\in C_0(\IR)_{ev}$ such that $f_1(t)=tg(t)$. Such elements are dense in $C_0(\IR)_{odd}$. Then, by definition,
\begin{equation*}\begin{split}
\|(\beta_x(f_1)-\beta_y(f_1))(z,t)\|=&\left\|g(\|C_x(z,t)\|)C_x(z,t)-g(\|C_y(z,t)\|)C_y(z,t)\right\|\\
\leq&\left\|g(\|C_x(z,t)\|)-g(\|C_y(z,t)\|)C_x(z,t)\right\|\\
&+|g(C_y(z,t))|\cdot\|C_x(z,t)-C_y(z,t)\|.
\end{split}\end{equation*}
Similarly, if both $\|C_x(z,t)\|$ and $\|C_y(z,t)\|$ are greater than $R$, then
$$\left\|g(\|C_x(z,t)\|)C_x(z,t)-g(\|C_y(z,t)\|)C_y(z,t)\right\|\leq |f_1(\|C_x(z,t)\|)|+|f_1(\|C_y(z,t)\|)|\leq\varepsilon.$$
If one of $\|C_x(z,t)\|$ or $\|C_y(z,t)\|$ is less than $R$, with a similar argument as above, we can find a $\delta_3>0$ such that $|g(\|C_x(z,t)\|)-g(\|C_y(z,t)\|)|\leq \frac \varepsilon{4R}$. For the second term, we have that
$$\|C_x(z,t)-C_y(z,t)\|=\|\phi(z-x)-\phi(z-y)\|.$$
By Proposition \ref{extend Mazur uniform homeo}, there exists $0<\delta_4<1$ associated with $\phi|_{Ball_B(0,R^{2/p}+1)}$ such that if $\|x-y\|\leq \delta_4$, then $\|\phi(z-x)-\phi(z-y)\|\leq\frac\varepsilon{4\|g\|}$. Set $\delta_{odd}=\min\{\delta_3,\delta_4\}$, we conclude that if $\|x-y\|_p\leq\delta_{odd}$, then
$$\|(\beta_x(f_1)-\beta_y(f_1))(z,t)\|\leq\frac\varepsilon2$$
for all $(z,t)\in B\times\IR$. 

Set $\delta=\min\{\delta_{ev},\delta_{odd}\}$, then
$$\|\beta_x(f)-\beta_y(f)\|\leq\|\beta_x(f_0)-\beta_y(f_0)\|+\|\beta_x(f_1)-\beta_y(f_1)\|\leq\varepsilon.$$
This completes the proof.
\end{proof}

\begin{Cor}
Let $\{W_n\}_{n\in\IN}$ be an increasing sequence of finite dimensional linear subspaces of $B$ with $\bigcup_{n\in\IN}W_n$ dense in $B$. Then\begin{enumerate}
\item[(1)]$\A(B,B)=\A(B)$ and $\A(B,W_n)\subseteq\A(B,W_{n+1})$;
\item[(2)]$\lim_{n\to\infty}\A(B,W_n)=\A(B,\bigcup_{n\in\IN}W_n)=\A(B,\overline{\bigcup_{n\in\IN}W_n})$.
\end{enumerate}\end{Cor}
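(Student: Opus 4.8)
The plan is to verify the two clauses in turn, relying on Lemma \ref{cont. Bott} as the essential analytic input. For clause (1), the equality $\A(B,B)=\A(B)$ is immediate from the definitions, since taking $W=B$ in the definition of $\A(B,W)$ reproduces exactly the generating set $\{\beta_x(f)\mid x\in B, f\in C_0(\IR)\}$ of $\A(B)$. The inclusion $\A(B,W_n)\subseteq\A(B,W_{n+1})$ is equally formal: since $W_n\subseteq W_{n+1}$, every generator $\beta_x(f)$ with $x\in W_n$ is also a generator of $\A(B,W_{n+1})$, and since $\A(B,W_n)$ is by definition the $C^*$-algebra generated by these elements, it is contained in $\A(B,W_{n+1})$.

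For clause (2), the inclusions $\A(B,W_n)\subseteq\A(B,\bigcup_n W_n)\subseteq\A(B,\overline{\bigcup_n W_n})$ hold by the same monotonicity argument, so $\overline{\bigcup_n\A(B,W_n)}\subseteq\A(B,\overline{\bigcup_n W_n})$; the content is the reverse inclusion. It suffices to show that every generator $\beta_x(f)$ of $\A(B,\overline{\bigcup_n W_n})$, i.e. with $x\in\overline{\bigcup_n W_n}$ and $f\in C_0(\IR)$, lies in $\overline{\bigcup_n\A(B,W_n)}$. Since $\bigcup_n W_n$ is dense in $\overline{\bigcup_n W_n}$, choose a sequence $x_k\in\bigcup_n W_n$ with $x_k\to x$; each $x_k$ lies in some $W_{n(k)}$, so $\beta_{x_k}(f)\in\A(B,W_{n(k)})\subseteq\bigcup_n\A(B,W_n)$. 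By Lemma \ref{cont. Bott}(2), $\|\beta_{x_k}(f)-\beta_x(f)\|\to 0$, so $\beta_x(f)\in\overline{\bigcup_n\A(B,W_n)}$. Taking $C^*$-algebras generated by both sides gives $\A(B,\overline{\bigcup_n W_n})\subseteq\overline{\bigcup_n\A(B,W_n)}$, and since $\overline{\bigcup_n W_n}=\overline{\bigcup_n W_n}$ trivially contains $\bigcup_n W_n$, the same chain of inclusions sandwiches all three algebras $\A(B,\bigcup_n W_n)$, $\A(B,\overline{\bigcup_n W_n})$, and $\lim_n\A(B,W_n)=\overline{\bigcup_n\A(B,W_n)}$ together, proving the claimed equalities.

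The only step requiring genuine work is the norm-convergence $\beta_{x_k}(f)\to\beta_x(f)$, but this is precisely Lemma \ref{cont. Bott}, which has already been established; here $B=\ell^p(\IN,\IR)$ being separable guarantees that one can indeed pick such increasing finite-dimensional $W_n$ with dense union, and that the density argument above applies. I therefore do not anticipate any real obstacle: the corollary is a formal consequence of the continuity of the Bott map in the base point together with the elementary fact that a $C^*$-algebra generated by a set $S$ equals the closure of the $*$-algebra generated by $S$, so membership can be checked on generators up to norm limits.
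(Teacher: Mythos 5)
Your proposal is correct and follows essentially the same route as the paper: the first clause and the first equality in (2) are formal consequences of the definition of $\A(B,W)$ via generators, and the only substantive step, identifying $\A(B,\overline{\bigcup_n W_n})$ with $\overline{\bigcup_n\A(B,W_n)}$, is handled exactly as in the paper by approximating the base point and invoking Lemma \ref{cont. Bott}. Your write-up just spells out the sandwich of inclusions that the paper leaves implicit.
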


\begin{proof}
The first term and the first equality in (2) are immediate from the definition. The second equality in (2) comes from Lemma \ref{cont. Bott}.
\end{proof}

\subsection{The $K$-theory of $\A(B)$}

In this subsection, we shall calculate the $K$-theory of $\A(B)$. We will show that the Bott map $(\beta_{x_0})_*:K_*(\S)\to K_*(\A(B))$ is an isomorphism for any $x_0\in B$. One can find a proof in \cite[Remark 7.7]{GWY2018} for the case when $p=2$. We remark that the Bott periodicity theorem holds for any Banach space which is spherical equivalent to $\ell^2$ by using a similar argument.

Let $N_n=\{0,1,\cdots,n\}\subseteq \IN$. We denote $B_n=\ell^p(N_n,\IR)$ and $H_n=\ell^2(N_n,\IR)$ to be the finite dimensional subspaces of $B$ and $\H$, respectively. It is clear that $\overline{\bigcup_{n=0}^{\infty}B_n}=B$ and $\overline{\bigcup_{n=0}^{\infty}H_n}=\H$. Denote $\S=C_0(\IR)$ equipped with the grading according to even and odd functions.

Define $C_{b,0}(B_n\times\IR_+,\Cl(H_n\oplus\IR))$ to be the subalgebra of $C_b(B_n\times\IR_+,\Cl(H_n\oplus\IR))$ consisting of all bounded function $f$ such that
$$f(x,0)\in \Cl(H_n)\subseteq\Cl(H_n\oplus\IR).$$
Since the Mazur map restricts to a uniform homeomorphism $\psi_n=\psi|_{B_n}:S(B_n)\to S(H_n)$, for any $x_0\in B_n$, we can still define the Clifford operator $C^n_{x_0}\in C(B_n\times\IR_+,\Cl(H_n\oplus\IR))$ on $B_n$ by
$$C^n_{x_0}(x,t)=(\phi_n(x-x_0),t)\in\Cl(H_n\oplus\IR),$$
where $\phi_n:B_n\to H_n$ is the extended Mazur map of $\psi_n$ and $x\in B_n$, $t\in\IR_+$. The Bott map $\beta_{x_0}^n:\S\to C_{b,0}(B_n\times\IR_+,\Cl(H_n\oplus\IR))$ is defined to be
$$\left(\beta^n_{x_0}(f)\right)(x,t)=f(C^n_{x_0}(x,t)).$$

\begin{Def}
Define $\A(B_n)$ to be the subalgebra of $C_{b,0}(B_n\times\IR_+,\Cl(H_n\oplus\IR))$ generated by
$$\{\beta^n_x(f)\mid f\in\S, x\in B_n\}.$$
\end{Def}

Similarly, one can also define the Clifford operator on $H_n$ and $\A(H_n)$ by taking $p=2$. To clearfy the notation, we write the Bott map for $H_n$ associated to $v\in H_n$ by $\beta^{H_n}_v$.

\begin{Lem}
$\A(B_n)$ is a $(B_n\times\IR_+)$-$C^*$-algebra.
\end{Lem}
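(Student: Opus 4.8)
The plan is to exhibit an explicit $*$-homomorphism $C_0(B_n \times \IR_+) \otimes \Cl(H_n \oplus \IR) \to M(\A(B_n))$ (or rather a nondegenerate structure map) making $\A(B_n)$ into a $C_0(B_n\times\IR_+)$-algebra, i.e. a continuous field of $C^*$-algebras over $B_n\times\IR_+$. Concretely, an element $f \in C_0(B_n\times\IR_+)$ should act on a generator $\beta^n_x(g)$ by pointwise multiplication: $(f\cdot \beta^n_x(g))(y,t) = f(y,t)\,\beta^n_x(g)(y,t)$. First I would check that this pointwise product again lies in $\A(B_n)$. Since $\beta^n_x(g)(y,t) = g(C^n_x(y,t))$ and $C^n_x(y,t) = (\phi_n(y-x),t)$, and since $C_0(B_n\times\IR_+)$ is generated (as a $C^*$-algebra, up to the constants coming from the unitization) by functions of the form $(y,t)\mapsto h(\|\phi_n(y-x)\|^2 + t^2)$ for $h\in C_0(\IR)_{ev}$ together with coordinate functions — here one uses that $\phi_n$ is a homeomorphism of $B_n$ onto $H_n$ by Proposition \ref{extend Mazur uniform homeo}, so these functions separate points of $B_n\times\IR_+$ — one sees that multiplication by such generators sends $\beta^n_x(g)$ to another element expressible through the functional calculus $\beta^n_x$ and the even-function multiplications, hence stays in $\A(B_n)$.

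Next I would verify the two defining axioms of a $C_0(B_n\times\IR_+)$-algebra: that the image of $C_0(B_n\times\IR_+)$ lies in the center of the multiplier algebra $M(\A(B_n))$, and that the action is nondegenerate, i.e. $C_0(B_n\times\IR_+)\cdot\A(B_n)$ is dense in $\A(B_n)$. Centrality is immediate because pointwise multiplication by a scalar-valued function commutes with everything in $C(B_n\times\IR_+,\Cl(H_n\oplus\IR))$. For nondegeneracy, given a generator $\beta^n_x(g)$ with $g\in\S$, one approximates it by $f\cdot\beta^n_x(g)$ where $f\in C_0(B_n\times\IR_+)$ is chosen to be $1$ on a large ball; here boundedness of the functions in $C_{b,0}$ and the fact that $g\in C_0(\IR)$ forces $\beta^n_x(g)(y,t)$ to be small once $\|\phi_n(y-x)\|^2+t^2$ is large, so a single cutoff in that radial variable suffices — but one must also confirm the cutoff can be taken in $C_0(B_n\times\IR_+)$, which holds because $B_n$ is finite-dimensional, hence $\|\phi_n(\cdot - x)\|$ and the ball $\{t \le R\}$ together have the property that $\{(y,t): \|\phi_n(y-x)\|^2+t^2\le R^2\}$ is compact (again using that $\phi_n$ is a homeomorphism onto $H_n$ and $H_n$ is finite-dimensional).

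The main obstacle I expect is the nondegeneracy/density step, and specifically the book-keeping showing that products $f\cdot\beta^n_x(g)$ genuinely remain inside the generated subalgebra $\A(B_n)$ rather than merely inside the ambient $C_{b,0}$: one needs that multiplication by the radial cutoff $h(\|\phi_n(y-x)\|^2+t^2)$ is realized by $\beta^n_x$ of some even function, which is true because for even $h$ one has $h(C^n_x(y,t)) = h(\|C^n_x(y,t)\|) = h\big((\|\phi_n(y-x)\|^2+t^2)^{1/2}\big)$, so $\beta^n_x(h\circ\sqrt{\cdot})$ is exactly that cutoff. Once this identification is in hand, the compactness of sublevel sets of $(y,t)\mapsto \|\phi_n(y-x)\|^2 + t^2$ in the finite-dimensional space $B_n\times\IR_+$ gives that such cutoffs lie in $C_0(B_n\times\IR_+)$, and approximation of the identity by these cutoffs together with the already-established fact (Lemma \ref{cont. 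Bott}) that $\beta^n_x$ is continuous in $x$ yields nondegeneracy on generators, hence on all of $\A(B_n)$. The upper-semicontinuity of the fiberwise norms, and that the fibers are the evident quotients, then follow automatically from the general theory of $C_0(Y)$-algebras, so no further work is needed for the statement as phrased.
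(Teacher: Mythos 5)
Your proposal is correct and follows essentially the same route as the paper: the paper likewise identifies the even functional-calculus subalgebra $\A_{ev}(B_n)=C^*\{\beta^n_x(h):h\in C_0(\IR)_{ev}\}$ with $C_0(B_n\times\IR_+)$ by a Stone--Weierstrass point-separation argument (using that $\|C^n_{x}(y,t)\|=(\|y-x\|_p^p+t^2)^{1/2}$ distinguishes distinct points), notes this subalgebra is central, and gets nondegeneracy from the identity $\beta^n_x(f)\beta^n_x(f_0)=\beta^n_x(f)$ for an even cutoff $f_0$ equal to $1$ on $\supp(f)$ --- exactly your radial-cutoff step.
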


\begin{proof}
Let $\A_{ev}(B_n)$ be the $C^*$-subalgebra of $\A(B_n)$ generated by
$$\left\{\beta^n_x(f)\mid x\in B_n,\ f\in C_0(\IR)_{ev}\right\}.$$
Then $\A_{ev}(B_n)$ is a subalgebra of the center $\mathcal Z(\A(B_n))$. It is clear that $\beta^n_x(f)\in C_0(B_n\times\IR_+)$ for any $x\in B_n$ and $f\in C_0(\IR)_{ev}$. For any different points $(x_1,t_1),(x_2,t_2)\in B_n\times\IR_+$ with $t_1\leq t_2$, we have that
$$\|C_{x_1}(x_1,t_1)\|=|t_1|<\sqrt{\|x_1-x_2\|_p^p+t_2^2}=\|C_{x_1}(x_2,t_2)\|.$$
Choose a suitable $f\in C_0(\IR)_{ev}$ such that $f(\|C_{x_1}(x_1,t_1)\|)\ne f(\|C_{x_1}(x_2,t_2)\|)$, i.e.,
$$\left(\beta^n_{x_1}(f)\right)(x_1,t_1)\ne\left(\beta^n_{x_1}(f)\right)(x_2,t_2)$$
By Stone-Weierstrass theorem, we conclude that $\A_{ev}(B_n)\cong C_0(B_n\times\IR_+)$.

For any $f\in C_c(\IR)$, let $f_0\in C_0(\IR)$ be an even function such that $f_0(t)=1$ for all $t\in\supp(f)$, thus $\beta^n_x(f)\beta^n_x(f_0)=\beta^n_x(f)$ for any $x\in  B_n$. This means that $C_0(B_n\times\IR_+)\cdot\A(B_n)$ is dense in $\A(B_n)$. Then $\A(B_n)$ is a $(B_n\times\IR_+)$-$C^*$-algebra.
\end{proof}

In \cite{KY2012}, G.~Kasparov and G.~Yu introduced a $C^*$-algebra $\S\C(B_n)$ associated with $B_n$. We briefly recall the definition here. Denote $\C(B_n)=C_0(B_n,\Cl(H_n))$ equipped with the grading induced from $\Cl(H_n)$. Denote $\S\C(H_n)=\S\wox\C(H_n)$ to be the graded tensor product.

\begin{Thm}\label{finite dimensional case}
With notations as above, $\A(B_n)$ is isomorphic to $\S\C(B_n)$ for any $n\in\IN$.
\end{Thm}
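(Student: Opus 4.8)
The plan is to build an explicit $\ast$-isomorphism $\A(B_n)\to\S\C(B_n)$ by passing through the finite-dimensional Mazur map $\phi_n\colon B_n\to H_n$, which is a homeomorphism by Proposition~\ref{extend Mazur uniform homeo}. The first step is to observe that the map $\Psi_n\colon B_n\times\IR_+\to H_n\times\IR_+$ given by $\Psi_n(x,t)=(\phi_n(x),t)$ is a homeomorphism fixing the boundary slice $t=0$ inside $H_n$, hence induces a $\ast$-isomorphism
$$\Psi_n^*\colon C_{b,0}(H_n\times\IR_+,\Cl(H_n\oplus\IR))\xrightarrow{\ \cong\ }C_{b,0}(B_n\times\IR_+,\Cl(H_n\oplus\IR)),\qquad (\Psi_n^* g)(x,t)=g(\phi_n(x),t).$$
Under this identification one has $\Psi_n^*(C^{H_n}_{\phi_n(x_0)})=C^n_{x_0}$ for every $x_0\in B_n$ (this is exactly the definition of $C^n_{x_0}$ via $\phi_n$), and since the Bott maps are defined by functional calculus on the Clifford operators, $\Psi_n^*\circ\beta^{H_n}_{\phi_n(x_0)}=\beta^n_{x_0}$ as homomorphisms $\S\to C_{b,0}$. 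Because $\phi_n$ is a bijection, $x_0\mapsto\phi_n(x_0)$ ranges over all of $H_n$, so $\Psi_n^*$ carries the generating set $\{\beta^{H_n}_v(f)\mid v\in H_n,\ f\in\S\}$ onto the generating set $\{\beta^n_x(f)\mid x\in B_n,\ f\in\S\}$, and therefore restricts to a $\ast$-isomorphism $\A(H_n)\cong\A(B_n)$.

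The second step reduces the problem to the Hilbert-space ($p=2$) case: by the previous paragraph it now suffices to prove $\A(H_n)\cong\S\C(H_n)$, and for this I would invoke the standard structure of the algebra $\S\C(V)$ for a finite-dimensional Euclidean space $V$ from \cite{HKT1998,KY2012}. Concretely, fix a base point, say the origin, and consider the Clifford operator $C^{H_n}_0(v,t)=(v,t)\in H_n\oplus\IR$; the assignment $a\mapsto$ (the function $(v,t)\mapsto$ evaluation against $a$ shifted by $C^{H_n}_0$) is the usual identification used to define $\S\C(H_n)=\S\wox C_0(H_n,\Cl(H_n))$, where the $\S$-factor absorbs the $\IR_+$-direction together with the functional calculus in the extra Clifford generator and the $C_0(H_n,\Cl(H_n))$-factor records dependence on $v\in H_n$ with values in the Clifford algebra. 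One checks that $\A(H_n)$, generated by all $\beta^{H_n}_v(f)$, coincides with the norm closure of the span of such elementary tensors: the translates $\beta^{H_n}_v$ for varying $v$ generate the $C_0(H_n)$-module structure, while the odd part of $\S$ together with the boundary condition at $t=0$ supplies the Clifford degrees of freedom. This is precisely the content already used implicitly in the $(B_n\times\IR_+)$-$C^*$-algebra lemma above (where $\A_{ev}(B_n)\cong C_0(B_n\times\IR_+)$), transported through $\Psi_n^*$.

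The third step is to assemble these identifications into a single isomorphism $\A(B_n)\cong\S\C(B_n)$, noting that $\S\C(B_n)$ and $\S\C(H_n)$ are by definition the \emph{same} $C^*$-algebra up to relabeling the underlying locally compact space by the homeomorphism $\phi_n$ (both are $\S\wox C_0(-,\Cl(H_n))$ with $-$ a contractible finite-dimensional space); so the composite
$$\A(B_n)\xrightarrow[\cong]{(\Psi_n^*)^{-1}}\A(H_n)\xrightarrow[\cong]{}\S\C(H_n)\xrightarrow[\cong]{\phi_n^*}\S\C(B_n)$$
is the desired isomorphism. I expect the main obstacle to be the careful verification that $\A(H_n)$ is exactly $\S\C(H_n)$ and not merely a subalgebra — that is, that the generators $\beta^{H_n}_v(f)$ already exhaust the graded tensor product rather than some proper ideal or corner. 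The delicate points are: (i) that translates in $v$ together with even functions recover all of $C_0(H_n)$ (a Stone–Weierstrass argument of the type used above), (ii) that the boundary constraint $f(x,0)\in\Cl(H_n)$ matches the grading/unitization bookkeeping in $\S\C(H_n)$, and (iii) that nothing is lost in the extra $\IR$-summand of $\Cl(H_n\oplus\IR)$, which is precisely what the $\S$-factor is designed to carry. Once $\A(H_n)\cong\S\C(H_n)$ is nailed down, the transport through $\Psi_n^*$ and $\phi_n^*$ is formal.
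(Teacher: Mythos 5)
There is a genuine gap, and it sits at the very first step. You claim that the pullback $\Psi_n^*$ along $(x,t)\mapsto(\phi_n(x),t)$ satisfies $\Psi_n^*\bigl(C^{H_n}_{\phi_n(x_0)}\bigr)=C^n_{x_0}$ for every $x_0\in B_n$, "exactly by the definition of $C^n_{x_0}$". This is false unless $x_0=0$: by definition $C^n_{x_0}(x,t)=(\phi_n(x-x_0),t)$, whereas $\Psi_n^*\bigl(C^{H_n}_{\phi_n(x_0)}\bigr)(x,t)=(\phi_n(x)-\phi_n(x_0),t)$, and the extended Mazur map is not additive for $p\neq2$, so $\phi_n(x-x_0)\neq\phi_n(x)-\phi_n(x_0)$ in general. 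Consequently $\Psi_n^*$ does \emph{not} carry the generating set $\{\beta^{H_n}_v(f)\}$ onto $\{\beta^n_x(f)\}$, and your asserted restriction of $\Psi_n^*$ to a $*$-isomorphism $\A(H_n)\to\A(B_n)$ is unjustified. Worse, justifying it is essentially equivalent to the theorem: one would have to show that the image of the generated subalgebra $\A(H_n)$ under $\Psi_n^*$ (which, granting $\A(H_n)=\S\C(H_n)$, is all of $\S\C(B_n)$) coincides with the subalgebra $\A(B_n)$ generated by the $\beta^n_x(f)$ --- i.e.\ that $\A(B_n)$ exhausts $\S\C(B_n)$, which is exactly the density statement to be proved. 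Your second step has the same character: you acknowledge that the "main obstacle" is to verify $\A(H_n)=\S\C(H_n)$ rather than a proper subalgebra, but you do not carry this out, and it is not simply quotable from \cite{HKT1998}, where $\A(V)$ is by definition the whole of $\S\C(V)$ rather than the algebra generated by translated Bott elements. So the substantive content of the theorem is deferred twice and never established.

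For comparison, the paper's proof avoids the Mazur-map pullback entirely at this stage. It first rewrites $\S\C(B_n)$ as $\{f\in C_0(B_n\times\IR_+,\Cl(H_n\oplus\IR))\mid f(x,0)\in\Cl(H_n)\}$ (absorbing $\S$ into an extra Clifford generator), observes the easy inclusion $\A(B_n)\subseteq\S\C(B_n)$, and then uses that both are $(B_n\times\IR_+)$-$C^*$-algebras --- this is where the Stone--Weierstrass computation $\A_{ev}(B_n)\cong C_0(B_n\times\IR_+)$ is used --- to reduce equality to a fiberwise check: at each point $(x,t)$ the fiber of $\A(B_n)$ is $\Cl(H_n\oplus t\IR)$, which agrees with the fiber of $\S\C(B_n)$, whence density and equality. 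If you want to rescue your strategy, you would need to replace the generator-matching argument by such a fiberwise (or other Stone--Weierstrass type) density argument for $\A(B_n)$ itself; the homeomorphism $\phi_n$ is only safely usable with base point $0$, which is how the paper employs it later, in the corollary on $K$-theory, \emph{after} Theorem \ref{finite dimensional case} is in hand.
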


\begin{proof}
By definition, $\S\C(B_n)=\{f\wox h\mid f\in C_0(\IR), h\in C_0(B_n,\Cl(H_n))\}$. Define
$$\B=\{f\in C_0(\IR_+,\Cl(\IR))\mid f(0)\in\IC\}.$$
$\B$ is induced with the grading of $\Cl(\IR)\cong\IC\oplus\IC$, the scalar part and the vector part. We define a homomorphism $\varphi:\S\to\B$ by the following formula:
$$(\varphi(f))(t)=f_{ev}(t)1+f_{odd}(t)e,$$
where $f_{ev}$ and $f_{odd}$ are respectively the even and odd parts of $f$ and $e$ is a unit vector of $\IR$. It is not hard to see that $\varphi$ is an isomorphism preserving the grading.

As a corollary, we have the identification:
$$\S\C(B_n)=\{f\in C_0(B_n\times\IR_+,\Cl(H_n\oplus\IR))\mid f(x,0)\in\Cl(V_n)\},$$
where we view $\Cl(H_n)$ as a subalgebra of $\Cl(H_n\oplus\IR)$. Notice that $C_0(B_n\times\IR_+)\cong C_0(\IR)_{ev}\ox C_0(B_n)$ is a subalgebra of $\mathcal Z(\S\C(B_n))$, the center of $\S\C(B_n)$, which makes $\S\C(B_n)$ a $(B_n\times\IR_+)$-$C^*$-algebra.

Notice that $\A(B_n)$ is a subalgebra of $\S\C(B_n)$. Indeed,
$$\lim_{(x,t)\to\infty}(\beta^n_{x_0}(f))(x,t)=0\quad\text{and}\quad f(C(v,0))=f(v)\in\Cl(V).$$
It suffices to prove $\A(B_n)$ is dense in $\S\C(B_n)$. As both algebras are $(B_n\times\IR_+)$-$C^*$-algebras, it suffices to prove the fiber of $\A(B_n)$ at $(x,t)$ is the same as $\S\C(B_n)$ for any $(x,t)\in B_n\times\IR_+$. The fiber of $\A(B_n)$ at $(x,t)$ is given by
$$\A(B_n)/C_0(B_n\times\IR\backslash\{(x,t)\})\cdot\A(B_n)\cong\Cl(H_n\oplus t\IR),$$
where
$$t\IR=\left\{\begin{aligned}&0&&,t=0;\\&\IR&&,t\ne0,\end{aligned}\right.$$
which is actually same as $\S\C(B_n)$. We then finish the proof that $\A(B_n)\cong\S\C(B_n)$.
\end{proof}

\begin{Cor}The Bott map induces an isomorphism on $K$-theory:
$$(\beta_x^n)_*:K_*(\S)\xrightarrow{\cong} K_*(\A(B_n))$$
for any $x\in B_n$.
\end{Cor}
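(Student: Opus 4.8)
The plan is to deduce the corollary from Theorem~\ref{finite dimensional case} by transporting it to the Hilbert space model via the extended Mazur map, where it becomes the classical finite-dimensional Bott periodicity theorem. Fix $x\in B_n$ and set $\sigma_x\colon B_n\to H_n$, $\sigma_x(y)=\phi_n(y-x)$, the composite of a translation with the extended Mazur map $\phi_n$; by Proposition~\ref{extend Mazur uniform homeo} it is a homeomorphism, and since $\|\phi_n(z)\|_2=\|z\|_p^{\,p/2}$ for all $z$ (because $\psi_n$ takes values in the unit sphere of $H_n$) it is proper. Hence $\Sigma_x:=\sigma_x\times\mathrm{id}_{\IR_+}\colon B_n\times\IR_+\to H_n\times\IR_+$ is a proper homeomorphism, and pullback along it is a $*$-isomorphism $\Sigma_x^*\colon C_0(H_n\times\IR_+,\Cl(H_n\oplus\IR))\to C_0(B_n\times\IR_+,\Cl(H_n\oplus\IR))$.

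Next I would verify that $\Sigma_x^*$ restricts to a $*$-isomorphism $\S\C(H_n)\xrightarrow{\ \cong\ }\A(B_n)$ carrying the Hilbert-space Bott map $\beta^{H_n}_0$ to $\beta^n_x$. Here one uses the concrete descriptions from the proof of Theorem~\ref{finite dimensional case}, namely $\S\C(H_n)=\{g\in C_0(H_n\times\IR_+,\Cl(H_n\oplus\IR))\mid g(\cdot,0)\in\Cl(H_n)\}$ and $\A(B_n)=\S\C(B_n)=\{h\in C_0(B_n\times\IR_+,\Cl(H_n\oplus\IR))\mid h(\cdot,0)\in\Cl(H_n)\}$: since $\sigma_x$ maps onto $H_n$, a function $h=g\circ\Sigma_x$ satisfies the boundary condition defining $\S\C(B_n)$ exactly when $g$ satisfies the one defining $\S\C(H_n)$, so $\Sigma_x^*$ is a bijection between the two algebras. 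The intertwining is a direct computation: $\bigl(\Sigma_x^*(\beta^{H_n}_0(f))\bigr)(y,t)=f\bigl((\phi_n(y-x),t)\bigr)=\bigl(\beta^n_x(f)\bigr)(y,t)$ for all $f\in\S$ and $(y,t)\in B_n\times\IR_+$.

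It then follows that $(\beta^n_x)_*=(\Sigma_x^*)_*\circ(\beta^{H_n}_0)_*\colon K_*(\S)\to K_*(\A(B_n))$, in which $(\Sigma_x^*)_*$ is an isomorphism because $\Sigma_x^*$ is a $*$-isomorphism, and $(\beta^{H_n}_0)_*$ is an isomorphism by the finite-dimensional Bott periodicity theorem \cite{HKT1998} — via the identification $\varphi$ of the proof of Theorem~\ref{finite dimensional case}, $\beta^{H_n}_0$ is the standard Bott homomorphism $\S\to\S\wox\C(H_n)$, and this is precisely the $p=2$ case of the corollary, treated in \cite[Remark~7.7]{GWY2018}. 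Hence $(\beta^n_x)_*$ is an isomorphism for every $x\in B_n$. The one point that is not purely formal is checking that $\Sigma_x^*$ maps $\S\C(H_n)$ \emph{onto} $\A(B_n)$, and not merely into it; this relies on properness of the extended Mazur map together with the explicit fibrewise description of $\A(B_n)$ from Theorem~\ref{finite dimensional case}, so the genuine content is already contained in that theorem and the present corollary is essentially a bookkeeping step.
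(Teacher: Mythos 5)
Your proof is correct and follows essentially the same route as the paper: identify $\A(B_n)$ with $\S\C(B_n)$ via Theorem \ref{finite dimensional case}, transport everything to the Hilbert-space model by pulling back along the extended Mazur map, and invoke the $p=2$ Bott periodicity of \cite{HKT1998,GWY2018}. The only cosmetic difference is that you absorb the translation by $x$ into the pullback homeomorphism $\sigma_x(y)=\phi_n(y-x)$, whereas the paper first reduces to the case $x=0$ via the homotopy $s\mapsto\beta_{x_s}$ provided by Lemma \ref{cont. Bott} and then pulls back along $\phi_n$ alone; both handle the basepoint correctly.
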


\begin{proof}
For any $x_0,x_1\in B_n$, we take $x_s=(1-s)x_0+sx_1$ for each $s\in[0,1]$. Then $s\mapsto x_s$ is a continuous path connecting $x_0$ and $x_1$. By Lemma \ref{cont. Bott}, we have that $\beta_{x_s}$ forms a homotopy between $\beta_{x_0}$ and $\beta_{x_1}$. Thus it suffices to prove the theorem for $0\in B_n$.

Define $\phi^*:\A(H_n)\to\A(B_n)$ by
$$(\phi^*(\sigma))(x,t)=\sigma(\phi(x),t),\quad\text{$\sigma\in\A(H_n)$ and $x\in B_n$}.$$
As $\phi$ is a homeomorphism, it is clearly to see that $\phi^*$ a $*$-isomorphism. For any $f\in\S$ and $(x,t)\in B_n\times\IR_+$, one can check that
\begin{equation*}\begin{split}
\left(\phi^*\left(\beta_0^{H_n}(f)\right)\right)(x,t)&=\left(\beta_0^{H_n}(f)\right)(\phi(x),t)\\
&=f(\phi(x),t)\\
&=\left(\beta_0^{n}(f)\right)(x,t).
\end{split}\end{equation*}
This shows that $\phi^*\circ\beta^{H_n}_0(f)=\beta^n_0(f)$. Thus, we have the following commuting diagram:
$$\begin{tikzcd}
K_*(\S) \arrow[rd, "(\beta^{B_n}_0)_*"'] \arrow[r, "(\beta^{H_n}_0)_*"] & K_*(\A(H_n)) \arrow[d, "\phi^*"] \\
                                                                       & K_*(\A(B_n)).                  
\end{tikzcd}$$
It is proved that $\left(\beta^{H_n}_0\right)_*$ is an isomorphism in \cite{HKT1998} and \cite{GWY2018}. As a result, $(\beta^{n}_0)_*$ is an isomorphism.
\end{proof}

For any $n\in\IN$, we denote $B'_n=\ell^p(\IN\backslash N_n,\IR)$. Then one can easily see that $B=B_n\oplus_pB'_n$, where $\oplus_p$ means that $B_n\oplus B'_n$ is equipped with the norm
$$\|(x,y)\|_p=(\|x\|^p_p+\|y\|^p_p)^{1/p}$$
for any $(x,y)\in B_n\oplus_pB'_n$. 
Denote by $P_n:B\to B_n$ and $Q_n:B\to B'_n$ the direct sum projection, i.e., for any $x=(x_1,x_2)\in B=B_n\oplus_pB'_n$ we have that $P_n(x)=x_1$ and $Q_n(x)=x_2$. The following fact which will be very useful in the next lemma can be checked with a easy calculation:

\textbf{Fact.} For any $(x,y)\in B_n\oplus_p B'_n=B$, we have that
$$\phi(x,y)=(\phi(x),\phi(y))\in H_n\oplus H_n'=\H,$$
i.e., $\phi=\phi|_{B_n}\oplus\phi_{B'_n}:B_n\oplus_pB_n'\to H_n\oplus H_n^{\bot}$.

For any $\sigma\in\A(B,B_n)$, we define the restriction homomorphism $\pi_n:\A(B,B_n)\to\A(B_n)$ by
$$(\pi_n(\sigma))(x,t)=\sigma(x,t)$$
for all $x\in W_n$. To see $\pi_n$ is well-defined, one can check that
$$(\pi_n\circ\beta_{x_0}(f))(x,t)=\pi_n(f(C_{x_0}(x,t)))=f(C^n_{x_0}(x,t)),$$
for any $x_0,x\in B_n$ and $t\geq 0$.

\begin{Lem}
The restriction homomorphism $\pi_n:\A(B,W_n)\to\A(B)$ is an isomorphism.
\end{Lem}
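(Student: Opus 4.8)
The plan is to exhibit an explicit two-sided inverse for $\pi_n$. Observe first that $\A(B,W_n)$ is, by definition, the $C^*$-subalgebra of $\A(B)$ generated by $\{\beta_x(f)\mid x\in B_n,\ f\in\S\}$ (here $W_n=B_n$), so a function $\sigma\in\A(B,W_n)$, although it lives on $B\times\IR_+=(B_n\oplus_p B'_n)\times\IR_+$, is assembled from Clifford operators $C_{x_0}$ with $x_0\in B_n$. Using the \textbf{Fact} that $\phi$ splits as $\phi|_{B_n}\oplus\phi|_{B'_n}$, for $x_0\in B_n$ and $x=(x_1,x_2)\in B_n\oplus_p B'_n$ we get
$$C_{x_0}(x,t)=(\phi(x-x_0),t)=\bigl(\phi(x_1-x_0),\phi(x_2)\bigr)+ (0,t)\in H_n\oplus H_n'\oplus\IR,$$
so the dependence of $\beta_{x_0}(f)$ on the $B'_n$-variable is entirely through the single vector $\phi(x_2)\in H_n'$, entering only via its norm-combination with the $H_n$-part inside $\Cl(\H\oplus\IR)$. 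This suggests that every $\sigma\in\A(B,W_n)$ is already ``constant in the $B'_n$-direction up to a Clifford rotation,'' which is the structural statement making $\pi_n$ invertible.

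The key steps, in order: (1) Define a candidate inverse $\iota_n:\A(B_n)\to\A(B,W_n)\subseteq\A(B)$ on generators by $\iota_n(\beta^n_{x_0}(f))=\beta_{x_0}(f)$ for $x_0\in B_n$, $f\in\S$, and check this is well-defined and bounded; the cleanest way is to realize $\iota_n$ as a composition with a $*$-homomorphism induced by the affine projection $P_n\colon B\to B_n$ together with the Clifford inclusion $\Cl(H_n\oplus\IR)\hookrightarrow\Cl(\H\oplus\IR)$, using the preceding displayed identity $(\pi_n\circ\beta_{x_0}(f))(x,t)=f(C^n_{x_0}(x,t))$ to match up the formulas. (2) Verify $\pi_n\circ\iota_n=\mathrm{id}_{\A(B_n)}$, which is immediate on generators: $\pi_n(\beta_{x_0}(f))(x,t)=f(C_{x_0}(x,t))$ evaluated on $B_n$ equals $f(C^n_{x_0}(x,t))=\beta^n_{x_0}(f)(x,t)$, using again the Fact and that $\phi$ restricted to $B_n$ is $\phi_n$. (3) Verify $\iota_n\circ\pi_n=\mathrm{id}_{\A(B,W_n)}$; this is the substantive direction and I would prove it on the generating set $\{\beta_{x_0}(f)\mid x_0\in B_n\}$, showing directly from the split formula for $C_{x_0}$ that $\beta_{x_0}(f)$ is determined by its restriction to $B_n\times\IR_+$ — i.e. $(\iota_n\circ\pi_n)(\beta_{x_0}(f))=\beta_{x_0}(f)$ as functions on all of $B\times\IR_+$ — and then extend to the generated $C^*$-algebra by continuity and multiplicativity. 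A $\delta$-net/uniform-continuity argument à la Lemma \ref{cont. Bott} handles the closure, once it is known on products of generators.

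The main obstacle is step (3): one must argue that no element of $\A(B,W_n)$ can ``see'' the $B'_n$ coordinate beyond what is already encoded on $B_n\times\IR_+$, i.e. that the restriction map is injective on the generated $C^*$-algebra and not merely on the dense $*$-subalgebra of polynomials in the generators. The potential subtlety is that a $C^*$-algebraic limit of such polynomials could a priori acquire genuine $B'_n$-dependence; to rule this out I would show that on each fibre $(x,t)$ the evaluation of $\A(B,W_n)$ factors through evaluation at $(P_n(x),t)$, exhibit $\A(B,W_n)$ as pulled back along $P_n\times\mathrm{id}$ from $\A(B_n)$ — using the $(B_n\times\IR_+)$-$C^*$-algebra structure established in the preceding lemmas — and conclude that $\pi_n$ is exactly the quotient identifying this pullback with $\A(B_n)$, hence an isomorphism. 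All remaining verifications (grading preservation, norm estimates) are routine.
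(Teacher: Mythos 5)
Your target map is the right one --- an inverse sending $\beta^n_{x_0}(f)$ to $\beta_{x_0}(f)$ --- but the construction you propose for it fails, and this is exactly where the content of the lemma sits. The ``cleanest way'' of step (1), namely pulling back along $P_n\times\mathrm{id}$ and composing with the Clifford inclusion $\Cl(H_n\oplus\IR)\hookrightarrow\Cl(\H\oplus\IR)$, is the map $\sigma\mapsto\big((x,t)\mapsto\sigma(P_n(x),t)\big)$, and this does \emph{not} send $\beta^n_{x_0}(f)$ to $\beta_{x_0}(f)$: already for $f$ even and $x_0=0$ one has $\beta_0(f)(x,t)=f\big((\|x\|_p^p+t^2)^{1/2}\big)$, which genuinely depends on $\|Q_n(x)\|_p$, whereas the pullback only sees $(\|P_n(x)\|_p^p+t^2)^{1/2}$; for $f$ odd the Clifford direction $\phi(Q_n(x))\in H_n^{\bot}$ is lost as well. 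For the same reason the fallback in your last paragraph is false: evaluation of $\A(B,B_n)$ at $(x,t)$ does \emph{not} factor through $(P_n(x),t)$, so $\A(B,B_n)$ is not a pullback along $P_n\times\mathrm{id}$, and $\pi_n$ is certainly not a ``quotient identifying this pullback with $\A(B_n)$'' --- indeed $\pi_n$ must be injective, not a nontrivial quotient. Your intuition ``constant in the $B'_n$-direction up to a Clifford rotation'' is correct, but none of the maps you actually write down implements the rotation.

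The missing idea, which is how the paper proceeds, is to build the inverse by a formula defined on \emph{all} of $C(B_n\times\IR_+,\Cl(H_n\oplus\IR))$: for each $(w,t)\in B'_n\times\IR_+$ take the isometric inclusion $\tau^n_{w,t}:H_n\oplus\IR\to\H\oplus\IR$ which is the identity on $H_n$ and carries the last coordinate direction onto the line spanned by $(\phi(w),t)$, and set
$$(\Phi(\sigma))(x,t)=\tau^n_{Q_n(x),t}\Big(\sigma\big(P_n(x),\,t_x\big)\Big),\qquad t_x=\big\|(\phi(Q_n(x)),t)\big\|,$$
so that both the norm contribution of the $B'_n$-coordinate and its Clifford direction are reinserted. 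Because $\Phi$ is a globally defined $*$-homomorphism, well-definedness (your step (1)) is automatic, and the single computation $\Phi(C^n_{x_0})=C_{x_0}$, hence $\Phi(\beta^n_{x_0}(f))=\beta_{x_0}(f)$, shows at once that $\Phi$ maps $\A(B_n)$ onto $\A(B,B_n)$ and that $\Phi\circ\pi_n$ and $\pi_n\circ\Phi$ are $*$-homomorphisms agreeing with the identity on generating sets, hence equal to it. In particular your worry in step (3) about limits acquiring $B'_n$-dependence, and the proposed $\delta$-net/uniform-continuity argument, are unnecessary once the inverse is given by such a global formula; as your proposal stands, however, the well-definedness of $\iota_n$ on generators is never established, so the argument has a genuine gap at its first step.
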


\begin{proof}
For each $(w,t)\in B_n'\oplus\IR$, we define
$$\tau_{w,t}^n:H_n\oplus\IR\to\H\oplus\IR$$
to be the inclusion which is identity on $H_n$ and maps $(0,(\|w\|_p^2+t^2)^{1/2})\in V_n\oplus\IR$ to $(0,\phi(w),t)$, i.e., $\tau^n_{w,t}(0\oplus\IR)$ is the one-dimensional subspace of $H_n^{\bot}\oplus\IR$ spanned by $(\phi(w),t)$ and the inclusion is an isometry. This map clearly induces an inclusion of Clifford algebras $\Cl(V_n\oplus\IR)\to\Cl(\H\oplus\IR)$.

Define
$$\Phi:C(B_n\times\IR_+,\Cl(H_n\oplus\IR))\to C(B\times\IR_+,\Cl(\H\oplus\IR))$$
by the following formula:
$$(\Phi(\sigma))(x,t)=\tau^n_{Q_n(x),t}\left(\sigma(P_n(x),(\|Q_n(x)\|_p^2+t^2)^{1/2})\right)\in\Cl(\H\oplus\IR),$$
where $(P_n(x),(\|Q_n(x)\|_p^2+t^2)^{1/2})\in B_n\times\IR_+$. It is clear that $\Phi$ is a $*$-homomorphism.

Let $x_0\in B_n$, consider the Clifford operator $C^n_{x_0}\in C(B_n\times\IR_+,\Cl(H_n\oplus\IR))$ on $B_n$, i.e., $C_{x_0}^n(x,t)=(x-x_0,t)$ for each $(x,t)\in B_n\oplus\IR$. Then for any $y\in B$, we have
\begin{equation}\begin{split}\label{eq5}
(\Phi(C^n_{x_0}))(y,t)&=\tau^n_{Q_n(y),t}(C^n_{x_0}(P_n(y),(\|Q_n(y)\|_p^2+t^2)^{1/2}))\\
&=\tau^n_{Q_n(y),t}(\phi(P_n(y)-x_0),(\|Q_n(y)\|_p^2+t^2)^{1/2})\\
&=(\phi(P_n(y)-x_0)\oplus \phi(Q_n(y)),t).
\end{split}\end{equation}
As $\phi=\phi|_{B_n}\oplus\phi|_{B_n'}:B_n\oplus_pB_n'\to H_n\oplus H_n^{\bot}$, then $\phi(x)=\phi(x_1)+\phi(x_2)$ for any $x=(x_1,x_2)\in B_n\oplus B'_n=B$. Thus $\phi(P_n(y)-x_0)+\phi(Q_n(y))=\phi(P_n(y)+Q_n(y)-x_0)=\phi(y-x_0)$. Combining the equation \eqref{eq5}, we have that $\Phi(C_{x_0}^n)=C_{x_0}$ the Clifford operator of $B$. Write $t_y=\sqrt{\|Q_n(y)\|_p^2+t^2}$. Then for any $f\in C_0(\IR)$, we have that
\begin{equation}\begin{split}\label{eq6}
(\Phi(\beta_{x_0}^{n}(f)))(y,t)=&\tau_{Q_n(y),t}^n(f(C_{x_0}^n(P_n(y),t_y)))\\
=&\tau_{Q_n(y),t}^n\left(f_{ev}\left(\|C_{x_0}^n\left(P_n(y),t_y\right)\|\right)\right)+\\
&\tau_{Q_n(y),t}^n\left(f_{odd}\left(\|C_{x_0}^n\left(P_n(y),t_y\right)\|\right)\frac{C_{x_0}^{n}(P_n(y),t_y)}{\|C_{x_0}^{n}(P_n(y),t_y)\|}\right)\\
=&f_{ev}\left(\|C_{x_0}\left(y,t\right)\|\right)+f_{odd}\left(\|C_{x_0}\left(y,t\right)\|\right)\frac{\tau_{Q_n(y),t}^n(C_{x_0}^n(P_n(y),t_y))}{\|C_{x_0}(y,t)\|}\\
=&f_{ev}\left(\|C_{x_0}\left(y,t\right)\|\right)+f_{odd}\left(\|C_{x_0}\left(y,t\right)\|\right)\frac{C_{x_0}(y,t)}{\|C_{x_0}(y,t)\|}=\beta_{x_0}(f)(y,t)
\end{split}\end{equation}
This shows that $\Phi$ maps $\A(W_n)$ to $\A(B)$ and $\Phi(\A(W_n))$ is actually equal to $\A(B,W_n)$. This also shows that
$\Phi$ is the inverse of $\pi_n$.
\end{proof}

\begin{Thm}\label{K of A(B)}
For any $x_0\in B$, the Bott homomorphism 
$$\beta_{x_0}:C_0(\IR)\to\A(B)$$
induces an isomorphism on $K$-theory, i.e.,
$$(\beta_{x_0})_*:K_*(\S)\to K_*(\A(B))$$
is an isomorphism.
\end{Thm}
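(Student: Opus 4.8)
The plan is to reduce the theorem to the finite-dimensional Bott periodicity established above, by an inductive-limit argument. First I would dispose of the base point. For an arbitrary $x_0\in B$, consider the straight-line path $x_s=(1-s)x_0$, $s\in[0,1]$, joining $x_0$ to $0$. Since $s\mapsto x_s$ is continuous into $B$, Lemma \ref{cont. Bott}(1) shows that $s\mapsto\beta_{x_s}(f)$ is norm-continuous for every $f\in\S$; together with the uniform bound $\|\beta_{x_s}(f)\|\leq\|f\|$ this makes $(\beta_{x_s})_{s\in[0,1]}$ a homotopy of graded $*$-homomorphisms $\S\to\A(B)$ from $\beta_{x_0}$ to $\beta_0$ (the same device appears in the corollary to Theorem \ref{finite dimensional case}). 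By homotopy invariance of $K$-theory, $(\beta_{x_0})_*=(\beta_0)_*$, so it suffices to treat $x_0=0$.

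Next I would realize $\A(B)$ as an inductive limit. Put $B_n=\ell^p(N_n,\IR)$, so that $\{B_n\}_{n\in\IN}$ is an increasing sequence of finite-dimensional subspaces of $B$ with $\overline{\bigcup_n B_n}=B$. By the corollary to Lemma \ref{cont. Bott} (applied with $W_n=B_n$), $\A(B)=\lim_{n\to\infty}\A(B,B_n)$, the connecting maps being the inclusions $\iota_{m,n}\colon\A(B,B_m)\hookrightarrow\A(B,B_n)$ for $m\leq n$. Because $0\in B_n$ for every $n$, the homomorphism $\beta_0$ corestricts to a homomorphism $\beta_0^{(n)}\colon\S\to\A(B,B_n)$ with $\beta_0=\iota_n\circ\beta_0^{(n)}$, where $\iota_n\colon\A(B,B_n)\hookrightarrow\A(B)$; moreover these corestrictions are compatible with the connecting maps, $\iota_{m,n}\circ\beta_0^{(m)}=\beta_0^{(n)}$ for $m\leq n$.

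The key input is then that each $\beta_0^{(n)}$ induces an isomorphism on $K$-theory. By the lemma immediately preceding this theorem, the restriction homomorphism $\pi_n\colon\A(B,B_n)\to\A(B_n)$ is a $*$-isomorphism, and the computation in its proof gives $\pi_n\circ\beta_0^{(n)}=\beta_0^n$ (the finite-dimensional Bott map on $B_n$ at the base point $0$). By the corollary to Theorem \ref{finite dimensional case}, $(\beta_0^n)_*\colon K_*(\S)\to K_*(\A(B_n))$ is an isomorphism; hence $(\beta_0^{(n)})_*=(\pi_n)_*^{-1}\circ(\beta_0^n)_*$ is an isomorphism for every $n$.

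Finally I would pass to the limit. Applying continuity of $K$-theory to the second paragraph yields $K_*(\A(B))=\lim_{n\to\infty}K_*(\A(B,B_n))$. From $(\iota_{m,n})_*\circ(\beta_0^{(m)})_*=(\beta_0^{(n)})_*$ and the fact that both $\beta$-factors are isomorphisms, every connecting map $(\iota_{m,n})_*$ is an isomorphism, so each canonical map $j_n\colon K_*(\A(B,B_n))\to K_*(\A(B))$ is an isomorphism; therefore $(\beta_0)_*=j_n\circ(\beta_0^{(n)})_*$ is a composite of isomorphisms, which completes the proof. The argument is essentially bookkeeping on top of the results already established; the only place where genuine care is needed is the verification in the first step that $(\beta_{x_s})$ is a bona fide norm-continuous homotopy taking values in $\A(B)$, which is exactly the content of Lemma \ref{cont. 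Bott}.
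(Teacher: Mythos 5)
Your proposal is correct and follows essentially the same route as the paper: reduce to the base point $0$ by the homotopy from Lemma \ref{cont. Bott}, identify $\A(B)$ with $\lim_{n\to\infty}\A(B,B_n)\cong\lim_{n\to\infty}\A(B_n)$ via the restriction isomorphism $\pi_n$ (equivalently its inverse $\Phi$), invoke the finite-dimensional case (the corollary to Theorem \ref{finite dimensional case}), and conclude by continuity of $K$-theory. Your explicit bookkeeping with the corestrictions $\beta_0^{(n)}$ and the connecting maps is just a spelled-out version of the paper's commuting diagram.
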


\begin{proof}
Similarly, it suffices to prove the theorem for some fixed $x_0\in B$.

Viewing $B_n$ as a subspace of $B_{n+1}$, one can similarly define a map
$$\Phi_n:\A(B_n)\to\A(B_{n+1},B_n)\subseteq\A(B_{n+1}).$$
By compute on the generators, it is not hard to check that we have the following commuting diagram
$$\xymatrix{\S\ar[r]^{\beta^n_{x_0}}\ar[dr]_{\beta^{n+1}_{x_0}}&\A(B_n)\ar[d]_{\Phi_n}\ar[r]^{\Phi}&\A(B,B_n)\ar[d]\\
&\A(B_{n+1})\ar[r]^{\Phi}&\A(B,B_{n+1})}$$
where $\beta$ is defined as in \cite{HKT1998}. Combining the results we prove before, we have that
$$\A(B)=\lim_{n\to\infty}\A(B,B_n)\cong\lim_{n\to\infty}\A(B_n).$$
As the $K$-functor commutes with the direct limits (see \cite{HIT2020}), the theorem holds directly from Theorem \ref{finite dimensional case}.
\end{proof}

\section{$K$-homology at infinity and Roe algebra at infinity}

In this section, we will introduce $K$-homology at infinity and Roe algebra at infinity for a proper metric space.

The approach to $K$-homology via localization algebras is first introduced by G. Yu in \cite{Yu1997}. In this paper, we will also need to work with a version of localization algebras introduced by R. Willett and G. Yu in \cite{HIT2020}. For the convenience of the readers, we shall briefly recall its definition and its functoriality. Let $X$ be a proper metric space.

\begin{Def}[\cite{HIT2020}]\label{localization}
Let $H_X^+$ be an $X^+$-module, where $X^+$ is the one point compactification. Define $\IL[H_X^+;\infty]$ to be the collection of all bounded functions $(T_t)$ from $[1,\infty)$ to $\B(H_X^+)$ such that:
\begin{itemize}
\item[(1)]for any compact subset $K$ of $X$, there exists $t_K\geq 0$ such that for all $t\geq t_K$, the operators $\chi_KT_t$ and $T_t\chi_K$ are compact and the functions
$$t\mapsto \chi_KT_t\quad\mbox{and}\quad t\mapsto T_t\chi_K$$
are uniformly continuous when restricted to $[t_K,\infty)$;
\item[(2)]for any open neighbourhood $U$ of diagonal in $X^+\times X^+$, there exists $t_U\geq1$ such that for all $t>t_U$
$$\supp(T_t)\subseteq U.$$
\end{itemize}
Define $L^*(H_X^+;\infty)$, or simply $L^*(X^+;\infty)$, to be the $C^*$-algebra completion of $\IL[H_X^+;\infty]$ for the norm
$$\|(T_t)\|=\sup_{t}\|T_t\|_{\B(H_X)}.$$
\end{Def}

By \cite[Proposition 6.6.2]{HIT2020}, \cite[Theorem 3.2]{Yu1997} and \cite[Theorem 3.4]{QR2010}, we have the following result:

\begin{Thm}
The $K$-homology group $K_*(X)$ is isomorphic to the $K$-theory group $K_*(L^*(H_X^+;\infty))$, i.e.,
$$K_*(X)\cong K_*(L^*(H_X^+;\infty))$$
and $K_*(L^*(H_X^+;\infty))$ does not depend on the choice of ample modules up to the canonical equivalence.
\end{Thm}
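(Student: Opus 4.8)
The statement to be proved is the theorem identifying $K$-homology $K_*(X)$ with $K_*(L^*(H_X^+;\infty))$, asserted to follow from three cited results. Let me sketch a proof plan.

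The plan is to assemble the isomorphism from the three ingredients cited, in two stages: first identify $K$-homology with the $K$-theory of the \emph{standard} localization algebra $L^*(X)$ in the sense of Yu, and then identify the latter with $K_*(L^*(H_X^+;\infty))$ via a comparison of the two models of localization algebra. For the first stage, I would invoke \cite[Theorem 3.2]{Yu1997} (in the form refined by \cite[Theorem 3.4]{QR2010} for the non-compact/proper case), which gives a natural local index map $K_*(X)\to K_*(L^*(X))$ and shows it is an isomorphism; here one uses that $X$ is proper so that $C_0(X)$-modules and the associated Paschke-duality picture of $K$-homology are available, and that the module is ample so the construction is independent of the choice of module up to the canonical (non-canonical as operators, canonical on $K$-theory) equivalence.

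For the second stage, I would compare Willett--Yu's algebra $L^*(H_X^+;\infty)$ from Definition \ref{localization} with the usual localization algebra. The key point is that conditions (1) and (2) in Definition \ref{localization} encode exactly ``asymptotically locally compact'' and ``propagation tending to zero as $t\to\infty$'' but now measured on the \emph{one-point compactification} $X^+$; the reference \cite[Proposition 6.6.2]{HIT2020} supplies precisely the statement that this algebra computes $K_*(X)$ (equivalently, that it is, up to the relevant equivalences and a standard Eilenberg-swindle/reparametrization argument in the $[1,\infty)$ parameter, $K$-theory-equivalent to the Yu localization algebra). So the proof is essentially: cite \cite[Proposition 6.6.2]{HIT2020} to get $K_*(X)\cong K_*(L^*(H_X^+;\infty))$ directly, and cite \cite{Yu1997,QR2010} as the underlying justification that the localization-algebra approach does compute $K$-homology, while noting that the independence of ample module is part of those statements (or follows from the usual argument that an isometry intertwining two ample modules up to compacts induces the canonical identification on $K$-theory).

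The only real content beyond bookkeeping is checking that the hypotheses of the cited theorems are met in the present generality (proper metric space $X$, ample $X^+$-module $H_X^+$, the $[1,\infty)$-parametrized uniform-continuity conditions), and that the ``one-point compactification'' bookkeeping in Definition \ref{localization} matches the formulation in \cite{HIT2020}; the main obstacle, such as it is, is simply verifying that Definition \ref{localization} is literally the algebra appearing in \cite[Proposition 6.6.2]{HIT2020} so that the citation applies verbatim, after which the isomorphism and the module-independence are immediate. I would therefore present the proof as a short paragraph quoting the three results and remarking that together they give both the isomorphism and the stated independence of the choice of ample module up to canonical equivalence.
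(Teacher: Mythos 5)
Your proposal matches the paper's treatment: the paper offers no independent argument, stating the theorem as a direct consequence of \cite[Proposition 6.6.2]{HIT2020}, \cite[Theorem 3.2]{Yu1997} and \cite[Theorem 3.4]{QR2010}, which is exactly the assembly you describe. Your extra remarks on verifying that Definition \ref{localization} agrees with the algebra in \cite{HIT2020} and on module-independence are sound bookkeeping consistent with the paper's intent.
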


Let $H_X^+$ be a geometric module. Let $\IL_0[H_X^+;\infty]$ be the collection of all $(T_t)\in\IL[H_X^+;\infty]$ such that for any compact subset $K\subseteq X$ there exists $t_K\geq 0$ such that for all $t\geq t_K$,
$$\chi_KT_t=T_t\chi_K=0.$$
It is not difficult to see that $\IL_0[H_X^+;\infty]$ is a $*$-ideal in $\IL[H_X^+;\infty]$. Let $L^*_0(H_X^+;\infty)$ be the closure of $\IL_0[H_X^+;\infty]$ inside $L^*(H_X^+;\infty)$, let
$$L^*_Q(H_X^+;\infty)=L^*(H_X^+;\infty)/L^*_0(H_X^+;\infty)$$
be the corresponding quotient $C^*$-algebra. Using an Eilenberg swindle, we have the following result as in \cite[Lemma 6.4.11]{HIT2020}

\begin{Lem}
The quotient map $L^*(H_X^+;\infty)\to L^*_Q(H_X^+;\infty)$ induces an isomorphism on $K$-theory.\hfill$\square$
\end{Lem}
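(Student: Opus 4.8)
The plan is to implement a standard Eilenberg swindle at the level of the localization-at-infinity algebras, exactly as in \cite[Lemma 6.4.11]{HIT2020}. First I would note that since $K$-theory is a half-exact functor on the short exact sequence
$$0\to L^*_0(H_X^+;\infty)\to L^*(H_X^+;\infty)\to L^*_Q(H_X^+;\infty)\to 0,$$
it suffices to show that $K_*(L^*_0(H_X^+;\infty))=0$. To do this I would fix a decomposition of the geometric module $H_X^+$ into a countably infinite orthogonal direct sum of copies of itself, $H_X^+\cong\bigoplus_{k\in\IN}H_X^+$, chosen so that this decomposition is compatible with the $C_0(X^+)$-representation (each summand is again an ample $X^+$-module, and the isometry implementing the identification can be taken to have propagation zero in the relevant sense, i.e.\ to be $C_0(X)$-locally compatible with the diagonal). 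This is possible because $H_X^+$ is ample and of the standard form $\ell^2(Z)\otimes H$.

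Next I would define the swindle endomorphism. Given $(T_t)\in\IL_0[H_X^+;\infty]$, set $\Sigma(T_t)=\bigoplus_{k\in\IN}(T_t)$ acting on $\bigoplus_{k\in\IN}H_X^+\cong H_X^+$. The key points to verify are: (i) $\Sigma(T_t)$ is still a bounded function of $t$ with $\sup_t\|\Sigma(T_t)\|=\sup_t\|T_t\|$; (ii) the support-shrinking condition (2) of Definition \ref{localization} is preserved, because a neighbourhood $U$ of the diagonal in $X^+\times X^+$ pulls back to a neighbourhood of the diagonal in the summed module and $\supp(T_t)\subseteq U$ for $t>t_U$ forces $\supp(\Sigma(T_t))\subseteq U$ for the same $t_U$; and (iii) the "vanishing near compacta" condition defining $\IL_0$ is preserved with the same $t_K$, since $\chi_K T_t=T_t\chi_K=0$ for $t\geq t_K$ immediately gives $\chi_K\Sigma(T_t)=\Sigma(T_t)\chi_K=0$ for $t\geq t_K$. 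Hence $\Sigma$ extends to a $*$-homomorphism $L^*_0(H_X^+;\infty)\to L^*_0(H_X^+;\infty)$. A crucial observation is that $\Sigma$ is, up to the chosen unitary identification $\bigoplus_{k\in\IN}H_X^+\cong H_X^+$, unitarily equivalent to $\mathrm{id}\oplus\Sigma$ (shifting the first summand off to infinity), so on $K$-theory $\Sigma_*=\mathrm{id}_*+\Sigma_*$, forcing $\mathrm{id}_*=0$ on $K_*(L^*_0(H_X^+;\infty))$.

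Finally I would conclude: from $K_*(L^*_0(H_X^+;\infty))=0$ and half-exactness, the quotient map $L^*(H_X^+;\infty)\to L^*_Q(H_X^+;\infty)$ induces an isomorphism on $K$-theory. I expect the only genuinely delicate step to be verifying that the infinite ampliation $\Sigma$ really lands back in $\IL_0[H_X^+;\infty]$ — specifically, that the uniform continuity and local-compactness requirements inherited from $\IL[H_X^+;\infty]$ (condition (1) of Definition \ref{localization}) survive the infinite direct sum. This is handled by the observation that the defining property of $\IL_0$ makes $\chi_K\Sigma(T_t)$ eventually identically zero, so the continuity and compactness of $t\mapsto\chi_K\Sigma(T_t)$ hold trivially past $t_K$; it is precisely because we are working with the ideal $\IL_0$ rather than all of $\IL$ that the swindle does not run into the usual problems with infinite sums of non-compactly-supported operators. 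The identification $\bigoplus_{k\in\IN}H_X^+\cong H_X^+$ as $X^+$-modules, and the naturality of the $K$-theory isomorphism in the choice of such identification, is where I would cite the relevant uniqueness-of-ample-modules results already invoked in the excerpt.
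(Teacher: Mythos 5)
Your proposal is correct and follows exactly the route the paper takes: the paper disposes of this lemma by invoking the Eilenberg swindle argument of \cite[Lemma 6.4.11]{HIT2020}, i.e.\ showing $K_*(L^*_0(H_X^+;\infty))=0$ via an infinite ampliation compatible with the $C_0(X^+)$-representation and then applying the six-term exact sequence, which is precisely what you have written out (including the correct observation that membership in the ideal $\IL_0$ is what makes the ampliation harmless). No gaps to report.
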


To see the functoriality, we still need some preparations:

\begin{Def}
Let $K$ be a closed subspace of $X$, and let $H_X^+$ be an $X^+$-module. Define $\IL[H^+_X;K^+]$ to be the subset of $\IL[H_X^+;\infty]$ consisting of $(T_t)$ such that for any open subset $U$ of $X^+\times X^+$ that contains $K^+\times K^+$, there exists $t_U$ such that for all $t\geq t_U$
$$\supp(T_t)\subseteq U$$
Define $L^*(H^+_X;K^+)$, or simply $L^*(X^+;K^+)$, to be the closure of $\IL[H^+_X;K^+]$ inside $\IL[H_X^+;\infty]$.
\end{Def}

Similarly, we shall define $L^*_0(H_X^+;K^+)=L^*(H_X^+;K^+)\cap L^*_0(H_X^+;\infty)$ and
$$L^*_Q(H_X^+;K^+)=L^*(H_X^+;K^+)/L^*_0(H^+_X;K^+).$$
It is easy to show that the quotient map $L^*(H_X^+;K^+)\to L^*_Q(H_X^+;K^+)$ induces an isomorphism on the $K$-theory level. The following lemma has been proved in \cite[Lemma 6.3.6]{HIT2020}.

\begin{Lem}
The inclusion homomorphism from $L^*(H_K^+;\infty)$ to $L^*(H_X^+;K^+)$ induces an isomorphism from $K_*(L^*(H_K^+;\infty))$ to $K_*(L^*(H_X^+;K^+))$, i.e.,
$$K_*(K)\cong K_*(L^*(X^+;K^+))\cong K_*(L^*_Q(X^+;K^+)).$$\qed
\end{Lem}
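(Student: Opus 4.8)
The plan is to reduce to the known model case of localization algebras by a standard identification of supports, using that $K^+ \subseteq X^+$ is a closed subspace of the one-point compactification. The key observation is that $L^*(H_X^+;K^+)$ is, by its definition, the closed subalgebra of $L^*(H_X^+;\infty)$ consisting of operator families whose supports eventually concentrate near $K^+ \times K^+$, and that restricting such families to $H_{K}^+$ produces an element of $L^*(H_K^+;\infty)$. Conversely, any ample $X^+$-module decomposes (up to the canonical unitary equivalence used throughout localization-algebra theory, cf. \cite[Ch.~6]{HIT2020}) as $H_X^+ \cong H_K^+ \oplus H_{X\setminus K}^+$ in a way compatible with the respective representations of $C_0(X)$, so the inclusion $\iota\colon L^*(H_K^+;\infty)\to L^*(H_X^+;K^+)$ makes sense at the level of $*$-algebras of finite-support-type families.

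First I would set up the inclusion $\iota$ explicitly on the algebraic level: given $(T_t)\in\IL[H_K^+;\infty]$, extend each $T_t$ by zero on the complementary summand $H_{X\setminus K}^+$; the support condition for $\IL[H_X^+;K^+]$ is then automatic because $\supp(T_t)\subseteq K^+\times K^+$ for all large $t$, and the local-compactness/continuity conditions (item (1) of Definition \ref{localization}) pass over verbatim since $\chi_L T_t = \chi_{L\cap K} T_t$ for compact $L\subseteq X$. This shows $\iota$ is a well-defined $*$-homomorphism, and it descends to $\iota_Q\colon L^*_Q(H_K^+;\infty)\to L^*_Q(H_X^+;K^+)$. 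Next I would invoke the first isomorphism of the Theorem preceding this lemma, namely $K_*(K)\cong K_*(L^*(H_K^+;\infty))$, together with the Eilenberg-swindle lemmas already recorded in the excerpt, to reduce the claim to showing that $\iota$ (or $\iota_Q$) induces a $K$-theory isomorphism.

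For that, I would argue exactly as in \cite[Lemma 6.3.6]{HIT2020}. The strategy there is to show that the complementary data is $K$-theoretically invisible. Concretely: the quotient $L^*(H_X^+;K^+)/\iota\big(L^*(H_K^+;\infty)\big)$ — or better, a suitable ideal/quotient pair — can be absorbed by an Eilenberg swindle supported on $X^+\setminus K$, because any family in $\IL[H_X^+;K^+]$ differs from one supported in $H_K^+$ by a family whose support eventually lies in an arbitrarily small neighborhood of $K^+\times K^+$ but is ``transverse'' to the diagonal over $X\setminus K$, and such families form a contractible-in-$K$-theory piece. One makes this precise by a continuity/rescaling homotopy $(T_t)\mapsto (T_{\lambda t})$ combined with shrinking the neighborhood $U\supseteq K^+\times K^+$, exactly the two-parameter argument used for Lemma 6.3.6. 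This gives $\iota_*$ an inverse up to homotopy, hence $K_*(L^*(H_K^+;\infty))\xrightarrow{\cong}K_*(L^*(H_X^+;K^+))$, and the final isomorphism with $K_*(L^*_Q(X^+;K^+))$ is the Eilenberg-swindle lemma already stated.

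The main obstacle I anticipate is not the $K$-theory bookkeeping but verifying cleanly that the module decomposition $H_X^+\cong H_K^+\oplus H_{X\setminus K}^+$ is compatible with all three defining conditions of $\IL[\,\cdot\,;\infty]$ and with the support conditions defining the $K^+$-variant — in particular that ``$\supp(T_t)$ eventually inside every neighborhood of $K^+\times K^+$'' genuinely forces the family to be a compact perturbation of something living on $H_K^+$. Since $K^+$ need not be open and $X^+\setminus K^+$ need not be dense, one must be a little careful with the one-point compactification and with where the representation of $C_0(X)$ (as opposed to $C(X^+)$) sees the point at infinity; but this is precisely the situation handled in \cite[Lemma 6.3.6]{HIT2020}, so I would cite that and only spell out the adaptation to the ``$;\infty$'' conventions used here. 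Everything else is routine functoriality of $K$-theory under direct limits and exact sequences.
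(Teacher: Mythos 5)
Your proposal takes essentially the same route as the paper: the paper gives no independent argument for this lemma, but simply cites \cite[Lemma 6.3.6]{HIT2020} and combines it with the previously stated Eilenberg-swindle fact that the quotient map $L^*(H_X^+;K^+)\to L^*_Q(H_X^+;K^+)$ is a $K$-theory isomorphism, which is exactly the reduction you describe. Your extra sketch of the internal mechanics of the cited lemma is not required for (and not supplied by) the paper's proof, but the overall approach coincides.
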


Let $K\subset X$ be a close subset and $U=X\backslash K$, then we have the short sequence of topological spaces
$$K^+\stackrel{i}{\longrightarrow} X^+\stackrel{c}{\longrightarrow} U^+$$
where $i:K^+\to X^+$ is the inclusion and $c:X^+\to U^+$ is the collapse map which is the identity on $U$ and sends $X^+\backslash U$ to the point at infinity in $U^+$.

\begin{Pro}
With the notation above, there is a natural sic-term exact sequence
$$\xymatrix{K_0(K)\ar[r]^{i_*}&K_0(X)\ar[r]^{j^*}&K_0(U)\ar[d]\\K_1(U)\ar[u]&K_1(X)\ar[l]^{j^*}&K_1(K)\ar[l]^{i_*}}$$
induced by the short exact sequence of $C^*$-algebra
$$0\to L^*_Q(X^+;K^+)\stackrel{ad(V^i_t)}{\longrightarrow} L^*_Q(X^+;\infty)\stackrel{ad(V^c_t)}{\longrightarrow} L^*_Q(U^+;\infty)\to 0.$$
where $V^i_t$ and $V^c_t$ are the covering isometries for $i$ and $c$, respectively.
\end{Pro}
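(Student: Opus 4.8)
The plan is to produce the six-term sequence purely as the $K$-theory long exact sequence associated with a short exact sequence of $C^*$-algebras, and then identify the three terms via the lemmas already established. Concretely, I would first verify that the sequence
\[
0\longrightarrow L^*_Q(H_X^+;K^+)\xrightarrow{\;\mathrm{ad}(V^i_t)\;} L^*_Q(H_X^+;\infty)\xrightarrow{\;\mathrm{ad}(V^c_t)\;} L^*_Q(H_U^+;\infty)\longrightarrow 0
\]
is exact. For this I would fix ample $X^+$-, $K^+$- and $U^+$-modules and choose covering isometries $V^i_t\colon H_K^+\to H_X^+$ for the inclusion $i\colon K^+\hookrightarrow X^+$ and $V^c_t\colon H_X^+\to H_U^+$ for the collapse map $c\colon X^+\to U^+$, in the sense of the covering-isometry formalism of \cite{HIT2020}. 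The point is that $\mathrm{ad}(V^c_t)$ kills exactly those families supported, in the limit $t\to\infty$, near $K^+$ — that is, the image of $\mathrm{ad}(V^i_t)$ together with $L^*_0$ — so after passing to the quotients $L^*_Q$ the composition is zero and the sequence becomes exact. Injectivity on the left and surjectivity on the right are the standard facts that a covering isometry for an inclusion is (asymptotically) isometric modulo $L^*_0$, and that every family of operators on $H_U^+$ with controlled propagation can be pulled back along $V^c_t$.

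Next I would apply the $K$-theory six-term exact sequence to this short exact sequence of $C^*$-algebras, which yields a cyclic six-term sequence in $K_0$ and $K_1$ of the three quotient algebras $L^*_Q(H_X^+;K^+)$, $L^*_Q(H_X^+;\infty)$ and $L^*_Q(H_U^+;\infty)$. Then I invoke the identifications already in hand: $K_*(L^*_Q(H_X^+;\infty))\cong K_*(L^*(H_X^+;\infty))\cong K_*(X)$ (combining the Eilenberg-swindle lemma with the theorem that $K_*(X)\cong K_*(L^*(H_X^+;\infty))$), the analogous statement $K_*(L^*_Q(H_U^+;\infty))\cong K_*(U)$, and the lemma that $K_*(L^*_Q(H_X^+;K^+))\cong K_*(L^*(H_X^+;K^+))\cong K_*(L^*(H_K^+;\infty))\cong K_*(K)$. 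Substituting these, the six-term sequence becomes the asserted sequence with the maps labelled $i_*$ and $j^*$.

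Finally I would check that, under these identifications, the boundary-inclusion map in the $C^*$-algebra sequence really does correspond to the functorially induced map $i_*\colon K_*(K)\to K_*(X)$, and the quotient map to $j^*\colon K_*(X)\to K_*(U)$ (the restriction/pullback along the open inclusion $U\hookrightarrow X$). This is a naturality check: the covering isometry $V^i_t$ implements precisely the map on localization algebras that represents the inclusion of $K$ into $X$ at the level of $K$-homology, and $V^c_t$ implements the map classifying a proper map $X\to U$ in the non-proper / one-point-compactified setting; both compatibilities are built into the covering-isometry functoriality of \cite{HIT2020}, so this amounts to quoting that functoriality rather than re-deriving it.

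I expect the main obstacle to be precisely the exactness of the middle of the $C^*$-algebra sequence — i.e.\ showing $\ker(\mathrm{ad}(V^c_t)\ \mathrm{mod}\ L^*_0)$ equals the image of $\mathrm{ad}(V^i_t)$. The surjectivity and the left-injectivity are routine once one has chosen covering isometries carefully, but pinning down the kernel requires an approximation argument: given a family $(T_t)$ whose image under the collapse map becomes supported near the basepoint of $U^+$ as $t\to\infty$, one must show it can be rewritten, modulo $L^*_0(H_X^+;\infty)$, as a family supported near $K^+\times K^+$. This uses condition (2) in the definition of $\IL[H_X^+;\infty]$ (support shrinking toward the diagonal) together with the observation that a neighbourhood of the diagonal in $X^+\times X^+$ that is collapsed into a neighbourhood of the basepoint of $U^+$ is exactly a neighbourhood of $K^+\times K^+$; making the "modulo $L^*_0$" bookkeeping precise, with the uniform-continuity clauses, is the delicate part. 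Everything else reduces to citing Definition \ref{localization}, the swindle lemma, and \cite[Lemma 6.3.6]{HIT2020}.
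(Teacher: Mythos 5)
Your proposal is correct and follows essentially the same route as the paper: the paper also reduces everything to exactness of the displayed sequence of $C^*$-algebras (equivalently, to the isomorphism $L^*_Q(X^+;\infty)/L^*_Q(X^+;K^+)\cong L^*_Q(U^+;\infty)$, since $ad(V^i_t)$ is just the ideal inclusion), and then invokes the $K$-theory six-term sequence together with the previously established identifications $K_*(L^*_Q(X^+;K^+))\cong K_*(K)$, $K_*(L^*_Q(X^+;\infty))\cong K_*(X)$ and $K_*(L^*_Q(U^+;\infty))\cong K_*(U)$. The delicate point you flag (identifying the kernel/cokernel modulo $L^*_0$) is handled in the paper by exhibiting explicit mutually inverse $*$-homomorphisms, the inverse being $[T_t]\mapsto[[\chi(t)T_t\chi(t)]]$ for a time-dependent cutoff $\chi(t)$ subordinate to an exhaustion of $U$ by compact sets, which is precisely the approximation device your sketch would need to make the support and uniform-continuity bookkeeping work.
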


\begin{proof}
Let $d$ be a bounded metric function on $X^+$. Let $Z\subset X$ be a countable dense subset of $X$ whose intersection with $K$, $U$ is also dense in $K$, $U$ respectively for each $n\in\IN$. Let $H$ be a separable, infinite-dimensional Hilbert space, and define ample $X^+$, $K^+$ and $U^+$-module respectively by
$$H^+_X=\ell^2(Z,H)\oplus H,\, H^+_K=\ell^2(Z\cap K,H)\oplus H,\, H^+_U=\ell^2(Z\cap U,H)\oplus H$$

Define $v:\ell^2(Z\cap K,H)\to\ell^2(Z,H)$ to be the canonical inclusion. Let $u:\ell^2(Z\cap K,H)\oplus H\to H$ be an unitary operator and define
$$w=id_{\ell^2(Z\cap U,H)}\oplus u:H_X^+=\ell^2(Z\cap U,H)\oplus(\ell^2(Z\cap K,H)\oplus H)\to H_{U}^+$$
Define $V^i_t=v\oplus id_H$ and $V^c_t=w$. It is easy to see $V_t^i$ and $V_t^c$ are the covering isometries for $i$ and $c$, respectively.

Notice that $L^*_Q(X^+,K^+)$ is a two-side ideal of $L^*_Q(X^+,\infty)$ and $ad(V_t^i)$ is actually the inclusion. Thus it suffices to prove
$$\frac{L^*_Q(X^+,\infty)}{L^*_Q(X^+,K^+)}\cong L^*_Q(U^+,\infty).$$

Let $(C_n)$ be an increasing sequence of compact subsets of $U$ whose union is all of $U$. Let $\chi:\IR_+\to B(U)$ be a continuous map from $\IR_+$ to the set of all bounded Borel functions on $U$ defined by
$$\chi(t)=(n+1-t)\chi_{C_n}+(t-n)\chi_{C_{n+1}}$$
if $t\in[n,n+1)$, where $\chi_{C_n}$ is the characteristic function of $C_n$. Provisionally define maps
$$\Phi:\frac{L^*_Q(X^+,\infty)}{L^*_Q(X^+,K^+)}\to L^*_Q(U^+,\infty),\quad [[T_t]]\mapsto [ad(V^c_t)(T_t)]$$
and
$$\Psi:L^*_Q(U^+,\infty)\to \frac{L^*_Q(X^+,\infty)}{L^*_Q(X^+,K^+)},\quad [T_t]\mapsto [[\chi(t)T_t\chi(t)]],$$
where $\chi(t)T_t\chi(t)$ can be view as a operator on $\B(H_X^+)$ by viewing $\ell^2(Z\cap U,H)$ as a subspace of $H^+_X$ for each $t\in\IR_+$. It is easy to see $ad(V_t^c)(T_t)$ and $\chi(t)T_t\chi(t)$ satisfy the conditions in Definition \ref{localization}.

First, we show that $\Phi$ and $\Psi$ are well-defined $*$-homomorphisms. Let $[T_t]\in L^*_Q(X^+,K^+)$. For any compact subset $C\subset U$, we have that $d(C,K^+)>0$. Take $\varepsilon<d(C,K^+)$, there exists $T>0$ such that
$$\supp(T_t)\subset\{(x,y)\in X\times X\mid d(x,K^+)<\varepsilon,d(y,K^+)<\varepsilon\},$$
for all $t>T$. Then $\chi_C(ad(V^c_t)(T_t))=\chi_CT_t=0$ for all $t>T$, which means that $ad(V_t)(T_t)\in L^*_0(U^+,\infty)$. Therefore, $\Phi$ is a well-defined map $*$-homomorphism.

Similarly, one can show $\Psi$ is also well-defined. Indeed, let $T_t\in L^*_0(U^+,\infty)$. For any $\varepsilon>0$, denote $\N_{\varepsilon}(K^+)=\{x\in X^+\mid d(x,K^+)<\varepsilon\}$. Then $C=X^+\backslash\N_{\varepsilon}(K^+)\subset U$ is a compact subset of $U$. By definition, there exists $T>0$ such that $\chi_CT_t=T_t\chi_C=0$ for all $t>T$. Thus, for all $t>T$, we have that
$$\supp(\chi(t)T_t\chi(t))\subset \N_{\varepsilon}(K^+)\times\N_{\varepsilon}(K^+),$$
i.e., $\chi(t)T_t\chi(t)\in L^*_Q(X^+,K^+)$. By \cite[Lemma 6.1.2]{HIT2020}, the multiplier of $L^*_Q(U^+,\infty)$ defined by $\chi(t)$ is central. Thus $\Psi$ is a well-defined $*$-homomorphism.

At last, we show that $\Psi$ induces a mutually inverse of $\Phi$. Computing, for $[T_t]\in L^*_Q(U^+,\infty)$
$$\Phi(\Psi([T_t]))=[ad(V_t^c)(\chi(t)T_t\chi(t))]=[\chi(t)T_t\chi(t)],$$
here we view $\chi(t)T_t\chi(t)$ as an operator on $H^+_U$ for each $t\in\IR_+$. Notice that for any compact subset $C\subset U$,
$$\chi_C(T_t-\chi(t)T_t\chi(t))=\chi_CT_t-\chi(t)\chi_CT_t\chi(t).$$
As $\lim_{t\to\infty}\prop(T_t)=0$ and $U=\bigcup_{n\in\IN}C_n$, there exists $T>0$ and a compact subset $C'\subset U$ such that $\chi_C\chi(t)=\chi_C$ and $\chi_CT_t=\chi_CT_t\chi_{C'}$ for all $t>T$. Thus $\chi_C(T_t-\chi(t)T_t\chi(t))=0$ for all $t>T$, i.e., $[T_t]=[\chi(t)T_t\chi(t)]$.

On the other hand, for $[[T_t]]\in\frac{L^*_Q(X^+,\infty)}{L^*_Q(X^+,K^+)}$,
$$\Psi(\Phi([[T_t]]))=[[\chi(t)ad(V^c_t)(T_t)\chi(t)]]=[\chi(t)T_t\chi(t)],$$
here we view $\chi(t)T_t\chi(t)$ as an operator on $H^+_X$ for each $t\in\IR_+$. For any $\varepsilon>0$, let $C=X^+\backslash\N_{\varepsilon}(K^+)\subset U$ be a compact subset of $U$. Then by a similar argument above
$$\chi_C(T_t-\chi(t)T_t\chi(t))=0$$
for sufficiently large $t$, this completes the proof.
\end{proof}

Let $X$ be a unbounded proper metric space. Fix a based point $x_0\in X$, and let $K_R=\{x\in X\mid d(x,x_0)\leq R\}$. Denoted by $U_R=X\backslash K_R$. Thus we have the following commuting diagram by \cite[Proposition B.2.3]{HIT2020}:
$$\xymatrix{K_0(K_R)\ar[r]^{i_*}&K_0(X)\ar[r]^{j^*}&K_0(U_R)\ar[d]\\K_1(U_R)\ar[u]&K_1(X)\ar[l]^{j^*}&K_1(K_R)\ar[l]^{i_*}}$$
where $i_*$ is induced by the natural inclusion $i:K_R\to X$ and $j^*$ is induced by the collapse $K_R$ to $\infty$ map $c:X^+\to U_R^+$ .

If $R<R'\in\IR$, we denote $i_{RR'}:K_R\to K_{R'}$ and $j_{RR'}:U_{R'}\to U_R$ the inclusion map. Let $c_{RR'}:U^+_R\to U^+_{R'}$ be the collapse map that is identity on $U$ and sends $U^+\backslash U_{R'}$ to the point at infinity in $U^+$. Then we have the following commuting diagram
\begin{equation}\label{CD1}\xymatrix{K_R\sqcup\{\infty\}\ar[r]^{i}\ar[d]^{i_{RR'}}&X^+\ar[r]^{c}\ar@{=}[d]&U_R^+\ar[d]^{c_{RR'}}\\
K_{R'}\sqcup\{\infty\}\ar[r]^{i}\ar[d]&X^+\ar[r]^{c}\ar@{=}[d]&U_R^+\ar[d]\\\vdots&\vdots&\vdots}\end{equation}
Taking the direct limit as $R$ tends to infinity, we get a commutative diagram
$$\xymatrix{\lim\limits_{R\to\infty}K_0(K_R)\ar[r]^{i_*}&K_0(X)\ar[r]^{j^*}&\lim\limits_{R\to\infty}K_0(U_R)\ar[d]\\\lim\limits_{R\to\infty}K_1(U_R)\ar[u]&K_1(X)\ar[l]^{j^*}&\lim\limits_{R\to\infty}K_1(K_R)\ar[l]^{i_*}}$$

\begin{Def}
The $K$-homology at infinity of $X$, denote by $K_*^{\infty}(X)$, is defined to be the group $\lim\limits_{R\to\infty}K_*(U_R)$.
\end{Def}

For any locally compact, secondly countable, Hausdorff space $X$, we can define a unbounded proper metric on $X$. We can still define the $K$-homology at infinity for $X$ according to this metric. By the definition, we can see that $K^{\infty}_*(X)$ does not depend on the choice of the metric. Moreover, if $X$ is compact, then $K^{\infty}_*(X)$ is zero group. Recall the definition of the representable $K$-homology:

\begin{Def}[\cite{HIT2020}]
Let $H_X$ be an $X$ module. Define $R\IL[H_X]$ to be the collection of all elements $(T_t)$ of $\IL[H^+_X,\infty]$ such that there exists a compact subset $K$ of $X$ and $t_K\geq 1$ such that
$$T_t=\chi_KT_t\chi_K$$
for all $t\geq t_K$. Define $RL^*(H_X)$ to be the completion of $R\IL[H_X]$ for the norm
$$\|(T_t)\|=\sup_{t}\|T_t\|_{\B(H_X)}.$$

The representable $K$-homology of $X$, denoted by $RK_*(X)$, is defined to be the $K$-theory group
$$RK_*(X):=K_*(RL^*(H_X))$$
\end{Def}

\begin{Rem}
It is obvious that $RL^*(H_X)$ is a closed ideal of $L^*(H_X^+,\infty)$. Notice that $\lim\limits_{R\to\infty}K_*(K_R)$ is actually the representable $K$-homology \cite[Proposition 9.4.7]{HIT2020}. Thus the diagram can be rewrite as following:
$$\xymatrix{RK_0(X)\ar[r]^{i_*}&K_0(X)\ar[r]^{j^*}&K^{\infty}_0(X)\ar[d]\\K^{\infty}_1(X)\ar[u]&K_1(X)\ar[l]^{j^*}&RK_1(X)\ar[l]^{i_*}.}$$
\end{Rem}

Let $H_X$ be an ample $X$-module, $L^*(H_X^+,\infty)$ and $RL^*(H_X)$ be as above. Define $L^*_{\infty}(H_X)$ to be the quotient algebra $L^*(H_X^+,\infty)/RL^*(H_X)$.

\begin{Pro}
The $K$-homology at infinity $K_*^{\infty}(X)$ is isomorphic to the $K$-theory group of $L^*_{\infty}(H_X)$, i.e.,
$$K_*^{\infty}(X)\cong K_*(L^*_{\infty}(H_X)).$$
\end{Pro}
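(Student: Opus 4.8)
The plan is to compare the defining short exact sequence of $L^*_{\infty}(H_X)$ with the direct limit, over $R$, of the short exact sequences of the preceding Proposition, and to conclude by naturality of the $K$-theory six-term sequence together with the five lemma. Write $RL^*_Q:=\lim\limits_{R\to\infty}L^*_Q(X^+;K_R^+)$ and $L^*_{Q,\infty}:=\lim\limits_{R\to\infty}L^*_Q(U_R^+;\infty)$, the connecting maps being those induced by the inclusions $K_R\hookrightarrow K_{R'}$ and the collapse maps $c_{RR'}\colon U_R^+\to U_{R'}^+$ of diagram \eqref{CD1}. Since direct limits are exact for $C^*$-algebras, taking the direct limit of the short exact sequences
$$0\to L^*_Q(X^+;K_R^+)\to L^*_Q(X^+;\infty)\to L^*_Q(U_R^+;\infty)\to 0,$$
whose middle term does not depend on $R$, gives a short exact sequence $0\to RL^*_Q\to L^*_Q(X^+;\infty)\to L^*_{Q,\infty}\to 0$. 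As $K$-theory commutes with direct limits, $K_*(L^*_{Q,\infty})=\lim\limits_{R\to\infty}K_*(L^*_Q(U_R^+;\infty))=\lim\limits_{R\to\infty}K_*(U_R)=K^{\infty}_*(X)$, while $K_*(RL^*_Q)=\lim\limits_{R\to\infty}K_*(K_R)=RK_*(X)$ and $K_*(L^*_Q(X^+;\infty))=K_*(X)$; the associated six-term sequence is exactly the one displayed in the Remark. So it suffices to produce a $*$-homomorphism $\Theta\colon L^*_{\infty}(H_X)\to L^*_{Q,\infty}$ which is an isomorphism on $K$-theory.

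To construct $\Theta$, let $q\colon L^*(H_X^+;\infty)\to L^*_Q(X^+;\infty)=L^*(H_X^+;\infty)/L^*_0(H_X^+;\infty)$ be the quotient map, which induces an isomorphism on $K$-theory by the Lemma above. I first observe $q(RL^*(H_X))\subseteq RL^*_Q$: if $(T_t)\in R\IL[H_X]$ is eventually supported in $K_R\times K_R$, then $(T_t)\in\IL[H_X^+;K_R^+]$, and $q$ carries $L^*(H_X^+;K_R^+)$ into $L^*_Q(X^+;K_R^+)\subseteq L^*_Q(X^+;\infty)$ by definition of the latter as $L^*(H_X^+;K_R^+)/L^*_0(H_X^+;K_R^+)$; since such operators are dense in $RL^*(H_X)$, the inclusion follows. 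As $RL^*_Q$ is the kernel of $L^*_Q(X^+;\infty)\to L^*_{Q,\infty}$, the map $q$ descends to a $*$-homomorphism $\Theta$ on $L^*_{\infty}(H_X)=L^*(H_X^+;\infty)/RL^*(H_X)$, yielding a commuting morphism of short exact sequences
$$\xymatrix{0\ar[r]&RL^*(H_X)\ar[r]\ar[d]_{q}&L^*(H_X^+;\infty)\ar[r]\ar[d]_{q}&L^*_{\infty}(H_X)\ar[r]\ar[d]^{\Theta}&0\\0\ar[r]&RL^*_Q\ar[r]&L^*_Q(X^+;\infty)\ar[r]&L^*_{Q,\infty}\ar[r]&0.}$$

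It then remains to check that the two outer vertical maps are $K$-theory isomorphisms, after which the five lemma applied to the two six-term exact sequences forces $\Theta_*\colon K_*(L^*_{\infty}(H_X))\xrightarrow{\cong}K_*(L^*_{Q,\infty})=K^{\infty}_*(X)$, which is the assertion. The middle vertical map is a $K$-theory isomorphism since it is $q$. For the left vertical map, one writes $RL^*(H_X)=\overline{\bigcup_R RL^*_R}$, where $RL^*_R$ is the closure in $L^*(H_X^+;\infty)$ of the set of $(T_t)\in\IL[H_X^+;\infty]$ that are eventually supported in $K_R\times K_R$ (any compact subset of $X$ lies in some $K_R$). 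Each $q$ maps $RL^*_R$ into $L^*_Q(X^+;K_R^+)$, and $\overline{\bigcup_R q(RL^*_R)}=RL^*_Q$; moreover $q|_{RL^*_R}$ is a $K$-theory isomorphism onto $L^*_Q(X^+;K_R^+)$ via the identifications $K_*(L^*(H_{K_R}^+;\infty))\cong K_*(K_R)\cong K_*(L^*_Q(X^+;K_R^+))$ from the Lemmas above. Passing to the direct limit in $R$, and again using that $K$-theory commutes with direct limits, shows $q\colon RL^*(H_X)\to RL^*_Q$ is a $K$-theory isomorphism.

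The main obstacle is precisely this last step: identifying $RL^*(H_X)$ with the closed union of its ``eventually compactly supported'' pieces $RL^*_R$ and matching each such piece, on $K$-theory, with $L^*_Q(X^+;K_R^+)$. The delicate point is that a neighbourhood of $K_R^+\times K_R^+$ in $X^+\times X^+$ necessarily contains regions near the point at infinity of $X^+$, so operators in $L^*(H_X^+;K_R^+)$ can spread out towards infinity; this becomes harmless only after quotienting by $L^*_0(H_X^+;\infty)$, and one must argue that modulo $L^*_0$ every such operator is approximated by operators eventually supported in some $K_{R'}\times K_{R'}$. Once this is in hand, the remainder of the argument is bookkeeping with the functoriality lemmas already established.
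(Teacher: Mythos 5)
Your overall architecture is essentially the paper's: both arguments take the short exact sequences $0\to L^*_Q(X^+;K_R^+)\to L^*_Q(X^+;\infty)\to L^*_Q(U_R^+;\infty)\to 0$ from the preceding Proposition, pass to the direct limit over $R$ (the paper does this with explicit covering isometries $(V_t)_{nn'}$, $(W_t)_{nn'}$), use that $K$-theory commutes with direct limits to identify the limit of the quotients with $K^\infty_*(X)$, and finish with a Five Lemma comparison of two short exact sequences with the same first two terms. The only formal difference is the direction of the comparison map: the paper shows the limit map $J$ kills the representable part and so descends to $J_\infty\colon L^*_{\infty,Q}(X^+;\infty)\to\lim_n L^*_Q(U_n^+;\infty)$, whereas you build $\Theta$ out of $L^*_\infty(H_X)$; this is immaterial.

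However, the step you flag at the end is a genuine gap as the proposal stands, and it is the real content of the proof, not bookkeeping. You need $q\colon RL^*(H_X)\to\lim_R L^*_Q(X^+;K_R^+)$ to be a $K$-theory isomorphism, and the asserted piecewise identification ``$q|_{RL^*_R}$ is a $K$-theory isomorphism onto $L^*_Q(X^+;K_R^+)$'' does not follow from the quoted Lemmas: they compare $L^*(H^+_{K_R};\infty)$ with $L^*(H_X^+;K_R^+)$, while $RL^*_R$ lives on the module $H_X^+$ (so a covering-isometry argument is needed even for the domain), and surjectivity on $K$-theory is exactly the ``spreading towards infinity'' problem you describe. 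The gap can be closed, and this is in effect what the paper relies on (after taking the algebra-level direct limit it invokes Proposition 9.4.7 of \cite{HIT2020} to identify the kernel with the representable part). Concretely: if $(T_t)\in\IL[H_X^+;K_R^+]$ and $\varepsilon>0$, then $T_t-\chi_{N_\varepsilon(K_R)}T_t\chi_{N_\varepsilon(K_R)}$ lies in $\IL_0[H_X^+;\infty]$; indeed, for a fixed compact $K\subseteq X$, combining the eventual support condition near $K_R^+\times K_R^+$ with the diagonal support condition (2) of Definition \ref{localization} shows that eventually no point of $\supp(T_t)$ has a coordinate in $K$ unless both coordinates lie in $N_\varepsilon(K_R)$, so the relevant compressions by $\chi_K$ vanish for large $t$. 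Hence, modulo $L^*_0$, every element of $L^*(H_X^+;K_R^+)$ is represented by one eventually supported in $K_{R+\varepsilon}\times K_{R+\varepsilon}$; consequently $q(RL^*(H_X))$ is dense in, hence equal to (images of $*$-homomorphisms being closed), $\lim_R L^*_Q(X^+;K_R^+)$, its kernel is $RL^*(H_X)\cap L^*_0(H_X^+;\infty)$, and an Eilenberg swindle as in the earlier Lemma shows this kernel is invisible to $K$-theory. With that inserted, your proof is correct and coincides with the paper's.
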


\begin{proof}
Let $Z\subset X$ be a countable dense subset of $X$ whose intersection with $K_n$, $U_n$ is also dense in $K_n$, $U_n$ respectively for each $n\in\IN$. Let $H$ be a separable, infinite-dimensional Hilbert space, and define ample $X^+$, $K_n^+$ and $U^+_n$-module respectively by
$$H^+_X=\ell^2(Z,H)\oplus H,\, H^+_{K_n}=\ell^2(Z\cap K_n,H)\oplus H,\, H^+_{U_n}=\ell^2(Z\cap U_n,H)\oplus H$$

For each $n,n'\in\IN$, assume that $n<n'$, define $v_n:\ell^2(Z\cap K_n,H)\to\ell^2(Z,H)$ and $v_{nn'}:\ell^2(Z\cap K_n,H)\to\ell^2(Z\cap K_{n}',H)$ to be the canonical inclusion. Let $u_n:\ell^2(Z\cap K_n,H)\oplus H\to H$ and $u_{nn'}:\ell^2(U_{n}\cap K_{n'},H)\oplus H\to H$ be unitaries and define
$$w_n=id_{\ell^2(Z\cap U_n,H)}\oplus u_n:H_X^+=\ell^2(Z\cap U_n,H)\oplus(\ell^2(Z\cap K_n,H)\oplus H)\to H_{U_n}^+$$
and
$$w_{nn'}=id_{\ell^2(Z\cap U_{n'},H)}\oplus u_{nn'}:H_{U_n}^+=\ell^2(Z\cap U_{n'},H)\oplus(\ell^2(U_{n}\cap K_{n'},H))\to H_{U_{n'}}^+$$
Define $(V_t)_n=v_n$, $(V_t)_{nn'}=v_{nn'}$, $(W_t)_n=w_n$ and $(W_t)_{nn'}=w_{nn'}$. Then we have the following commuting diagram:
$$\xymatrix{0\ar[r]&L^*_Q(X^+,K^+_n)\ar[rr]^{ad((V_t)_n)}\ar[d]^{ad((V_t)_{nn'})}&&L^*_Q(X^+,\infty)\ar[rr]^{ad((W_t)_{n})}\ar@{=}[d]&&L^*_Q(U_n^+,\infty)\ar[d]^{ad((W_t)_{nn'})}\ar[r]&0\\
0\ar[r]&L^*_Q(X^+;K^+_{n'})\ar[rr]^{ad((V_t)_{n'})}\ar[d]&&L^*_Q(X^+;\infty)\ar[rr]^{ad((W_t)_{n'})}\ar@{=}[d]&&L^*_Q(U_{n'}^+;\infty)\ar[d]\ar[r]&0\\&\vdots&&\vdots&&\vdots&}$$
Taking the direct limit as $n$ tends to infinity, by \cite[Proposition 9.4.7]{HIT2020}, we have that
$$0\to RL^*_Q(X^+;\infty)\to L^*_Q(X^+;\infty)\stackrel{J}{\longrightarrow}\lim\limits_{n\to\infty}L^*_Q(U^+_n;\infty)\to0.$$
We denote the map
$$J=\lim_{n\to\infty}ad((W_t)_n):L^*_Q(X^+;\infty)\to\lim\limits_{n\to\infty}L^*_Q(U^+_n;\infty).$$
For any $T_t\in R\IL[X^+;\infty]$, there exists a compact subset $K\subset X$ and $t_K>1$ such that $T_t=\chi_KT_t\chi_K$ for all $t\geq t_K$. Set $N\in\IN$ such that $K\subset K_N$, then $[ad((W_t)_N)(T_t)]=0\in L^*_Q(U_N^+,\infty)$. Thus $J([T_t])=0$ for all $[T_t]\in RL^*_Q(X^+,\infty)$. Then $J$ descends to a map on the quotient:
$$J_{\infty}:L^*_{\infty,Q}(X^+;\infty)\to\lim\limits_{n\to\infty}L^*_Q(U^+_n;\infty).$$
Thus we have the following commuting diagram
$$\xymatrix{0\ar[r]&RL^*_Q(X^+;\infty)\ar[rr]\ar@{=}[d]&&L^*_Q(X^+;\infty)\ar[rr]\ar@{=}[d]&&L^*_{\infty,Q}(X^+;\infty)\ar[d]^{J_{\infty}}\ar[r]&0\\
0\ar[r]&RL^*_Q(X^+;\infty)\ar[rr]&&L^*_Q(X^+;\infty)\ar[rr]&&\lim\limits_{n\to\infty}L^*_Q(U^+_n;\infty)\ar[r]&0}$$
Thus $(J_{\infty})_*:K^{\infty}_*(X)\to K_*(L^*_{\infty}(H_X))$ is an isomorphism by Five Lemma, as desire.
\end{proof}

For technical convenience, we will consider the following version of localization algebra and representable localization algebra (c.f. \cite{Yu1997,Yu2000}). Let $H_0$ be an infinite dimensional Hilbert space, and $Z\subseteq X$ be a countable dense subset and $\IC_f[X]$ be as in Definition \ref{matrix Roe algebra}.

\begin{Def}[\cite{Yu1997}]
The algebraic localization algebra, denoted by $\IC_L[X]$, is the $*$-algebra of all uniformly bounded and uniformly continuous functions
$$g:[0,\infty)\to\IC_f[X]\subseteq\IC[X]$$
such that the family $(g(t))_{t\in[0,\infty)}$ satisfy the conditions in Definition \ref{matrix Roe algebra} with uniform constants and there exists a bounded function $R(t):[0,\infty)\to[0,\infty)$ with $\lim_{t\to\infty}R(t)=0$ such that
$$(g(t))(x,y)=0\mbox{ whenever }d(x.y)>R(t)$$
for all $x,y\in Z$ and $t\in[0,\infty)$.

Define $C^*_L(X)$ to be the completion of $\IC_L[X]$ with respect to the norm
$$\|g\|=\sup_{t\in\IR_+}\|g(t)\|.$$
\end{Def}

\begin{Thm}[\cite{Yu1997,QR2010}]
Suppose that $X$ is a proper metric space with bounded geometry, then there exists a local index map $\mu_L:K_*(P_d(X))\to K_*(C^*_L(P_d(X)))$ which is an isomorphism.
\end{Thm}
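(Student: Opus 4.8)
The plan is to identify the $K$-theory of the matrix localization algebra $C^*_L(P_d(X))$ with the $K$-theory of the Willett--Yu localization algebra $L^*(H^+_{P_d(X)};\infty)$ of Definition \ref{localization}, and then to feed in the isomorphism $K_*(P_d(X))\cong K_*(L^*(H^+_{P_d(X)};\infty))$ already recorded above. First I would describe the local index map explicitly, in parallel with the construction of the coarse index recalled earlier: given a cycle $(H,F)$ for $K_*(P_d(X))$, choose for each $t\in[1,\infty)$ a locally finite, uniformly bounded open cover of $P_d(X)$ by sets of diameter at most $1/t$ together with a subordinate continuous partition of unity $\{\phi_{i,t}\}$, and set $F_t=\sum_i\phi_{i,t}^{1/2}F\phi_{i,t}^{1/2}$, the sum converging in the strong operator topology. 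Then $t\mapsto F_t$ is uniformly bounded and uniformly continuous, has propagation $R(t)\to 0$, and $F_t$ is a multiplier of $C^*_L(P_d(X))$ which is invertible modulo $C^*_L(P_d(X))$; applying the boundary map of $K$-theory to $[F_t]$ yields $\mu_L([H,F])\in K_*(C^*_L(P_d(X)))$.

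For the isomorphism statement I would argue by comparison rather than directly. After choosing compatible ample modules, the condition $R(t)\to 0$ forces, for every open neighbourhood $U$ of the diagonal in $P_d(X)\times P_d(X)$, that $\supp(F_t)\subseteq U$ for all large $t$, while local compactness and uniform boundedness give the compactness and uniform continuity of $\chi_K F_t$ and $F_t\chi_K$ for $t$ large; hence $\IC_L[P_d(X)]$ maps into $\IL[H^+_{P_d(X)};\infty]$, and one checks that $\mu_L$ corresponds under this map to the isomorphism $K_*(P_d(X))\cong K_*(L^*(H^+_{P_d(X)};\infty))$. The content of the theorem is then that this comparison map is a $K$-theory isomorphism; this is precisely \cite[Theorem 3.2]{Yu1997} and \cite[Theorem 3.4]{QR2010}, and is also contained in the treatment of localization algebras in \cite{HIT2020}, so in the paper I would state the theorem with these references and only indicate the comparison.

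The main obstacle is exactly the verification that the two flavours of localization algebra have the same $K$-theory, since the condition ``$R(t)\to 0$'' in $\IC_L$ and the condition ``$\supp(T_t)$ eventually inside every neighbourhood of the diagonal of $X^+\times X^+$'' in $\IL[H^+_X;\infty]$ genuinely differ near the point at infinity of an unbounded space. The standard route, which I would follow if giving a self-contained argument, is an induction on skeleta of the simplicial complex $P_d(X)$: both functors agree with $K$-homology on a point (more generally on a simplex), both are homotopy invariant, and both satisfy a Mayer--Vietoris sequence with respect to decompositions of $P_d(X)$ into subcomplexes; because $X$ has bounded geometry, the open covers and partitions of unity entering the index map can be taken with uniformly controlled combinatorics, so the decompositions behave well, and the Five Lemma then propagates the isomorphism from the $0$-skeleton up to all of $P_d(X)$. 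No further direct limit over $d$ is needed here, since the statement is for a fixed scale $d$.
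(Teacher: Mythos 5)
Your proposal is correct and follows essentially the route the paper intends: the paper gives no argument of its own here, stating the theorem with the citations to Yu's localization-algebra paper and Qiao--Roe, and your outline (the partition-of-unity construction of $\mu_L$, the comparison of $\IC_L[P_d(X)]$ with $\IL[H^+_{P_d(X)};\infty]$, and the strong-Lipschitz-homotopy-invariance plus Mayer--Vietoris induction on skeleta) is exactly the standard argument contained in those references and in the treatment the paper cites. The only point to polish is that uniform continuity of $t\mapsto F_t$ requires choosing the partitions of unity to vary continuously in $t$ (e.g.\ by interpolating between covers at integer times), which is routine.
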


Consequently, if $X$ is a discrete metric space with bounded geometry, we have the following commuting diagram:
$$\xymatrix{&&\lim\limits_{d\to\infty}K_*(P_d(X))\ar[d]^{\mu}\ar[dll]_{\mu_L}\\\lim\limits_{d\to\infty}K_*(C^*_L(P_d(X)))\ar[rr]^{ev_*}&&K_*(C^*(X)),}$$
where the evaluation homomorphism $ev:C^*_L(X)\to C^*(X)$ is defined by $ev(f(t))=f(0)$ for all $f\in C^*_L(X)$. Then, to proof the coarse Novikov conjecture for $X$, it suffices to prove the evaluation homomorphism
$$ev_*:\lim_{d\to\infty}K_*(C^*_L(P_d(X)))\to\lim_{d\to\infty}K_*(C^*(P_d(X)))$$
is an injection.

\begin{Def}
The representable localization algebra of $X$, denoted by $RC^*_L(X)$, is the closure of the collection of all elements $g(t)$ of $C^*_L(X)$ such that there exists a compact subset $K$ of $X$ such that
$$g(t)=\chi_Kg(t)\chi_K$$
for all $t\in[0,\infty)$. It is not hard to see that $RC^*_L(X)$ is a two-sided closed ideal of $C^*_L(X)$.
\end{Def}

The proof of the following lemma is similar with \cite[Proposition 9.4.2, Proposition 6.6.2]{HIT2020} and \cite[Theorem 3.2]{Yu1997}.

\begin{Lem}
Let $X$ be a proper metric space. Then we have
$$K_*(L^*(H_X^+,\infty))\cong K_*(C^*_L(X))\quad\mbox{and}\quad K_*(RL^*(H_X))\cong K_*(RC^*_L(X)).$$
As a consequence, the $K$-homoloogy at infinity $K^{\infty}_*(X)$ is isomorphic to $K_*(C^*_L(X)/RC^*_L(X))$.\hfill$\square$
\end{Lem}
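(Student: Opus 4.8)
The plan is to compare the operator-theoretic localization algebra $L^*(H_X^+;\infty)$ with Yu's matricial localization algebra $C^*_L(X)$ by exhibiting an explicit comparison homomorphism and then transporting the known identifications with $K$-homology. First I would fix a bounded metric on $X^+$ and an ample $X^+$-module of the form $H_X^+=H_X\oplus H$ with $C_0(X)$ acting through the first summand. After reparametrising $[0,\infty)\cong[1,\infty)$, an element $g\in\IC_L[X]$ becomes a uniformly bounded, uniformly continuous family of operators on $H_X^+$ whose values lie in $\IC_f[X]$; since each $g(t)\in\IC_f[X]$ has only finitely many nonzero matrix entries over any bounded set, each of them a compact operator on $H$, the products $\chi_Kg(t)$ and $g(t)\chi_K$ are compact and $t\mapsto\chi_Kg(t)$, $t\mapsto g(t)\chi_K$ are uniformly continuous for every compact $K\subseteq X$ --- this gives condition (1) of Definition \ref{localization} --- while the decay $R(t)\to0$ of the propagation, for a suitable choice of the metric on $X^+$, gives condition (2). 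Taking closures yields a $*$-homomorphism $\iota\colon C^*_L(X)\to L^*(H_X^+;\infty)$, and because $\chi_Kg(t)\chi_K$ with $K$ compact in $X$ is supported away from the point at infinity, $\iota$ restricts to a $*$-homomorphism $RC^*_L(X)\to RL^*(H_X)$.

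Next I would show that $\iota$ and its restriction are isomorphisms on $K$-theory by reducing to computations already in the literature. By \cite[Theorem 3.2]{Yu1997} the local index map identifies $K_*(X)$ with $K_*(C^*_L(X))$, and by \cite[Proposition 6.6.2]{HIT2020} it identifies $K_*(X)$ with $K_*(L^*(H_X^+;\infty))$; both are obtained by the same procedure --- shrinking the propagation of a fixed Kasparov cycle by means of partitions of unity of vanishing mesh --- so once the ample module is chosen compatibly the comparison homomorphism $\iota$ intertwines them, and hence $\iota_*$ is an isomorphism. Restricting the same argument to a compact exhaustion of $X$ shows that $\iota$ also implements the isomorphism $RK_*(X)\cong K_*(RC^*_L(X))\xrightarrow{\cong}K_*(RL^*(H_X))$ of \cite[Proposition 9.4.2]{HIT2020}.

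For the final assertion, $\iota$ is a morphism of the short exact sequences
$$\xymatrix{
0\ar[r]&RC^*_L(X)\ar[r]\ar[d]^{\iota}&C^*_L(X)\ar[r]\ar[d]^{\iota}&C^*_L(X)/RC^*_L(X)\ar[r]\ar[d]&0\\
0\ar[r]&RL^*(H_X)\ar[r]&L^*(H_X^+;\infty)\ar[r]&L^*_{\infty}(H_X)\ar[r]&0,
}$$
so the induced morphism of six-term exact sequences in $K$-theory, together with the Five Lemma and the fact that the first two vertical maps are isomorphisms, forces $K_*(C^*_L(X)/RC^*_L(X))\cong K_*(L^*_{\infty}(H_X))$. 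Combined with the preceding Proposition, which identifies $K_*(L^*_{\infty}(H_X))$ with $K^{\infty}_*(X)$, this gives the claim.

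I expect the main obstacle to be the bookkeeping in the first two steps rather than any conceptual difficulty. One must select the bounded metric on $X^+$ so that vanishing of the propagation measured in $d$ forces the operators $g(t)$ eventually inside every prescribed open neighbourhood of the diagonal of $X^+\times X^+$ --- only then does $\iota$ actually land in $L^*(H_X^+;\infty)$ --- and one must go carefully through the constructions of the two local index maps in \cite{Yu1997} and \cite{HIT2020} to confirm that passing from the matricial picture $\IC_f[X]$ to operators on the enlarged module $H_X^+$ does not change the index class. The mismatch between the $X$-module $H_X$ and the $X^+$-module $H_X^+$ is harmless and, if one prefers, can be absorbed by an Eilenberg swindle at the level of $K$-theory.
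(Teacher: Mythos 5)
Your proposal is correct and follows essentially the route the paper intends: the paper itself leaves this lemma to the cited comparisons of localization-algebra models (Yu's Theorem 3.2, Qiao--Roe, and Willett--Yu), and your explicit comparison homomorphism $\iota$ compatible with the local index maps, followed by the five-lemma argument on the quotient together with the preceding proposition identifying $K_*(L^*_\infty(H_X))$ with $K^\infty_*(X)$, is exactly the intended argument. The naturality point you isolate (that the two identifications with $K$-homology are intertwined by $\iota$, so the five lemma applies) is the only real content beyond the citations, and you handle it correctly.
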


Next, we will introduce the Roe algebra at infinity for a proper metric space $X$. Fix $x_0\in X$, let $K_n=B(x_0,n)$ and $U_n=X\backslash K_n$. Let $Z$ be a countable subset of $X$.

\begin{Def}
The algebraic Roe algebra at infinity, denoted by $\IC_{\infty}[X]$, is the set of all equivalence classes $[T]$, where $T\in \IC_f(X)$ and the equivalence relation $\sim$ on $[T]$ is defined by $T\sim S$ if and only if
$$\lim_{n\to\infty}\sup_{x,y\in U_n}\|T(x,y)-S(x,y)\|=0.$$

The algebraic operation of $\IC_{\infty}[X]$ is given by the usual matrix operations of $\IC_f[X]$. Define the Roe algebra at infinity $C^*_{\infty}(X)$ to be the completion of $\IC_{\infty}[X]$ with respect to the norm
$$\|[T]\|=\limsup_{n\to\infty}\|\chi_{U_n}T\|.$$
\end{Def}

One can see that the Roe algebra at infinity is also invariant up to coarse equivalence, so $C^*_{\infty}(X)$ is isomorphic to $C^*_{\infty}(P_d(X))$ for each $d\geq 0$. If $X$ is bounded, then $C^*_{\infty}(X)=0$.

\begin{Def}
The localization algebra at infinity of $X$, denoted by $\IC_{L,\infty}[X]$, is defined to be the $*$-algebra of all bounded and unformly norm-continuous functions
$$g:[0,\infty)\to\IC_{\infty}[X]$$
such that $g$ is of the form $g(t)=[T_t]$ with $T_t\in \IC[X]$ where the family $(T_t)_{t\in\IR_+}$ satisfies the conditions in Definition \ref{matrix Roe algebra} with uniform constants and there exists a bounded function $R(t):[0,\infty)\to[0,\infty)$ with $\lim_{t\to\infty}R(t)=0$ such that
$$(T_t)(x,y)=0\mbox{ whenever }d(x.y)>R(t)$$
for all $x,y\in Z$ and $t\in[0,\infty)$.

Define $C^*_{L,\infty}(X)$ to be the completion of $\IC_{L,\infty}[X]$ with respect to the norm
$$\|g\|=\sup_{t\in\IR_+}\|g(t)\|.$$
\end{Def}

There exists a canonical quoitent map $C^*_L(X)\to C^*_{L,\infty}(X)$ defined by $g\mapsto g'$ where $g'(t)=[g(t)]$ for each $t\in\IR_+$. One can also see that $g'=0$ for any $g\in RC^*_L(X)\cap \IC_L[X]$ as $g(t)$ is uniformly supported in some compact subset of $X$. Then we have a homomorphism
$$q:\frac{C^*_L(X)}{RC^*_L(X)}\to C^*_{L,\infty}(X)$$
which induces a homomorphism on $K$-theory
$$q_*:K_*^{\infty}(X)\to K_*(C^*_{L,\infty}(X)).$$

\begin{Thm}
Suppose $X$ is a discrete metric space with bounded geometry. Then
$$q_*:K_*^{\infty}(P_d(X))\to K_*(C^*_{L,\infty}(P_d(X)))$$
is an isomorphism for each $d\geq 0$.
\end{Thm}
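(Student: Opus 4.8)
The plan is to recognise $q_*$ as a comparison map in a six-term exact sequence and to reduce the statement to the vanishing of the $K$-theory of a single flabby ideal. By the preceding lemma, $K^{\infty}_*(P_d(X))\cong K_*\big(C^*_L(P_d(X))/RC^*_L(P_d(X))\big)$, and under this identification $q_*$ is exactly the homomorphism induced by the surjection $q:C^*_L(P_d(X))/RC^*_L(P_d(X))\to C^*_{L,\infty}(P_d(X))$ introduced above; surjectivity holds because the image of $\IC_L[P_d(X)]$ under $g\mapsto g'$ is a dense $*$-subalgebra of $C^*_{L,\infty}(P_d(X))$. Writing $J$ for the kernel of the canonical quotient $C^*_L(P_d(X))\to C^*_{L,\infty}(P_d(X))$, we have $RC^*_L(P_d(X))\subseteq J$ and $\ker q=J/RC^*_L(P_d(X))$, so the six-term exact sequence of the extension $0\to J/RC^*_L(P_d(X))\to C^*_L(P_d(X))/RC^*_L(P_d(X))\xrightarrow{q}C^*_{L,\infty}(P_d(X))\to 0$ reduces the theorem to proving
$$K_*\big(J/RC^*_L(P_d(X))\big)=0.$$

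To establish this vanishing I would first unwind the ideal. On the level of the dense subalgebras, $J$ consists of those $g\in\IC_L[P_d(X)]$ whose matrix coefficients at each fixed time $t$ decay in norm away from every bounded set (equivalently $\lim_{n\to\infty}\|\chi_{U_n}g(t)\|=0$ for each $t$), whereas $RC^*_L(P_d(X))$ consists of the families all of whose time slices are supported in one common compact subset of $P_d(X)$. Thus, modulo $RC^*_L(P_d(X))$, an element of $J$ is a propagation-controlled, time-continuous family that is spatially concentrated for each fixed $t$ but whose region of concentration is permitted to escape to spatial infinity as $t\to\infty$. Such an ideal should admit an Eilenberg swindle: given $g$, one forms the family obtained by stacking the time-translates $t\mapsto g(t+1),g(t+2),\dots$ on mutually orthogonal summands of the ample $P_d(X)$-module; since $\lim_{t\to\infty}R(t)=0$ and $P_d(X)$ has bounded geometry, the stacked family again lies in $\IC_L[P_d(X)]$ with uniform structure constants and in $J$, while being Murray--von Neumann equivalent to its direct sum with $g$. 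Running the swindle over matrices then gives $K_*\big(J/RC^*_L(P_d(X))\big)=0$.

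The step I expect to be delicate is the bookkeeping inside this swindle: one must choose the splitting of the ample module and the time-shifted copies so that the local finiteness bound, the uniform $\IC_f$-constants, and the propagation functions are all preserved with uniform control, and so that everything descends correctly to the quotient by $RC^*_L(P_d(X))$, where the small-$t$ behaviour is irrelevant. If this direct construction turns out to be unwieldy, a safe fallback is to carry out the argument in lockstep with the proofs of \cite[Propositions 9.4.2 and 6.6.2]{HIT2020} and \cite[Theorem 3.2]{Yu1997}, which establish the analogous facts for the pair $RL^*(H_X)\subseteq L^*(H^+_X;\infty)$, and then to pass to the corresponding $R$-quotients exactly as in the preceding lemma; the quotient versions inherit the required isomorphisms because all the maps involved are compatible with the respective canonical surjections.
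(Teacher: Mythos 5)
Your route is genuinely different from the paper's, and it has a real gap at its crucial step. The reduction itself is sound in outline: granted that $q$ is surjective (which you assert via density but which does need an argument, since elements of $\IC_{L,\infty}[P_d(X)]$ are only uniformly continuous in the at-infinity norm and need not admit norm-continuous lifts; one can fix this by piecewise-linear interpolation in $t$ of a lift at a discrete set of times, which preserves the uniform constants and the propagation decay), the six-term sequence does reduce the theorem to $K_*\bigl(J/RC^*_L(P_d(X))\bigr)=0$ — indeed, given surjectivity this vanishing is \emph{equivalent} to the theorem. The problem is the Eilenberg swindle you propose for that vanishing. Stacking the time-translates $g(\cdot+1),g(\cdot+2),\dots$ on mutually orthogonal summands (whether inside the internal Hilbert space $H$ of the ample module, or as a diagonal in $J\ox\K$) produces matrix entries $\bigoplus_k g(t+k)(x,y)$, and such an orthogonal sum of compact operators is compact only if $\|g(t+k)(x,y)\|\to 0$ as $k\to\infty$. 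This fails for genuine elements of $J$: take $X=\IN$, $d=0$, and $g(t)(x,x)=\min\{1,t/(x+1)\}\,q$ with $q$ a rank-one projection. Each time slice vanishes at spatial infinity (so $g\in J$), $g$ is uniformly continuous with propagation zero, and $g\notin RC^*_L$, yet for every fixed $x$ one has $g(t+k)(x,x)\to q$ as $k\to\infty$, so the stacked family is not locally compact and lies in no version of the algebra. Moreover this cannot be repaired by changing the representative: any $r\in RC^*_L$ satisfies $\sup_t\|\chi_{U_n}r(t)\|\to 0$, so one cannot arrange $\|(g-r)(t)(x,x)\|\to0$ as $t\to\infty$ for every $x$; and displacing the copies spatially is forbidden because the conjugated families would violate the condition that propagation tends to zero. (Bounded geometry and $R(t)\to0$, which you invoke, control positions of nonzero entries but not the compactness of the entries themselves.) Your fallback reference to the comparisons $K_*(L^*)\cong K_*(C^*_L)$ and $K_*(RL^*)\cong K_*(RC^*_L)$ does not help either: those results compare two models of the \emph{same} localization algebra, whereas the whole content here is comparing the quotient $C^*_L/RC^*_L$ with the structurally different algebra $C^*_{L,\infty}$.

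For comparison, the paper avoids the ideal $J$ altogether: it shows that both functors $d\mapsto K^{\infty}_*(P_d(X))$ (realized as $K_*(C^*_L/RC^*_L)$) and $d\mapsto K_*(C^*_{L,\infty}(P_d(X)))$ are invariant under strongly Lipschitz homotopies (this is where the Eilenberg swindle, in the form of Yu's argument for localization algebras, legitimately enters) and admit Mayer--Vietoris sequences, and that $q_*$ is compatible with both; an induction on the skeleta then reduces the statement to the $0$-dimensional case, where both sides can be computed directly (each becomes a quotient of a product by a direct sum of the $K$-theories over the vertices) and $q_*$ is the evident identification. If you want to salvage your approach, you would need a substantially more careful construction than a plain time-shift stacking — for instance one that first proves a homotopy-invariance statement for $J/RC^*_L$ and then contracts, which in effect re-derives the paper's argument — so as written the vanishing $K_*\bigl(J/RC^*_L\bigr)=0$ is not established.
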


\begin{proof}
By using an Eilenberg swindle argument as \cite[Theorem 3.4]{Yu1997}, one can show that both groups above are invariant under strongly Lipschitz homotopies and both groups admits a Mayer-Vietories sequence. Then it is suffices to prove the theorem for the case when $P_d(X)$ is $0$-dimensional, which obviously holds as both groups are trivial.
\end{proof}

\section{Reduction to the coarse Novikov conjecture at infinity}

In this section, we discuss the strategy to prove the main theorem. As general metric spaces (such as warped cones) are always not of the form of coarse disjoint unions of a sequence of bounded subsets, we can not use the techniques in \cite{CWY2013} directly. We will show that there exists an assmbly map at infinity from the $K$-homology group at infinity to the $K$-theory group of the Roe algebra at infinity, and the coarse Novikov conjecture can always be reduced to verifying the assembly map at infinity to be injective whether the space is of the form of coarse disjoint union or not.

\subsection{The case of coarse disjoint unions}

In this section, we assume that $X=\bigsqcup_{n\in\IN}X_n$ is a coarse disjoint union of a sequence of uniformly bounded geometry, finite metric spaces such that $\# X_n\to\infty$ as $n\to\infty$. Actually, this situation has been discussed in Section 4 of \cite{CWY2013}.

\begin{Lem}
Let $X=\bigsqcup_{n\in\IN}X_n$ be a coarse disjoint union of a sequence of uniformly bounded geometry, finite metric spaces such that $\# X_n\to\infty$ as $n\to\infty$. Then for sufficiently large $d$, we have that
$$K_*^{\infty}(P_d(X))=\frac{\prod_{n=0}^{\infty}K_*(P_d(X_n))}{\bigoplus_{n=0}^{\infty}K_*(P_d(X_n))}.$$
\end{Lem}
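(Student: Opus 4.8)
The plan is to compute $K_*^{\infty}(P_d(X))=\lim_{R\to\infty}K_*(U_R)$ directly, replacing the family of closed balls $K_R=\{z\in P_d(X):d(z,x_0)\leq R\}$ around a basepoint $x_0$ by a combinatorially transparent cofinal family adapted to the coarse disjoint union. Fix $d$ (in fact any $d\geq 0$ works, so ``sufficiently large'' is harmless). Since $X=\bigsqcup_{n\in\IN}X_n$ is a coarse disjoint union, $d(X_n,X_m)\to\infty$ as $\max(n,m)\to\infty$, so we may pick $N\in\IN$ with $d(X_n,X_m)>d$ whenever $n\neq m$ and $\max(n,m)\geq N$. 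A point of $P_d(X)$ has support of diameter at most $d$, so its support cannot meet two distinct $X_n,X_m$ with $\max(n,m)\geq N$; hence each $P_d(X_n)$ with $n\geq N$ is clopen in $P_d(X)$ and $P_d(X)=P_d(X_0\cup\cdots\cup X_{N-1})\sqcup\bigsqcup_{n\geq N}P_d(X_n)$, the first summand being compact. Set $V_m:=\bigsqcup_{n\geq m}P_d(X_n)=P_d(X)\setminus P_d(X_0\cup\cdots\cup X_{m-1})$ for $m\geq N$, an open set with compact complement.

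Next I would check that both $\{U_R\}_R$ and $\{V_m\}_{m\geq N}$ are cofinal in the directed poset of complements of compact subsets of $P_d(X)$: every compact subset lies in some ball $K_R$, and (since $d(x_0,P_d(X_n))\to\infty$) it also misses all but finitely many $P_d(X_n)$, hence lies in some $P_d(X_0\cup\cdots\cup X_{m-1})$. As $K$-homology is functorial for restriction along open inclusions, these complements of compact sets form a direct system and $K_*^{\infty}(P_d(X))=\lim_{R\to\infty}K_*(U_R)\cong\lim_{m\to\infty}K_*(V_m)$, with the structure map $K_*(V_m)\to K_*(V_{m+1})$ being restriction to the open (indeed clopen) subset $V_{m+1}\subseteq V_m$.

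The main computational step is to identify $K_*(V_m)$. Since $P_d(X_m)$ is clopen in $V_m$ one has $C_0(V_m)=C(P_d(X_m))\oplus C_0(V_{m+1})$, and iterating exhibits $C_0(V_m)$ as the inductive limit of its finite truncations $\bigoplus_{m\leq n\leq M}C(P_d(X_n))$. Applying the Milnor $\lim^{1}$ exact sequence for the functor $KK(\,\cdot\,,\IC)$ to this inductive system, the $\lim^{1}$ term vanishes by Mittag--Leffler (the truncation system has surjective connecting maps, the coordinate projections) and the $\lim$ term is $\prod_{n\geq m}K_*(P_d(X_n))$; thus $K_*(V_m)\cong\prod_{n\geq m}K_*(P_d(X_n))$, and under this identification the structure maps $K_*(V_m)\to K_*(V_{m+1})$ become the projections forgetting the $m$-th coordinate. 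A routine diagram chase then identifies $\lim_{m}\prod_{n\geq m}K_*(P_d(X_n))$ along these projections with $\prod_{n\in\IN}K_*(P_d(X_n))\big/\bigoplus_{n\in\IN}K_*(P_d(X_n))$ --- the natural surjection of $\prod_nK_*(P_d(X_n))$ onto this colimit has kernel exactly the eventually-zero sequences --- which yields the claimed formula.

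I expect the genuine obstacle to be the identification $K_*\!\big(\bigsqcup_nY_n\big)\cong\prod_nK_*(Y_n)$ for a countable disjoint union of compact spaces: here one must handle the non-compact space $V_m$ carefully and invoke the Milnor/$\lim^{1}$ machinery. Equivalently one can argue through the localization-algebra model $K_*^{\infty}(P_d(X))\cong K_*\!\big(C^*_L(P_d(X))/RC^*_L(P_d(X))\big)$, using that finite-propagation operators over a coarse disjoint union are block diagonal outside a bounded set, as in Section~4 of \cite{CWY2013}. The cofinality reduction and the final algebraic direct-limit computation are routine.
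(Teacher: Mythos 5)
Your argument is correct, but it reaches the formula by a different mechanism than the paper. The paper's own proof invokes the six-term sequence relating $RK_*(X)$, $K_*(X)$ and $K_*^{\infty}(X)$ established earlier in Section 4: it computes $RK_*(P_d(X))\cong K_*(P_d(X_{N_d}))\oplus\bigoplus_{n>N_d}K_*(P_d(X_n))$ and $K_*(P_d(X))\cong K_*(P_d(X_{N_d}))\oplus\prod_{n>N_d}K_*(P_d(X_n))$, notes that the map $RK_*\to K_*$ is the inclusion of the sum into the product and hence injective, so the six-term sequence degenerates to $0\to RK_*\to K_*\to K_*^{\infty}\to 0$, giving $K_*^{\infty}=K_*/RK_*=\prod/\bigoplus$. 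You instead compute the defining direct limit $\lim_{R\to\infty}K_*(U_R)$ directly over the cofinal family of clopen tails $V_m=\bigsqcup_{n\geq m}P_d(X_n)$, identify $K_*(V_m)\cong\prod_{n\geq m}K_*(P_d(X_n))$ via the Milnor $\lim^1$ sequence, and pass to the limit along the coordinate-dropping projections. Both proofs rest on the same two inputs: the clopen decomposition of $P_d(X)$ past a finite head, and the product formula for the K-homology of a countable disjoint union of compact complexes (your Milnor step is precisely what the paper uses implicitly when it writes $K_*(P_d(X))$ as a product); your route avoids the $RK_*\to K_*\to K_*^{\infty}$ exact sequence altogether, at the cost of the cofinality bookkeeping, while the paper's is shorter given that sequence is already on hand. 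The one point you should make explicit is why the Milnor sequence is available here --- the truncation algebras are commutative, hence nuclear, and the inclusions of finite truncations are split (they are inclusions of direct summands), or alternatively one can simply cite countable additivity of Kasparov K-homology in the first variable; with that said, your proof is complete, and your remark that the statement in fact holds for every $d\geq 0$ (with $N$ depending on $d$) agrees with what the paper's argument yields.
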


\begin{proof}
For any $d\geq 0$, there exists $N_d\in N$ large enough such that $d(X_n,X_m)> d$ for $n,m\in N_d$. Let $X_{N_d}=\bigcup^{N_d-1}_{n=0} X_n$, then we have that $P_d(X)=P_d(X_{N_d})\cup\mathop{\sqcup}\limits_{n>N_d}P_d(X_n)$. Then the maps $RK_*(P_d(X_n))\to RK_*(P_d(X))$ and $K_*(P_d(X_n))\to K_*(P_d(X))$ induced by the inclusions $X_n\to X$ induce isomorphisms
$$RK_*(P_d(X))\cong K_*(P_d(X_{N_d}))\oplus\bigoplus_{n>N_d}K_*(P_d(X_n))$$
and
$$K_*(P_d(X))\cong K_*(P_d(X_{N_d}))\oplus\prod_{n>N_d}K_*(P_d(X_n)).$$
The inclusion $\bigoplus_{n>N_d}K_*(P_d(X_n))\to\prod_{n>N_d}K_*(P_d(X_n))$ is an injection, i.e., $RK_*(P_d(X))\to K_*(P_d(X))$ is an injection. Thus, we have the short exact sequence
$$0\to RK_*(P_d(X))\to K_*(P_d(X))\to K^{\infty}_*(P_d(X))\to0$$
This means that
$$K_*^{\infty}(P_d(X))=\frac{K_*(P_d(X))}{RK_*(P_d(X))}=\frac{\prod_{n=0}^{\infty}K_*(P_d(X_n))}{\bigoplus_{n=0}^{\infty}K_*(P_d(X_n))},$$
which completes the proof.
\end{proof}

For each $d>0$. let $Z_d\subset P_d(X)$ be a countable dense subset, and let $Z_{d,n}=Z_d\cap P_d(X_n)$ for any $d\geq0$, $n\in\IN$. We assume that $Z_{d,n}\subset Z_{d,n'}$, if $d'<d$.
In this case, the Roe algebra at infinity can be also described as follow:

\begin{Def}[\cite{CWY2013}]\label{Roe algebra}
For each $d\geq0$, define $\ICu[\PdX]$ to be the set of all equivalence classes $T=[(T^{(0)},\cdots,T^{(n)},\cdots)]$ of sequences $(T^{(0)},\cdots,T^{(n)},\cdots)$ described as follows:\begin{itemize}
\item[(1)]$\{T^{(n)}\}_{n\in\IN}$ is a family of uniformly bounded functions, where $T^{(n)}$ is a function from $Z_{d,n}\times Z_{d,n}$ to $\mathcal K$ for all $n\in\IN$;
\item[(2)]for any bounded subset $B\subset P_d(X_n)$, we have that
$$\#\{(x,y)\in B\times B\cap Z_{d,n}\times Z_{d,n}\mid T^{(n)}(x, y)\ne 0\}<\infty;$$
\item[(3)]there exists $L>0$ such that
$$\#\{y\in Z_{d,n}|T^{(n)}(x,y)\ne0\}<L,\qquad\#\{y\in Z_{d,n}\mid T^{(n)}(y,x)\ne 0\}<L$$
for all $x\in Z_{d,n}$, $n\in\IN$;
\item[(4)]there exists $R>0$ such that $T^{(n)}(x,y)=0$ whenever $d(x,y)>R$ for $x,y\in Z_{d,n}$, $n\in\IN$. The least such $R$ is called the propagation of the sequence $(T^{(0)},\cdots,T^{(n)},\cdots)$.
\end{itemize}
The equivalence relation $\sim$ on these sequences is defined by
$$(T^{(0)},\cdots,T^{(n)},\cdots)\sim(S^{(0)},\cdots,S^{(n)},\cdots)$$
if and only if
$$\lim_{n\to\infty}\sup_{x,y\in Z_{d,n}}\|T^{(n)}(x,y)-S^{(n)}(x,y)\|_{\K}=0.$$
\end{Def}

By viewing $T^{(n)}$ as $Z_{d,n}\times Z_{d,n}$ matrices, the product structure for $\ICu[\PdX]$ is
defined as the usual matrix operations. Define $C^*_{u,\infty}(\PdX)$ to be the completion
of $\ICu[\PdX]$ with respect to the norm
$$\|T\|=\limsup_{n\to\infty}\|T^{(n)}\|,$$
where each operator $T^{(n)}$ is viewed as an element of the Roe algebra $C^*(P_d(X_n))$.

The following notion of localization algebra has its origin in \cite{Yu1997}, we shall recall its relation with $K$-homology at infinity.

\begin{Def}\label{localization algebra}
Let $\ICuL[\PdX]$ be the set of all bounded, uniformly norm-continuous functions
$$g:\IR_+\to \ICu[\PdX]$$
such that $g(t)$ is of the form $g(t)=[g^{(0)}(t),\cdots,g^{(n)}(t),\cdots]$ and satisfies that there exists a bounded function $R(t):\IR_+\to \IR_+$ with $\lim_{t\to\infty}R(t)=0$ such that
$(g^{(n)}(t))(x,y)=0$ whenever $d(x,y)>R(t)$ and $n\in\IN$;

The localization algebra at infinity $\CauL(\PdX)$ is defined to be the norm completion of $\ICuL[\PdX]$, where $\CauL(\PdX)$ is endowed with the norm
$$\|g\|_{\infty}=\sup_{t\in\IR_+}\|g(t)\|.$$
\end{Def}

One can see that the $\Cau(\PdX)$ coincides with $C^*_{\infty}(X)$ and $\CauL(\PdX)$ coincides with $C^*_{L,\infty}(P_d(X))$. 

For each $d>0$, there is a $*$-homomorphism
$$\Phi:\IC_f[P_d(X)]\to\ICu[\PdX]$$
defined by $\Phi(T)=[(\Phi^{(0)}(T),\cdots,\Phi^{(n)}(T),\cdots)]$ for $T\in\IC_f[P_d(X)]$, with
$$\Phi^{(n)}(T)(x,y)=\left\{\begin{aligned}&0,&&\mbox{if }n<N_R\\&T|_{Z_{d,n}\times Z_{d,n}},&&\mbox{if }n\geq N_R,\end{aligned}\right.$$
where $R = \prop(T)$ and $N_R\in\IN$ is large enough such that
$$d_{P_d(X)}\left(P_d(X_n),P_d\left(\bigsqcup_{i=0}^{n-1}X_i\right)\right)>R,$$
for all $n\geq N_R$. Then $\Phi$ extends to a $C^*$-homomorphism $\Phi:C^*(P_d(X))\to\Cau(\PdX)$, see \cite[Theorem 4.5]{CWY2013}.

For any $d\geq 0$, there exists $N_d\in N$ large enough such that $d(X_n,X_m)> d$ for $n,m\in N_d$. Let $X_{N_d}=\bigcup^{N_d-1}_{n=0} X_n$. As $P_d(X)=P_d(X_{N_d})\bigsqcup\left(\sqcup_{n\geq N_d}P_d(X_n)\right)$, we have that
$$K_*(P_d(X))=K_*(P_d(X_{N_d}))\bigoplus\prod_{n=N_d}^{\infty}K_*(P_d(X_n)).$$
By the definition of the assembly maps, we have the following commutative diagram
$$\xymatrix{0\ar[d]&&0\ar[d]\\K_*(P_d(X_{N_d}))\oplus\bigoplus_{n=N_d}^{\infty}K_*(P_d(X_n))\ar[rr]\ar[d]&&K_*(\mathcal{K})\ar[d]\\
K_*(P_d(X))\ar[rr]^{ev_*}\ar[d]&&K_*(C^*(P_d(X)))\ar[d]^{\Phi_*}\\
K^{\infty}_*(P_d(X))\ar[rr]^{ev_{\infty,*}}\ar[d]&&K_*(C^*_{u,\infty}(P_d(X_n)))\\0&&}$$
where $ev_{\infty,*}$ is induced by the evaluation map 
$$ev_{\infty}:C^*_{L,\infty}(P_d(X))\to C^*_{\infty}(X),\qquad g\mapsto g(0).$$
By \cite[Remark 2.12]{OY2009}, one has that $K_*(\K)\to K_*(C^*(P_d(X)))$ is an injection.

Passing to inductive limit as $d\to\infty$, the top horizontal arrow is an isomorphism for the following reason. An element in the sum, as a finite sequence, is supported on summands below some fixed $m$ and, as $d\to\infty$, will eventually be absorbed into the first term on a single simplex. Thus, to prove $\mu$ is injective,  it suffices to prove $\mu_{\infty}$ is injective by diagram chasing.

\subsection{The case of spaces which admit an infinite coarse component}

The next notion is introduced by M.~Finn-Sell in \cite{Martin2014}.

\begin{Def}[\cite{Martin2014}]
A bounded geometry proper metric space $X$ is said to have an infinite coarse component if there exists $R>0$ such that $P_R(X)$ has an unbounded connected component. Otherwise, we say $X$ only has finite coarse components.
\end{Def}

\begin{Lem}\label{fcc and cdu}
A bounded geometry proper metric space $X$ has only finite coarse components if and only if it is a coarse disjoint union of a sequence of finite metric spaces.
\end{Lem}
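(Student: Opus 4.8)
The plan is to prove the two implications directly from the definitions, the only delicate point being to use the connected components of the Rips complexes $P_R(X)$ at \emph{all} scales $R$ simultaneously. I will use the standard reformulation that $X=\bigsqcup_{n\in\IN}X_n$ is a coarse disjoint union precisely when, for every $R>0$, only finitely many pairs $(n,m)$ with $n\ne m$ satisfy $d(X_n,X_m)\le R$. Since $X$ has bounded geometry I may first replace it by a $\delta$-net — a coarse equivalence, under which both properties in the statement are invariant — and so assume $X$ is uniformly discrete, so that bounded subsets of $X$ are finite; this lets me treat ``bounded'' and ``finite'' as synonyms for components of $P_R(X)$. The bounded case ($X$ itself a single finite metric space) is trivial, so I assume $X$ is unbounded.

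For ``coarse disjoint union $\Rightarrow$ only finite coarse components'': fix $R>0$ and pick $N$ larger than every index occurring in a pair $(n,m)$, $n\ne m$, with $d(X_n,X_m)\le R$. Then $P_R(X)$ has no edge joining $X_n$ to $X_m$ whenever $n\ne m$ and $\max\{n,m\}\ge N$; tracing an edge-path one sees that each connected component of $P_R(X)$ is contained either in the finite set $X_0\cup\cdots\cup X_{N-1}$ or in a single $X_n$ with $n\ge N$. Either way its vertex set is finite, hence bounded, so no $P_R(X)$ has an unbounded component.

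For the converse, assume every component of every $P_R(X)$ is bounded, hence finite. Fix $x_0\in X$ and let $C_n$ be the vertex set of the component of $x_0$ in $P_n(X)$. By hypothesis each $C_n$ is finite; moreover $C_n\subseteq C_{n+1}$ and $B(x_0,n)\subseteq C_n$ (since $[x_0,y]\in P_n(X)$ as soon as $d(x_0,y)\le n$), so $\bigcup_nC_n=X$ because every point lies at finite distance from $x_0$. Put $X_0=C_0$ and $X_n=C_n\setminus C_{n-1}$ for $n\ge1$; relabelling so that every $X_n$ is nonempty, $\{X_n\}$ partitions $X$ into finite metric spaces. The key estimate is
$$d(X_n,X_m)>\min\{n,m\}\qquad(n\ne m),$$
for if $n<m$, $x\in X_n\subseteq C_n$ and $y\in X_m\subseteq C_m\setminus C_{m-1}$, then $y\notin C_{m-1}\supseteq C_n$, so $x$ and $y$ lie in distinct components of $P_n(X)$ and hence $d(x,y)>n$; relabelling only raises indices, so the estimate survives it.

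It remains to verify that $\{X_n\}$ realises $X$ as a coarse disjoint union. Given $R>0$, the estimate forces any pair $(n,m)$ with $n\ne m$ and $d(X_n,X_m)\le R$ to have $\min\{n,m\}\le R$, so the smaller index lies in $\{0,1,\dots,\lfloor R\rfloor\}$; and for each such index $n$ the $R$-neighbourhood of the finite set $X_n$ is finite by bounded geometry, hence meets only finitely many of the disjoint sets $X_m$. Thus only finitely many pairs are within distance $R$, as needed. The only real obstacle is this converse, and the point one can easily miss is that one must \emph{not} try to split $X$ into several ``coarse components'': in a genuine metric space any two points are at finite distance, so $\bigcup_nC_n=X$ and there is a single coarse component — the whole content is that filtering it by the finite sets $C_n$ and taking successive differences produces pieces whose mutual distances tend to infinity.
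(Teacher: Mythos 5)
Your proof is correct and follows essentially the same route as the paper: the forward direction splits $P_R(X)$ along the pieces exactly as in the paper, and the converse uses the same construction of taking successive differences of the (finite) component of a basepoint in $P_R(X)$ at increasing scales, with the same separation estimate $d(X_n,X_m)>\min\{n,m\}$. The only differences are cosmetic — linear scales $n$ instead of the paper's $2^n$, and verifying the coarse-disjoint-union condition via the ``finitely many pairs within distance $R$'' formulation (using bounded geometry) rather than the paper's direct distance estimate.
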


\begin{proof}
Let $X=\bigsqcup_{n\in\IN}X_n$ be a coarse disjoint union of the sequence of finite metric spaces $\{X_n\}$. For each $R>0$, there exists $N>0$ such that $d(X_i,X_j)>R$ for all $i,j>N$. Let $X_{N_R}=\bigsqcup_{n=1}^NX_n$ and we have that $P_R(X)=P_R(X_{N_R})\sqcup \bigsqcup_{n>N}P_R(X_n)$. Thus $X$ has only finite coarse components.

On the other hand, let $X$ be a metric space which has only finite coarse components. Fix $x_0\in X$, denote $P_d(X)_{x_0}$ the connected component of $P_d(X)$ which contains $x_0$. Then let
\begin{equation}\begin{split}\nonumber
X_1&=P_{2^1}(X)_{x_0}\cap X;\\
X_2&=(P_{2^2}(X)_{x_0}\cap X)\backslash X_1;\\
\cdots&\\
X_n&=(P_{2^n}(X)_{x_0}\cap X)\backslash X_{n-1};\\
\cdots&.
\end{split}\end{equation}
Then we have that $X=\bigsqcup_{n\in\IN}X_n$. As $X$ has bounded geometry and finite coarse components, $X_n$ is a finite metric space for each $n\in\IN$. Assume that $i,j\in\IN$ ($i>j$) and any two points $x_i\in X_i$, $x_j\in X_j$. As $x_i\notin P_{2^j}(X)_{x_0}$, we have that
$$d(x_i,x_j)>2^j,$$
which means that $d(X_i,X_j)\to\infty$ as $i,j\to\infty$. Thus $X$ is the coarse disjoint union of the sequence $\{X_n\}_{n\in\IN}$.
\end{proof}

\begin{Lem}\label{zero map}
Let $X$ be a bounded geometry metirc space which admits an infinite coarse component. Then
\begin{itemize}
\item[(1)]the $K$-theory map $i_*:K_*(\K)\to K_*(C^*(X))$ induced by the canonical inclusion $i:\K\to C^*(X)$ is the zero map, where $\K=\K(H_X)$ is the algebra of all compact operators on the geometric module $H_X$;
\item[(2)]the map $\lim_{d\to\infty}RK_*(P_d(X))\to\lim_{d\to\infty}K_*(P_d(X))$ induced by the canonical inclusion $RC_L^*(P_d(X))\to C^*_L(P_d(X))$ is also the zero map.
\end{itemize}\end{Lem}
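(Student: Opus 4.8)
Both parts will be deduced from the presence of an \emph{infinite ray} inside the infinite coarse component. Fix $R_0>0$ such that $P_{R_0}(X)$ has an unbounded connected component $Y$. Then $Y\cap X$ is infinite (it is coarsely dense in $Y$, hence unbounded, hence infinite by bounded geometry) and is connected through edges joining $R_0$-close points; since $X$ has bounded geometry this graph is locally finite, so König's lemma produces an infinite \emph{simple} edge path $x_0,x_1,x_2,\dots$ in $Y\cap X$ with $d(x_j,x_{j+1})\le R_0$ and the $x_j$ pairwise distinct. For part (2) we may take the base point defining $K_n=B(x_0,n)$ to be this $x_0$, since $RC^*_L(X)$ is independent of that choice. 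Because the $x_j$ are distinct and balls in $X$ are finite, $d(x_0,x_j)\to\infty$ in $X$, and since each inclusion $X\hookrightarrow P_d(X)$ is a coarse equivalence, also $d_{P_d(X)}(x_0,x_j)\to\infty$ for every $d\ge 0$.

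\emph{Part (1) (Eilenberg swindle).} Fix a unit vector $h\in H$ and, for $j\ge 0$, let $p_j\in\B(H_X)$ be the rank-one projection onto $\IC(\delta_{x_j}\otimes h)$; then $p_0$ represents the generator of $K_0(\K)=\mathbb{Z}$ and $i_*[p_0]=[p_0]$ in $K_0(C^*(X))$. I would set $P=\sum_{j\ge 0}p_j$: this has propagation $0$ and is locally compact (each bounded set contains only finitely many $x_j$), so $P\in\IC[X]\subseteq C^*(X)$. For $j\ge 1$ let $v_j$ be the rank-one partial isometry sending $\delta_{x_{j-1}}\otimes h$ to $\delta_{x_j}\otimes h$ and vanishing on the orthogonal complement; since the $x_j$ are distinct the $v_j$ have pairwise orthogonal domains and ranges, so $V=\sum_{j\ge1}v_j$ is a partial isometry with $V^*V=P$, $VV^*=P-p_0$ and $\prop(V)\le R_0$, and $V=VP\in C^*(X)$. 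Hence $[P]=[P-p_0]$ in $K_0(C^*(X))$, while orthogonality of $p_0$ and $P-p_0$ gives $[P]=[p_0]+[P-p_0]$; subtracting yields $[p_0]=0$, so $i_*=0$ on $K_0(\K)$. As $K_1(\K)=0$, this proves (1).

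\emph{Part (2) (contract to a point and push off to infinity).} Via the localization-algebra picture the map in question is the natural map $\lim_{d\to\infty}RK_*(P_d(X))\to\lim_{d\to\infty}K_*(P_d(X))$, where $RK_*(P_d(X))=\lim_{R\to\infty}K_*(\mathcal B_{d,R})$ with $\mathcal B_{d,R}:=\overline{B_{P_d(X)}(x_0,R)}$ compact; so it suffices to show that for all $d,R$ the composite $K_*(\mathcal B_{d,R})\to K_*(P_d(X))\to\lim_{d'\to\infty}K_*(P_{d'}(X))$ vanishes. First, for $d'\ge R_0$ the linearly interpolated edge path defines a proper map $r\colon[0,\infty)\to P_{d'}(X)$ with $r(0)=x_0$ (properness from $d_{P_{d'}(X)}(x_0,x_j)\to\infty$), and since $C_0([0,\infty))$ is a cone, hence contractible, $K_0([0,\infty))=0$; therefore the point class $[x_0]\in K_0(P_{d'}(X))$, factoring through $K_0([0,\infty))$, is zero, and of course $K_1(\{x_0\})=0$. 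Second, $\mathcal B_{d,R}$ meets only finitely many simplices of $P_d(X)$, hence is contained in the full subcomplex spanned by a finite set $F$ of vertices of $X$, all within a bounded distance of $x_0$; so for $d'$ large enough $F$ spans a single simplex $\Sigma$ of $P_{d'}(X)$ with $\mathcal B_{d,R}\subseteq\Sigma$. The straight-line contraction of $\Sigma$ to its vertex $x_0$ gives a homotopy (proper, since $\mathcal B_{d,R}$ is compact) from the inclusion $\mathcal B_{d,R}\hookrightarrow P_{d'}(X)$ to the constant map at $x_0$; on $K$-homology the induced map therefore factors through $K_*(\{x_0\})\to K_*(P_{d'}(X))$, which is zero by the previous step. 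This gives (2).

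\emph{Main obstacle.} The delicate point is that (2) genuinely requires passing to the limit over $d$: at a fixed scale the map $RK_*(P_d(X))\to K_*(P_d(X))$ need not vanish, and what forces it to die is that enlarging $d$ eventually engulfs any fixed compact ball inside a single simplex. One must also take care that the localization-algebra inclusion $RC^*_L(P_d(X))\hookrightarrow C^*_L(P_d(X))$ implements the topological map $RK_*\to K_*$ under the identifications recalled in Section~4, and that all homotopies used are proper so that $K$-homology functoriality applies; in part (1) the only subtlety is ensuring $V$ lies in $C^*(X)$ itself, which follows from $V=VP$.
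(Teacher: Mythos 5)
Your proof is correct, and it rests on the same key geometric input as the paper's — an infinite ray $x_0,x_1,x_2,\dots$ inside the unbounded coarse component with $d(x_j,x_{j+1})\le R_0$ and $d(x_0,x_j)\to\infty$ — but it implements the vanishing differently. For (1), the paper identifies $\K$ with $\lim_n C^*(K_n)$ and factors each inclusion $C^*(K_n)\to C^*(X)$ through $C^*(K_n\cup\{x_i\})$, which is coarsely equivalent to $\IR_+$ and hence has trivial $K$-theory; your direct Eilenberg swindle, with the propagation-zero projection $P=\sum_j p_j$ and the shift partial isometry $V$ along the ray, kills the rank-one class $[p_0]$ inside $C^*(X)$ itself, so it is more self-contained (it reproves, rather than quotes, the flasqueness-type vanishing behind $K_*(C^*(\IR_+))=0$). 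For (2), the paper exchanges the limits over $n$ and $d$ and shows $K_*(P_d(K_n))\to K_*(P_d(X))$ vanishes for $d$ large by factoring through $P_d(K_n\cup\{x_i\})$, which is (properly) homotopy equivalent to $\IR_+$; your two-step variant — engulf the compact ball in a single simplex of $P_{d'}(X)$ for $d'$ large, contract it to $x_0$, then kill the point class along the proper ray $[0,\infty)\to P_{d'}(X)$ using $K_*([0,\infty))=0$ — is an explicit unpacking of the same mechanism, and it correctly isolates the point (which you flag) that the vanishing genuinely needs the passage to larger $d'$, exactly as in the paper. Two caveats, neither fatal: your use of K\"onig's lemma and of finiteness of balls presupposes that $X$ is a discrete bounded geometry space, which is the setting of the main theorem and is also implicitly assumed in the paper's argument; and both your argument and the paper's rely on the Section~4 identification of the map induced by $RC^*_L(P_d(X))\hookrightarrow C^*_L(P_d(X))$ with the topological map $RK_*\to K_*$, which you acknowledge rather than verify.
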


\begin{proof}
As $X$ has an infinite coarse component and bounded geometry property, then there exists $R>0$ and $x_0\in X$ for which there is a unbounded component contianing $x_0\in P_R(X)$. Thus there exists a sequence $\{x_i\}_{i=1}^{\infty}\subset X$ such that $\lim_{i\to\infty}d(x_i,x_0)=\infty$ and $d(x_i,x_{i+1})\leq R$ for all $i\in\IN$. Let $K_n=\{x\in X\mid d(x_0,x)\leq n\}$ for all $n\in\IN$. As $X$ is proper metric space, for any compact subset $K\subset X$, there exists $n\in\IN$ such that $K\subset K_n$. We can identify $\K(H_X)$ by $\lim_{n\to\infty}C^*(K_n)$, where $C^*(K_n)$ is the Roe algebra of $K_n$.

To prove Property (1), we will only need to show that $(i_n)_*:K_*(C^*(K_n))\to C*(X)$ is the zero map for all $n\in\IN$, where $(i_n)_*$ is induced by the canonical inclusion $i_n:K_n\to X$. By the definition of $\{x_i\}$, we have that $K_n\cup\{x_i\}$ is coarse equivalent to $\IR_+$. Thus, $i_n$ can be seen the composition $K_n\to K_n\cup\{x_n\}\to X$. Notice that $K_*(C^*(\IR_+))=0$, then the following diagram commutes
$$\xymatrix{K_*(C^*(K_n))\ar[r]^{(i_n)_*}\ar[dr]&K_*(X)\\&K_*(C^*(K_n\cup\{x_i\}))\cong K_*(C^*(\IR_+))=0\ar[u]}.$$
Then $(i_n)_*$ is the zero map. The canonical inclusion $K_n\to K_{n+1}$ induces an isomorphism $K_*(\K)\to K_*(\K)$. Thus we have that
$$\xymatrix{K_*(C^*(K_n))\ar[r]^{(i_n)_*}\ar[d]_{=}&K_*(C^*(X))\ar[d]^{=}\\
K_*(C^*(K_{n+1}))\ar[r]^{(i_{n+1})_*}&K_*(C^*(X))}.$$
Therefore, $i_*:K_*(\K)=\lim_{n\to\infty}K_*(K_n)\to K_*(C^*(X))$ is the zero map.

We will next prove Property (2). Notice that $RK_*(P_d(X))=\lim_{n\to\infty}K_*(P_d(K_n))$. Then it suffices to show
$$\lim_{d\to\infty}\lim_{n\to\infty}K_*(P_d(K_n))\to \lim_{d\to\infty}K_*(P_d(X))$$
is the zero map for all $n\in\IN$, where the map is induced by the canonical inclusion $P_d(K_n)\to P_d(X)$ for each $n\in\IN$ and $d>0$. Notice that we have the following commutative diagram
$$\xymatrix{K_*(P_d(K_n))\ar[r]\ar[d]&K_*(P_{d'}(K_n))\ar[d]\\
K_*(P_d(K_{n'}))\ar[r]&K_*(P_{d'}(K_{n'}))}.$$
for all $d'>d$, $n'>n$, where the arrows are all given by the inclusions. Thus we have that
$$\lim_{d\to\infty}RK_*(P_d(X))\cong\lim_{d\to\infty}\lim_{n\to\infty}K_*(P_d(K_n))\cong\lim_{n\to\infty}\lim_{d\to\infty}K_*(P_d(K_n)).$$
Now it suffices to show that, for each $n\in \IN$, $K_*(P_d(K_n))\to K_*(P_d(X))$ is the zero map for sufficiently large $d$.

For given $n\in\IN$, let $R_n$ be a real number such that $\diam(K_n)<R_n$ and $d(x_i,x_{i+1})<R_n$ for all $i\geq 0$. For each $d>R_n$, the inclusion $P_d(K_n)\to P_d(X)$ can be seen as the composition
$$P_d(K_n)\to P_d(K_n\cup\{x_i\})\to P_d(X).$$
Notice that $P_d(K_n\cup\{x_i\})$ is homotopy equivalent to $\IR_+$ and $K_*(\IR_+)=0$. Thus we have the following commuting diagram
$$\xymatrix{K_*(P_d(K_n))\ar[r]\ar[dr]&K_*(P_d(X))\\&K_*(P_d(K_n\cup\{x_i\}))\cong K_*(\IR_+)=0\ar[u]}.$$
This shows that $\lim_{d\to\infty}K_*(P_d(K_n))\to\lim_{d\to\infty}K_*(X)$ is the zero map for each $n\in\IN$. Taking the direct limit as $n$ tends to infinity, this completes the proof.
\end{proof}

Let $g(t)\in RC^*_L(X)$ be a function such that there exists a compact subset $K\subset X$ such that $g(t)=\chi_Kg(t)\chi_K\in\K$ for all $t\in[0,\infty)$. Consider the evaluation map $ev:C^*_L(X)\to C^*(X)$ defined by $ev(g)=g(0)$. Notice that the restriction of $ev$ to $RC^*_L(X)$ is a $*$-homomorphism $RC^*_L(X)\to\K$.

Define $\Phi:C^*(X)\to C^*_{\infty}(X)$ by $T\mapsto [T]$ and the evaluation map at infinity
$$ev_{\infty}:C^*_{L,\infty}(P_d(X))\to C^*_{\infty}(P_d(X))\quad\text{ by }\quad[g]\mapsto [g(0)].$$
Then for each $d\geq0$, we have the following commuting diagram:
$$\xymatrix{0\ar[r]&RC^*_L(P_d(X))\ar[r]\ar[d]_{ev}&C^*_L(P_d(X))\ar[r]\ar[d]_{ev}&C^*_{L,\infty}(P_d(X))\ar[d]_{ev_{\infty}}\ar[r]&0\\
0\ar[r]&\K\ar[r]&C^*(P_d(X))\ar[r]^{\Phi}&C^*_{\infty}(P_d(X))}$$
and the commuting diagram of $K$-theory:
$$\xymatrix{\cdots\ar[r]&\lim\limits_{d\to\infty}RK_*(P_d(X))\ar[r]\ar[d]_{ev_*}&\lim\limits_{d\to\infty}K_*(P_d(X))\ar[r]\ar[d]_{ev_*}&\lim\limits_{d\to\infty}K_*^{\infty}(P_d(X))\ar[d]_{(ev_{\infty})_*}\ar[r]&\cdots\\
&K_*(\K)\ar[r]&K_*(C^*(P_d(X)))\ar[r]^{\Phi_*}&K_*(C^*_{\infty}(P_d(X)))&}.$$
Notice that the bottom sequences of both diagrams are not exact at $C^*(P_d(X))$ and $K_*(C^*(P_d(X)))$, respectively. To prove $ev_*:\lim\limits_{d\to\infty}K_*(P_d(X))\to K_*(C^*(X))$ is an injection, it suffices to prove $(ev_{\infty})_*$ is an injection by diagram chasing and Lemma \ref{zero map}.

We summarize this section by the following theorem:

\begin{Thm}\label{CNC at inf}
To prove the coarse Novikov conjecture for $X$, it suffices to prove the coarse Novikov conjecture at infinity for $X$, i.e., the $K$-theoretic homomorphism induced by the evaluation map at infinity
$$(ev_{\infty})_*:\lim_{d\to\infty}K_*(P_d(X))\to K_*(C^*_{\infty}(X))$$
is injective.
\end{Thm}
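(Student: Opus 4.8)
The plan is to split along the dichotomy of Lemma~\ref{fcc and cdu}: the bounded geometry proper metric space $X$ is either a coarse disjoint union $\bigsqcup_{n}X_n$ of finite metric spaces, or it admits an infinite coarse component. As a common starting point, recall that the local index map $\mu_L$ is an isomorphism and $\mu=ev_*\circ\mu_L$, together with the identifications $K_*(C^*_L(P_d(X)))\cong K_*(P_d(X))$, $\lim_d K_*(C^*(P_d(X)))\cong K_*(C^*(X))$, $K_*(C^*_{L,\infty}(P_d(X)))\cong K^{\infty}_*(P_d(X))$ and $K_*(C^*_{\infty}(P_d(X)))\cong K_*(C^*_{\infty}(X))$; under these, since $\mu_L$ is an isomorphism, the coarse Novikov conjecture for $X$ amounts to injectivity of $ev_*:\lim_{d\to\infty}K_*(P_d(X))\to K_*(C^*(X))$. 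I will also use repeatedly the commuting square $(ev_\infty)_*\circ\pi_*=\Phi_*\circ ev_*$, where $\pi_*$ is the map induced by the quotient $C^*_L\to C^*_{L,\infty}$.

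First I would treat the coarse disjoint union case by a diagram chase in the commuting diagram displayed above for that case. Its left-hand column is exact: the direct limit over $d$ of the short exact sequences $0\to RK_*(P_d(X))\to K_*(P_d(X))\xrightarrow{\pi_*}K^{\infty}_*(P_d(X))\to 0$ stays exact, the top horizontal arrow $\lim_d\big(K_*(P_d(X_{N_d}))\oplus\bigoplus_{n\ge N_d}K_*(P_d(X_n))\big)\to K_*(\mathcal{K})$ is an isomorphism, and $i_*:K_*(\mathcal{K})\to K_*(C^*(X))$ is injective by \cite[Remark 2.12]{OY2009}. So suppose $\xi\in\lim_d K_*(P_d(X))$ with $ev_*(\xi)=0$. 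Commutativity gives $(ev_\infty)_*(\pi_*(\xi))=\Phi_*(ev_*(\xi))=0$, so injectivity of $(ev_\infty)_*$ forces $\pi_*(\xi)=0$; by exactness of the column, $\xi$ is the image of some $\eta\in\lim_d RK_*(P_d(X))$. Chasing the diagram, $ev_*$ restricted to the representable part equals the composite of the isomorphism $\lim_d RK_*(P_d(X))\xrightarrow{\cong}K_*(\mathcal{K})$ with the injection $i_*$; hence $i_*$ applied to the image of $\eta$ vanishes, so $\eta=0$ and therefore $\xi=0$.

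Next I would treat the case that $X$ admits an infinite coarse component, where the essential input is Lemma~\ref{zero map}: part~(2) says the map $\lim_d RK_*(P_d(X))\to\lim_d K_*(P_d(X))$ is zero. Feeding this into the six-term exact sequence attached to $0\to RC^*_L(P_d(X))\to C^*_L(P_d(X))\to C^*_{L,\infty}(P_d(X))\to 0$ and passing to the limit over $d$, exactness forces $\pi_*:\lim_d K_*(P_d(X))\to\lim_d K^{\infty}_*(P_d(X))$ to be injective, since its kernel is the image of that zero map. Combining with the identity $\Phi_*\circ ev_*=(ev_\infty)_*\circ\pi_*$ and the hypothesis that $(ev_\infty)_*$ is injective, the composite $\Phi_*\circ ev_*$ is injective, whence $ev_*$ is injective, i.e. the coarse Novikov conjecture holds for $X$. (Part~(1) of Lemma~\ref{zero map}, which makes $i_*:K_*(\mathcal{K})\to K_*(C^*(X))$ the zero map, is what one would invoke if instead one chased the full square of that subsection directly, to absorb the failure of exactness at $K_*(C^*(X))$.)

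The main obstacle is precisely the content of Lemma~\ref{zero map}(2) — that the representable $K$-homology dies inside $\lim_d K_*(P_d(X))$ when there is an infinite coarse component — which is already established; granting it, the remainder is bookkeeping with exact sequences and naturality of the evaluation maps. I emphasize that the two cases cannot be unified: in the coarse disjoint union case $\pi_*$ is very far from injective, its kernel being the large direct sum $\bigoplus_n K_*(P_d(X_n))$, so there the injectivity of $i_*$ on $K_*(\mathcal{K})$ (the Ozawa--Yu fact, valid exactly for coarse disjoint unions of finite spaces) is indispensable, whereas in the other case $i_*$ is identically zero.
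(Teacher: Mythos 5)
Your proposal is correct and follows essentially the same route as the paper: the dichotomy of Lemma~\ref{fcc and cdu}, the diagram chase in the coarse disjoint union case using exactness of the left column, the limiting isomorphism onto $K_*(\mathcal{K})$ and the Ozawa--Yu injectivity of $i_*$, and in the infinite coarse component case the injectivity of $\pi_*$ extracted from Lemma~\ref{zero map}(2) via the six-term sequence, combined with naturality of the evaluation maps. No gaps; this is the paper's argument.
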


\section{Twisted algebras at infinity and their $K$-theories}

In the rest of this paper, we shall prove the evaluation map at infinity
$$(ev_{\infty})_*:\lim_{d\to\infty}K_*(C^*_{L,\infty}(P_d(X)))\to K_*(C^*_{\infty}(X))$$
is an injection, where $X$ is a bounded geometry metric space which admits a fibred coarse embedding into $\ell^p(\IN,\IR)$. The strategy of the proof is to build the following commuting diagram for each $d\geq 0$:
\begin{equation}\label{strategy}\xymatrix{K_*(C^*_{L,\infty}(P_d(X)))\ar[rr]^{(\beta_{L})_*\qquad}\ar[d]_{(ev_{\infty})_*}&&K_*(C^*_{L,\infty}(P_d(X),\A(B)))\ar[d]^{(ev^{\A}_{\infty})_*}\\K_*(C^*_{\infty}(P_d(X)))\ar[rr]^{\beta_*\qquad}&&K_*(C^*_{\infty}(P_d(X),\A(B)))}\end{equation}
where $C^*_{L,\infty}(P_d(X),\A(B))$ and $C^*_{\infty}(P_d(X),\A(B))$ are twisted localization algebra and twisted Roe algebra, respectively. We will show that the evaluation homomorphism $ev_{\infty}^{\A}$ between the twisted algebras induces an isomorphism on $K$-theory and $(\beta_L)_*$ is an injection. By a diagram chasing argument, we can see that $(ev_{\infty})_*$ is injective.

In this section, we will focus on proving $ev_{\infty}^{\A}$ in \eqref{strategy} induces an isomorphism on $K$-theory. We first introduce the twisted algebras at infinity, which are constructed by using the fact that $X$ admits a fibred coarse embedding into $\ell^p(\IN,\IR)$. The basic ideal of this section comes from \cite{Yu2000,CWY2013,CWY2015}.

\subsection{The twisted algebras at infinity}

Let $X$ be a discrete metric space with bounded geometry which admits a fibred coarse embedding into $B=\ell^p(\IN,\IR)$.

\begin{Def}\label{translation}
Let $t:B\to B$ be an affine isometry. We define a homomorphism
$$t_*:\A(B)\to\A(B)$$
by
$$t_*(\beta_x(g))=\beta_{t(x)}(g).$$
for any generator $\beta_x(g)\in\A(B)$ where $x\in B$ and $g\in\S$.
\end{Def}

For each $d\geq 0$, the Rips complex $P_d(X)$ endowed with the semi-spherical metric admits a fibred coarse embedding into $B$ as $X$ is coarse equivalent to $P_d(X)$. We give some explainations as follow. As $X$ is countable, we write $X=\{x_1,\cdots,x_n,\cdots\}$, For each $n\geq 1$, we define
$$B_{d,x_n}:=\left\{\sum_{i\in\IN}t_ix_i\in P_d(X)\,\Big|\,t_i=0\text{ for all $i<n$ and }t_n\ne0 \right\}.$$
Then it is obvious that $B_{r,x}$ is contained in the union of the simplices that contain $x$ as a vertex and $\{B_{d,x}\}_{x\in X}$ forms a disjoint Borel cover of $P_d(X)$ by definition.  For any $x\in P_d(X)$, there exists a unique $\bar x\in X$ such that $x\in B_{d,\bar x}$. Define the Borel map
$$j_d:P_d(X)\to X\quad\text{ by }\quad x\mapsto\bar x.$$
It is easy to check $j_d$ is a coarse equivalence for each $d\geq 0$. Thus let $B_x=B_{\bar x}$ and $s(x)=s(\bar x)$ for each $x\in P_d(X)$. We define $t_x(y)=t_{\bar x}(\bar y)$ for any $x,y\in P_d(X)$ with $\bar y\in B(\bar x,R)$. One can check that the field of $V^0$, sections and trivializations defined above satisfy the conditions in Definition \ref{FCE}.

Take a countable dense subset $Z_d\subset P_d(X)$ consisting of all rational points in $P_d(X)$, i.e. the point $\sum_{x\in X}t_xx\in P_d(X)$ with all coefficients $t_x$ taking rational value. Fix $x_0\in X$, denote $K_{d,m}=B_{P_d(X)}(x_0,2^m)$ to be the bounded subset of $P_d(X)$ for each $m\in\IN$ and $d\geq0$ and $U_{d,m}=X\backslash K_{d,m}$. By the definition of fibred coarse embedding, there exists a sequence of non-negative real number $(l_{d,m})_{m\in\IN}$ such that
\begin{itemize}
\item[(1)]there exists a trivialization $t_x$ for $B(x,l_{d,m})$ as in Definition \ref{FCE} for each $x\in U_{d,m}$ and $m\in\IN_+$;
\item[(2)]$(l_{d,m})$ is non-decreasing and unbounded, i.e. $0\leq l_{d,1}\leq\cdots\leq l_{d,m}\leq\cdots$ and $\lim\limits_{m\to\infty}l_{d,m}=\infty$.
\end{itemize}

Now, we are ready to define the twisted algebras.

\begin{Def}\label{twisted Roe algebra}
For each $d\geq 0$, define $\IC_{\infty}[P_d(X),\A(B)]$ to be the set of all equivalence classes of $[T]$, where $T:Z_d\times Z_d\to\A(B)\wox\K$ is a bounded function satisfying the following conditions\begin{itemize}
\item[(1)]for any bounded subset $F\subset X$, we have that
$$\#\{(x,y)\in(F\times F)\cap(Z_d\times Z_d)\mid T(x,y)\ne 0\}<\infty;$$
\item[(2)]there exists $L>0$ such that
$$\#\{y\in Z_d\mid T(x,y)\ne0\}\leq L,\quad\#\{y\in Z_d\mid T(y,x)\ne0\}\leq L$$
for all $x\in Z_d$;
\item[(3)]there exists $R\geq 0$ such that $T(x,y)=0$ whenever $d(x,y)>R$ for $x,y\in Z_d$, we denote $\prop(T)=R$ the propagation of the representative element $T$;
\item[(4)]there exists $r>0$ such that for all $x,y\in Z_d$, we have that
$$\supp(T(x,y)\subseteq B_{B\times \IR_+}(t_x(x)(s(x)),r)$$
where, $t_x=t_{x,l_{d,m}}$ is the trivialization for $x\in U_{d,m}\cap K_{d,m+1}$ and $s$ is the section as in Definition \ref{FCE}.
\end{itemize}
The equivalence relationship $\sim$ is defined by $T\sim S$ if and only if
$$\lim_{m\to\infty}\sup_{x,y\in U_{d,m}}\|T(x,y)-S(x,y)\|_{\A(B)\wox\K}=0.$$
\end{Def}

The product structure for $\IC_{\infty}[P_d(X),\A(B)]$ is defined as follow. For any $[T],[S]\in\IC_{\infty}[P_d(X),\A(B)]$, we define
$$TS:Z_d\times Z_d\to\A(B)\wox\K$$
to be the function such that there exists a sufficiently large $M\in\IN$ depending only on the $\prop(T)$ and $\prop(S)$ such that $TS(x,y)=\sum_{z\in Z_d}T(x,z)\left((t_{xz})_*(S(z,y))\right)$ for all $(x,y)\in U_{d.m}\times U_{d,m}$ with $m>M$ where $(t_{zx})_*$ is as in Definition \ref{translation} and $T(x,y)=0$ otherwise. The product of $[T]$ and $[S]$ is defined to be $[TS]$.

The $*$-structure for $\IC_{\infty}[P_d(X),\A(B)]$ is defined by the formula
$$[T]^*=[T^*]$$
where
$$(T^*)(x,y)=(t_{xy})_*(T(y,x)^*)$$
for all $x,y\in U_{d,m}$ with $m$ large enough and $0$ otherwise.

\begin{Rem}
We will give some explanations to the product structure and $*$-structure for $\IC_{\infty}[P_d(X),\A(B)]$ here. For any $[T]\in\IC_{\infty}[P_d(X),\A(B)]$, there exists $R>0$ such that $T(x,z)=0$ for all $d(x,z)>R$. Then there exists $M>0$ such that $l_{d,m}\geq 2R$ for all $m>M$. Thus $t_{xz}$ is well-defined for all $x,z\in U_{d,m}$. Combining with the condition (2) of Definition \ref{FCE}, we can check that $TS$ defined above satisfies the condition (4) of Definition \ref{twisted Roe algebra}. Moreover, one can also check that the $*$-structure is also well-defined in a similar way.
\end{Rem}

Let $\bigoplus_{x\in X}\A(B)\wox\K$ be the $C^*$-algebraic direct sum, i.e. the algebra of all bounded sequences $(a^x)_{x\in X}$ with $a^x\in\A(B)\wox\K$ for each $x$. Let
$$B_c\left(Z_d,\bigoplus_{x\in X}\A(B)\wox\K\right)$$
be the set of all bounded functions $Z_d\to\bigoplus_{x\in X}\A(B)\wox\K$ with finite supports. For notational convenince, we write the element of $B_c\left(Z_d,\bigoplus_{x\in X}\A(B)\wox\K\right)$ by $\sum_{z\in Z_d}a_z[z]$. Consider $\IE$ to be the subset of $B_c\left(Z_d,\bigoplus_{x\in X}\A(B)\wox\K\right)$ with all elements $\sum_{z\in Z_d}a_z[z]$ with compact support satisfying that
\begin{itemize}
\item[(1)]$a_z=(a_z^x)$ and $a_z^x\in\A(B)\wox\K$ for each $z\in Z_d$ and $x\in X$;
\item[(2)]$a_z^x=0$ if $d(x,z)>l_{d,m}$ for all $z\in K_{d,m+1}\cap U_{d,m}$.
\end{itemize}
Then $\IE$ is a pre-Hilbert module over $\A(B)\wox\K$:
$$\left\langle\sum_{z\in Z_d}a_z[z],\sum_{z\in Z_d}b_z[z]\right\rangle=\sum_{z\in Z_d,x\in X}(a^x_z)^*b^x_z;$$
$$\left(\sum_{z\in Z_d}a_z[z]\right)a=\sum_{z\in Z_d}a_za[z],$$
where $(a_za)^x=a_z^xa$ for any $a\in\A(B)\wox\K$. Define the Hilbert module $E$ to be the completion of $\IE$.

For any given $[T]\in\IC_{\infty}[P_d(X),\A(B)]$, let $T$ act on $E$ by
$$T\left(\sum_{z\in Z_d}a_z[z]\right)=\sum_{z\in Z_d}\left(\sum_{y\in Z_{d,n}}T(z,y)a_y\right)[z],$$
where $(T(z,y)a_y)^x=(t_{xz})_*(T(z,y))a^x_y\in\A(B)\wox\K$ and the sum is finite for the condition (2) in Definition \ref{twisted Roe algebra}.

By Definition \ref{twisted Roe algebra}, for given $[T]\in\IC_{\infty}[P_d(X),\A(B)]$, one can verify that the representation element $\chi_{U_{d,m}}T\chi_{U_{d,m}}$ is a bounded module homomorphism which has an adjoint module homomorphism for large enough $m$ and the adjoint is compatible with the $*$-structure of $T$, which gives $[T]$ a $C^*$-norm. We shall check some details in the following remark.

\begin{Rem}
Assume that $[T]\in\IC_{\infty}[P_d(X),\A(B)]$ has propagation $R$ and the representation element $T(x,y)=0$ for all $x,y\in K_{d,m}$ with $m$ satisfies that $l_{d,m}>2R$. The reason $T$ acts as a bounded module homomorphism is similar to the proof of \cite[Proposition 12.2.4]{HIT2020}. We will only show how the $*$-structure of $T$ fits with the inner product.

For given $[T]\in\IC_{\infty}[P_d(X),\A(B)]$ , there exists $R>0$ such that $T(z,y)=0$ whenever $d(y,z)>R$ for $x,z\in Z_d$. By the definition, we can comupute that
\begin{equation}\begin{split}\nonumber
\left\langle \sum_{z\in Z_d}a_z[z], T\sum_{z\in Z_d}b_z[z]\right\rangle&=\sum_{x\in X,z\in Z_d}(a^x_z)^*\left(\sum_{y\in Z_{d,n}}(T(z,y)b_y)^x\right)\\
&=\sum_{x\in X,y,z\in Z_d}(a_z^x)^*(t_{xz})_*\left(T(z,y)\right)b^x_y.
\end{split}\end{equation}
Similarly, we can also comupute that
\begin{equation}\begin{split}\nonumber
\left\langle T^*\sum_{z\in Z_d}a_z[z],\sum_{z\in Z_d}b_z[z]\right\rangle&=\sum_{x\in X,z\in Z_d}\left(\sum_{y\in Z_d}(T^*(z,y)a_y)^x\right)^*(b_z^x)\\
&=\sum_{x\in X,y,z\in Z_d}\left((t_{xz})_*(T^*(z,y))a^x_y\right)^*b_z^x.
\end{split}\end{equation}
Combining the $*$-structure of $\IC_{\infty}[P_d(X),\A(B)]$ and the fact that $t_{xy}=t_{xz}t_{zy}$ whenever $z\in B(x,R)\cap B(y,R')$ such that there exists a trivialization on $B(x,R)$ and $B(y,R')$ as in Definition \ref{FCE}, we have that
\begin{equation}\begin{split}\nonumber
\sum_{x\in X,y,z\in Z_d}\left((t_{xz})_*(T^*(z,y))a^x_y\right)^*b_z^x&=\sum_{x\in X,y,z\in Z_d}(a_y^x)^*(t_{xy})_*(T(y,z))b_z^x\\
&=\sum_{x\in X,y,z\in Z_d}(a_z^x)^*(t_{xz})_*\left(T(z,y)\right)b^x_y.
\end{split}\end{equation}
Thus we have show that the representation is compatible with the the $*$-structure. With a similar argument, we can check that the representation is also compatible under the multiplication.
\end{Rem}

\begin{Def}
The twisted Roe algebra at infinity $C^*_{\infty}(P_d(X),\A(B))$ is defined to be the operator norm closure of $\IC_{\infty}[P_d(X),\A(B)]$ with respect to the norm
$$\|[T]\|=\limsup_{m\in\IN}\|\chi_{U_{d,m}}T\|,$$
where $T$ and $\chi_{U_{d,m}}$ are viewed as bounded operators on $E$.
\end{Def}

\begin{Def}\label{twisted localization algebra}
Let $\IC_{L,\infty}[P_d(X),\A(B)]$ be the set of all bounded, uniformly norm-continuous functions
$$g:\IR_+\to\IC_{\infty}[P_d(X),\A(B)]$$
such that $g$ can be viewed as an equivalent class $[g]$ of $g$, where $g(t)$ is a function from $Z_d\times Z_d\to\A(B)\wox\K$ for each $t$ and satisfies the following conditions\begin{itemize}
\item[(1)]there exists a bounded function $R(t):\IR_+\to\IR_+$ with $\lim\limits_{t\to\infty}R(t)=0$ such that $(g(t))(x,y)=0$ whenever $d(x,y)>R(t)$;
\item[(2)]there exists $r>0$ such that
$$\supp(g(t))\subseteq B_{B}(t_x(x)(s(x)),r)$$
for all $t\in\IR_+$ and $x,y\in Z_d$, where $x\in K_{d,m+1}\cap U_{d,m}$ for some $m>0$ and $t_x=t_{x,l_{d,m}}:(B_z)_{z\in B(x,l_{d,m})}\to B(x,l_{d,m})\times B$ is the trivialization.
\end{itemize}
We remark that $[g]=[h]$ if there exists $m>0$ such that $\chi_{U_m}(g(t)-h(t))=0$ for all $t\in\IR_+$.
\end{Def}

\begin{Def}
The twisted localization algebra at infinity $C^*_{L,\infty}(P_d(X),\A(B))$ is defined to be the norm completion of $\IC_{L,\infty}[P_d(X),\A(B)]$ with respect to the norm
$$\|g\|=\sup_{t\in\IR_+}\|g(t)\|.$$
\end{Def}

The evaluation homomorphism at infinity
$$ev_{\infty}^{\A}:C^*_{L,\infty}(P_d(X),\A(B))\to C^*_{\infty}(P_d(X),\A(B))$$
is defined by $ev_{\infty}^{\A}(g)=g(0)$. which induces a homomorphism on $K$-theory:
$$(ev_{\infty}^{\A})_*:K_*(C^*_{L,\infty}(P_d(X),\A(B)))\to K_*(C^*_{\infty}(P_d(X),\A(B))).$$

\subsection{The twisted coarse Baum-Connes conjecture at infinity}

In this subsection, we will prove the following theorem

\begin{Thm}\label{twisted Baum-Connes}
The $K$-theoretic evaluation homomorphism at infinity
$$(ev_{\infty}^{\A})_*:K_*(C^*_{L,\infty}(P_d(X),\A(B)))\to K_*(C^*_{\infty}(P_d(X),\A(B)))$$
is an isomorphism.
\end{Thm}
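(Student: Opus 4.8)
The plan is to follow the standard localization-algebra strategy pioneered by G.~Yu in \cite{Yu2000} and adapted in \cite{CWY2013,CWY2015}: decompose the twisted algebras over the Banach space into pieces indexed by a uniformly bounded cover coming from the fibred coarse embedding, prove the evaluation map is an isomorphism on each piece using a Mayer--Vietoris and Eilenberg-swindle argument, and then assemble. First I would introduce, for each $r>0$, the ideals $C^*_{\infty}(P_d(X),\A(B))_r$ (and the corresponding localization version) consisting of equivalence classes $[T]$ for which $\supp(T(x,y))$ is contained in the $r$-ball around $t_x(x)(s(x))$ \emph{and} the ``$\A(B)$-support'' in the $B$-direction has controlled diameter; these give a filtration of the twisted Roe algebra at infinity whose union is dense. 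Since $K$-theory commutes with direct limits, it suffices to prove $(ev^{\A}_{\infty})_*$ is an isomorphism on each filtration piece.

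Next I would localize in the $X$-variable. For a subspace $Y\subseteq P_d(X)$ that is ``far out'' (contained in some $U_{d,m}$), one defines $C^*_{\infty}(P_d(X),\A(B))_{Y}$ and its localized analogue, and one shows: (i) the functor $Y\mapsto K_*(C^*_{L,\infty}(P_d(X),\A(B))_Y)$ and $Y\mapsto K_*(C^*_{\infty}(P_d(X),\A(B))_Y)$ both satisfy a coarse Mayer--Vietoris sequence (this is where one uses condition (2) of Definition \ref{FCE}, i.e.\ the cocycle relation $t_{x,R}(z)\circ t^{-1}_{y,R}(z)=t_{xy,R}$ on overlaps, to glue the trivializations coherently); (ii) both functors are invariant under the strong Lipschitz homotopies used in \cite[Section 3]{Yu1997}; and (iii) for a ``$0$-dimensional'' piece — a uniformly discrete, uniformly separated family of points, each carrying a copy of $\A(B)\wox\K$ — the evaluation map $ev^{\A}_{\infty}$ is an isomorphism. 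Claim (iii) is the genuine input: on a single point $x$ the twisted localization algebra is essentially $C^*_{L}(\textup{pt})\wox\A(B)\wox\K$ localized at the point $t_x(x)(s(x))\in B$, and $ev^{\A}_{\infty}$ reduces to the ordinary localization-to-Roe evaluation tensored with $\A(B)\wox\K$, which is an isomorphism by the fact that $C^*_L(\textup{pt})\to C^*(\textup{pt})$ induces an isomorphism on $K$-theory (a classical consequence of \cite{Yu1997}); passing to the product over the point family and then quotienting by the ``vanishing at infinity'' relation $\sim$ preserves this because the constants in Definition \ref{twisted Roe algebra} are uniform. Then a five-lemma induction up the skeleta of $P_d(X)$, combined with the Mayer--Vietoris sequences of (i), upgrades (iii) to arbitrary finite-dimensional pieces; since $P_d(X)$ is finite-dimensional for fixed $d$ (bounded geometry), this finishes each filtration piece.

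The main obstacle I expect is the bookkeeping in claim (i): making the coarse Mayer--Vietoris sequence work at infinity simultaneously with the twisting by $\A(B)$. Two technical points require care. First, the cover of $P_d(X)$ used in Mayer--Vietoris must be chosen so that on each overlap the trivializations $t_x$ agree up to the affine isometry $t_{xy}$, and the induced isomorphisms $(t_{xy})_*$ of $\A(B)$ (Definition \ref{translation}) must be compatible across the cover; this is exactly the reason the algebra $\A(B)$ was built in Section 3 to be invariant under \emph{all} affine isometries of $B$, rather than Kasparov--Yu's $\S\C(B)$ which is tied to a fixed dense subspace. Second, one must ensure that when restricting to $U_{d,m}$ and taking $m\to\infty$ (the equivalence relation $\sim$), the boundary maps in the Mayer--Vietoris sequences are compatible with the quotient by the ideal of elements supported in bounded sets; this follows because all the relevant propagation and support bounds in Definitions \ref{twisted Roe algebra} and \ref{twisted localization algebra} are uniform in $m$, so the ideal is genuinely a two-sided ideal and the quotient construction is functorial. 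Once these compatibilities are in place, the rest is a routine diagram chase, so I would expect the proof to occupy a few pages but contain no further conceptual surprises.
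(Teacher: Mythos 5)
Your first reduction (filtering by the support radius $r$ in the $B$-direction and using that $K$-theory commutes with direct limits) matches the paper, and your remarks about the cocycle relation $t_{x,R}(z)\circ t_{y,R}^{-1}(z)=t_{xy,R}$ and the need for $\A(B)$ to be invariant under all affine isometries are on target. But the core of your second paragraph --- claim (ii) that \emph{both} functors are strongly Lipschitz homotopy invariant, and the ``five-lemma induction up the skeleta of $P_d(X)$'' starting from the $0$-dimensional case (iii) --- is the wrong mechanism, and it fails. The twisted Roe algebra at infinity is only a coarse invariant, not a homotopy invariant, and the skeleton filtration of $P_d(X)$ is not coarsely excisive: the $0$-skeleton is coarsely dense in $P_d(X)$, so on the Roe side ``restricting to $\Delta^{(0)}$'' changes nothing, and $C^*_{\infty}(\Delta^{(0)},\A(B))$ is \emph{not} a product-modulo-sum of single-point algebras when $X$ is coarsely connected --- operators of positive propagation couple different points, so your base case (iii) simply does not describe it, and asserting the evaluation isomorphism there is asserting the theorem itself. (Skeleta induction is exactly the tool used when \emph{both} sides are localization-type algebras, as in Theorem \ref{BottL} and the $q_*$ theorem of Section 4; if it applied to the evaluation map into the Roe algebra it would prove the coarse Baum--Connes conjecture for every bounded geometry space, with no embedding hypothesis, which is false for expanders.)

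What the paper does instead, and what your outline is missing, is to cut in the $B$-direction rather than along skeleta. For fixed $r$, bounded geometry gives a finite partition $X=\bigsqcup_{k=1}^{N_r}X_k$ such that the images $t_z(x)(s(x))$ of points of $X_k$ near a common $z$ are $2r$-separated; the associated coherent systems $O^{(r)}_k$ (Definition \ref{coherent}) are then $(X_k,r)$-separated as in Proposition \ref{coarse disjoint union}. For such a separated system, the support condition together with the \emph{properness of} $\rho_-$ from Definition \ref{FCE} forces each block to be supported over a uniformly bounded ball $B(g,R)\subset P_d(X)$, which yields the identification of the ideal with $\lim_R A^*_{\infty}(G_R)$, a product-modulo-sum of algebras $C^*(B(g,R))\ox\A(O_{g,g})$ (Proposition \ref{Roe to A}); this is the only place homotopy enters: the family of balls $(B(g,R))_g$ is strongly Lipschitz homotopy equivalent to the family of points, and the kernel of evaluation $A^*_{L,0,\infty}(G_R)$ has vanishing $K$-theory by an Eilenberg swindle (Lemma \ref{Yu's lemma}), giving the evaluation isomorphism for each separated piece. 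Finitely many Mayer--Vietoris steps for unions and intersections of coherent systems (Lemma \ref{MV-argument}) --- not a coarse Mayer--Vietoris in the $X$-variable --- then assemble the $N_r$ pieces into $O^{(r)}$, and the limit over $r$ finishes. So your proposal needs to replace the skeleta induction and the homotopy-invariance claim for the Roe-side functor by this separation-plus-uniform-boundedness argument; without it there is a genuine gap at the decisive step.
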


The strategy is similar to the proof in \cite{CWY2013,Yu2000} by using cutting and pasting techniques. To begin with, we will first introduce the ideals of the twisted algebras at infinity associated with open subsets of $B$.

\begin{Def}\label{coherent}
A collection $O=(O_x)_{x\in X}$ of open subsets of $B$ is said to be coherent if there exists $r>0$ such that for all but finite $m\in\IN$, we have that
\begin{equation}\label{eq*}t_{xy}(O_y\cap B(t_y(y)(s(y)),r)=O_x\cap B(t_x(y)(s(y)),r),\end{equation}
for all $x,y\in U_{d,m}$ with $d(x,y)\leq l_{d,m}$.
\end{Def}

\begin{Exa}
Fix $r>0$. We define
$$O_x=\bigcup_{z\in B_{X_n}(x,l_n)}B(t_x(z)(s(z)),r)$$
for all $x\in X$. Then the collection
$$O=(O_x)_{x\in X}$$
is certainly a coherent system of open subsets.
\end{Exa}

For any two coherent collections $O^{(1)}$ and $O^{(2)}$ of open subsets of $V^0$, we say $O^{(1)}\subset O^{(2)}$ if $O^{(1)}_x\subset O^{(2)}_x$ for all $x\in X$. Denote $O^{(1)}\cup O^{(2)}=(O^{(1)}_x\cup O^{(2)}_x)$ and $O^{(1)}\cap O^{(2)}=(O^{(1)}_x\cap O^{(2)}_x)$. It is easy to check both $O^{(1)}\cap O^{(2)}$ and $O^{(1)}\cup O^{(2)}$ are coherent.

\begin{Def}
Let $O=(O_x)_{x\in X}$ be a coherent family of open subsets of $B$. We define
$$\IC_{\infty}[P_d(X),\A(B)]_O$$
to be the $*$-subalgebra of $\IC_{\infty}[P_d(X),\A(B)]$ generated by the equivalence class of $[T]$ such that
$$\supp(T(x,y))\subseteq O_{\bar x}$$
for all $x,y\in Z_d$ with $x\in B_{d,\bar{x}}$ and $n\in\IN$, where $\bar x$ is defined such that $x$ belongs to the Borel set $B_{d,\bar x}$ as before.

Define
$$\IC_{L,\infty}[P_d(X),\A(B)]_O$$
to be the $*$-subalgebra of $\IC_{L,\infty}[P_d(X),\A(B)]$ consisting of all functions
$$g:\IR_+\to\IC_{\infty}[P_d(X),\A(B)]_O.$$
Define $C^*_{\infty}(P_d(X),\A(B))_O$ and $C^*_{L,\infty}(P_d(X),\A(B))_O$ to be the norm closures of $\IC_{\infty}[P_d(X),\A(B)]_O$ and $\IC_{L,\infty}[P_d(X),\A(B)]_O$, respectively.
\end{Def}

It is easy to check that $C^*_{\infty}(P_d(X),\A(B))_O$ is a well-defined two-side ideal of $C^*_{\infty}(P_d(X),\A(B))$ for each coherent family of open set $O$. The equation \eqref{eq*} in Definition \ref{coherent} guarantees that the algebraic structure can be induced from $C^*_{\infty}(P_d(X),\A(B))$.

\begin{Lem}\label{MV-argument}
Let $O^{(1)}$ and $O^{(2)}$ be coherent families of open subsets of $V^0$. Then we have
\begin{itemize}
\item[(1)]$C^*_{\infty}(P_d(X),\A(B))_{O^{(1)}}+C^*_{\infty}(P_d(X),\A(B))_{O^{(2)}}=C^*_{\infty}(P_d(X),\A(B))_{O^{(1)}\cup O^{(2)}}$;
\item[(2)]$C^*_{\infty}(P_d(X),\A(B))_{O^{(1)}}\cap C^*_{\infty}(P_d(X),\A(B))_{O^{(2)}}=C^*_{\infty}(P_d(X),\A(B))_{O^{(1)}\cap O^{(2)}}$;
\item[(3)]$C^*_{\infty}(P_d(X),\A(B))_{O^{(1)}}+C^*_{\infty}(P_d(X),\A(B))_{O^{(2)}}=C^*_{\infty}(P_d(X),\A(B))_{O^{(1)}\cup O^{(2)}}$;
\item[(4)]$C^*_{\infty}(P_d(X),\A(B))_{O^{(1)}}\cap C^*_{\infty}(P_d(X),\A(B))_{O^{(2)}}=C^*_{\infty}(P_d(X),\A(B))_{O^{(1)}\cap O^{(2)}}$;
\end{itemize}\end{Lem}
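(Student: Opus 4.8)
As written, Lemma \ref{MV-argument} lists four items, but (1)=(3) and (2)=(4) verbatim; presumably items (3) and (4) are meant to be the analogous statements for the localization algebras $C^*_{L,\infty}(P_d(X),\A(B))_{O}$. The plan below treats both the Roe-algebra version and the localization-algebra version; the latter follows formally from the former applied pointwise in $t$.

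\textbf{Remark.} As stated, items (3) and (4) repeat (1) and (2) verbatim; I read them as the corresponding identities for the twisted localization algebras $C^*_{L,\infty}(P_d(X),\A(B))_O$, which I will deduce at the end.

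The plan is to check all the identities on the dense $*$-subalgebras $\IC_{\infty}[P_d(X),\A(B)]_O$ and then pass to norm closures. Two of the four containments are immediate: since $O^{(i)}_x\subseteq(O^{(1)}\cup O^{(2)})_x$ and $(O^{(1)}\cap O^{(2)})_x\subseteq O^{(i)}_x$ for all $x$, the definitions give at once $\IC_{\infty}[P_d(X),\A(B)]_{O^{(i)}}\subseteq\IC_{\infty}[P_d(X),\A(B)]_{O^{(1)}\cup O^{(2)}}$ and $\IC_{\infty}[P_d(X),\A(B)]_{O^{(1)}\cap O^{(2)}}\subseteq\IC_{\infty}[P_d(X),\A(B)]_{O^{(i)}}$; taking closures (and using that a sum of two closed ideals in a $C^*$-algebra is again a closed ideal) yields $C^*_{\infty}(P_d(X),\A(B))_{O^{(1)}}+C^*_{\infty}(P_d(X),\A(B))_{O^{(2)}}\subseteq C^*_{\infty}(P_d(X),\A(B))_{O^{(1)}\cup O^{(2)}}$ and $C^*_{\infty}(P_d(X),\A(B))_{O^{(1)}\cap O^{(2)}}\subseteq C^*_{\infty}(P_d(X),\A(B))_{O^{(1)}}\cap C^*_{\infty}(P_d(X),\A(B))_{O^{(2)}}$. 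The substance is the reverse inclusion in each line.

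The engine in both cases will be a \emph{coherent partition of unity} on $B$: for a coherent pair $O^{(1)},O^{(2)}$ with constant $r$ as in Definition \ref{coherent}, I would produce continuous maps $\phi^{(i)}_x\colon B\to[0,1]$, $i=1,2$, with $\phi^{(1)}_x+\phi^{(2)}_x\equiv 1$ on the ball $B(t_x(x)(s(x)),r)$, with $\{\phi^{(i)}_x>0\}\cap B(t_x(x)(s(x)),r)\subseteq O^{(i)}_x$, and with $\phi^{(i)}_x\circ t_x(y)=\phi^{(i)}_y\circ t_y(y)$ on $B(t_y(y)(s(y)),r)$ whenever $x,y\in U_{d,m}$, $d(x,y)\le l_{d,m}$ and $m$ is large. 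Such a family is built by fixing the $\phi^{(i)}$ on a set of representatives for the coherence relation and transporting them by the affine isometries $t_{xy}$ via \eqref{eq*}, and by taking each $\phi^{(i)}_x$ to be a finite sum of even Bott elements $\beta_{x_0}(f_0)$ with $f_0\in C_0(\IR)_{ev}$ localised at points of the ball; being scalar-valued elements of $\A(B)$, the $\phi^{(i)}_x$ are central multipliers of $\A(B)\wox\K$ whose behaviour under the maps $t_*$ of Definition \ref{translation} is controlled.

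For the reverse inclusion in (1), I would take $[T]\in\IC_{\infty}[P_d(X),\A(B)]_{O^{(1)}\cup O^{(2)}}$ of propagation $R$, fix $M$ with $l_{d,m}>2R$ for $m>M$ so that the cocycle relation $t_{xy}=t_{xz}t_{zy}$ holds on the relevant overlaps, and set $T_i(x,y):=\phi^{(i)}_{\bar x}\cdot T(x,y)$. Then $T=T_1+T_2$; condition (4) of Definition \ref{twisted Roe algebra} confines $\supp(T(x,y))$ to $B(t_x(x)(s(x)),r)$, so $\supp(T_i(x,y))\subseteq O^{(i)}_{\bar x}$, while coherence of $(\phi^{(i)}_x)_x$ is precisely what makes $[T_i]$ compatible with the product and $*$-operation, hence a bona fide element of $\IC_{\infty}[P_d(X),\A(B)]_{O^{(i)}}$; closing up gives (1). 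For the reverse inclusion in (2), given $a$ in the intersection and $\varepsilon>0$, I would pick $b\in\IC_{\infty}[P_d(X),\A(B)]_{O^{(1)}}$ and $c\in\IC_{\infty}[P_d(X),\A(B)]_{O^{(2)}}$ with $\|a-b\|,\|a-c\|<\varepsilon$, then build a coherent family $(\psi_x)_x$ of scalar elements of $\A(B)$ with $0\le\psi_x\le 1$, $\{\psi_x>0\}\subseteq O^{(2)}_x$, and $\psi_{\bar x}\equiv 1$ on $\bigcup_y\supp(c(x,y))$ for each $x$ (possible because that union lies in a fixed ball by condition (4) and inside $O^{(2)}_{\bar x}$ since $[c]\in\IC_{\infty}[P_d(X),\A(B)]_{O^{(2)}}$). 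Setting $b'(x,y):=\psi_{\bar x}\,b(x,y)$ one has $\supp(b'(x,y))\subseteq O^{(1)}_{\bar x}\cap O^{(2)}_{\bar x}$, so $[b']\in\IC_{\infty}[P_d(X),\A(B)]_{O^{(1)}\cap O^{(2)}}$, while $\psi_{\bar x}\,c(x,y)=c(x,y)$ and $\|\psi\|\le 1$ give $\|a-[b']\|\le\|a-c\|+\|c-b\|<3\varepsilon$; letting $\varepsilon\to0$ proves $a\in C^*_{\infty}(P_d(X),\A(B))_{O^{(1)}\cap O^{(2)}}$.

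I expect the main obstacle to be exactly the construction of these coherent cutoff families: since $B\times\IR_+$ is not locally compact there is no ambient $C_0$-algebra to draw on, so one must check that partitions of unity subordinate to the $O^{(i)}_x$ — which in the situations that arise are locally finite unions of balls, as in the Example following Definition \ref{coherent} — can be assembled from even Bott elements (hence multiply into $\A(B)$) and can be made simultaneously compatible with all trivializations $t_{xy}$ through \eqref{eq*}; everything else is routine bookkeeping with propagation and the $\limsup_m\|\chi_{U_{d,m}}(\cdot)\|$ norm, for which the requirement that coherence hold only for all but finitely many $m$ is exactly matched to the ``at infinity'' equivalence relation. Finally, items (3) and (4) for $C^*_{L,\infty}(P_d(X),\A(B))_O$ follow by running the same argument on $g(t)$ for each $t\in\IR_+$: the cutoffs $\phi^{(i)}_x$ and $\psi_x$ may be chosen independent of $t$, all estimates above are uniform in $t$, and the norm on the twisted localization algebra is $\sup_t\|g(t)\|$, so the four identities descend verbatim.
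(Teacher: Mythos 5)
Your proposal is correct and follows essentially the same route as the paper: the paper proves part (1) by exactly your cutoff argument---choosing for each $x$ functions $h^{(1)}_x,h^{(2)}_x$ with $h^{(1)}_x+h^{(2)}_x=1$ on the support of $T(x,\cdot)$ and subordinate to $O^{(1)}_x,O^{(2)}_x$ (citing Yu's Lemma 6.3 for the partition of unity) and setting $T^{(i)}(x,y)=h^{(i)}_xT(x,y)$---and dismisses the remaining parts as similar, which your treatment of (2), of the membership of the cutoffs in $\A(B)$ via even Bott elements, and of the localization-algebra versions fills in. One minor remark: requiring the cutoff family $(\phi^{(i)}_x)_x$ itself to be coherent is unnecessary, since membership in $\IC_{\infty}[P_d(X),\A(B)]_{O^{(i)}}$ is only a per-$x$ support condition and multiplying entrywise by uniformly bounded scalar cutoffs preserves all the remaining conditions of Definition \ref{twisted Roe algebra}.
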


\begin{proof}
The proof is similar to the proof of \cite[Lemma 6.3]{Yu2000}. We will only prove part (1) and the rest can be proved in a similar way. It suffices to show that for any $[T]\in\IC_{\infty}[P_d(X),\A(B)]_{O^{(1)}\cup O^{(2)}}$, there exists $[T^{(1)}]$ and $[T^{(2)}]$ such that $[T^{(1)}]+[T^{(2)}]=[T]$ and
$$[T^{(i)}]\in\IC_{\infty}[P_d(X),\A(B)]_{O^{(i)}},\qquad i=1,2.$$

With no loss of generality, for each $x\in Z_d$, we assume there exist bounded subsets $C^{(1)}_x$ and $C^{(2)}_x$ such that
$$\supp(T(x,y))\subseteq C^{(1)}_x\cup C^{(2)}_x\subseteq O^{(1)}_x\cup O^{(2)}_x$$
for all $y\in Z_d$. By using a partition of unity similar as \cite[Lemma 6.3]{Yu2000}, there exists $h_x^{(1)}\in C_0(O^{(1)}_x)$ and $h_x^{(2)}\in C_0(O_x^{(2)})$ such that
$$h_x^{(1)}(v)+h_x^{(2)}(v)=1$$
for all $v\in C_x^{(1)}\cup C_x^{(2)}$. Define
$$T^{(1)}(x,y)=h_x^{(1)}T(x,y)\quad\text{ and }\quad T^{(2)}(x,y)=h_x^{(2)}T(x,y).$$
Then we have
$$T^{(1)}+T^{(2)}=T\quad\text{ and }\quad [T^{(i)}]\in\IC_{\infty}[P_d(X),\A(B)]_{O^{(i)}},\qquad i=1,2.$$
We complete the proof.
\end{proof}

\begin{Pro}\label{coarse disjoint union}
Let $G$ be a subset of $X$ and $O=(O_x)_{x\in X}$ be a coherent collection of open subsets of $B$ such that \begin{itemize}
\item[(1)]each $O_x$ splits as a disjoint union $O_x=\bigsqcup_{g\in G\cap B(x,l_{d,m})}O_{x,g}$ of open subsets;
\item[(2)]there exists $r>0$ for each $x$ such that
$$O_{x,g}\subseteq B(t_x(g)(s(g)),r);$$
\item[(3)]for any $x,y\in U_{d,m}$ with $g\in B(x,l_{d,m})\cap B(y,l_{d,m})$, we have that
$$t_{xy}(O_{y,g})=O_{x,g}.$$
\end{itemize}
Then the $K$-theortic homomorphism
$$ev_*:\lim_{d\to\infty}K_*(C^*_{L,\infty}(P_d(X),\A(B))_O)\to \lim_{d\to\infty}K_*(C^*_{\infty}(P_d(X),\A(B))_O)$$
induced by the evaluation-at-zero map is an isomorphism.
\end{Pro}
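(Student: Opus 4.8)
The plan is to follow the cutting-and-pasting scheme of \cite{Yu2000,CWY2013}: conditions (1)--(3) force the ideals $C^*_\infty(P_d(X),\A(B))_O$ and $C^*_{L,\infty}(P_d(X),\A(B))_O$ to be ``essentially $0$-dimensional over $G$'', and for algebras of that shape the evaluation-at-zero map is a $K$-theory isomorphism. Fix $d$. Since each $O_{\bar x}$ is a \emph{disjoint} union $\bigsqcup_{g\in G}O_{\bar x,g}$ of open subsets of $B$, any representative $T$ with $\supp(T(x,y))\subseteq O_{\bar x}$ splits canonically as a locally finite sum $T=\sum_{g\in G}T_g$, where $T_g(x,y)$ is the part of $T(x,y)$ supported over $O_{\bar x,g}$. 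The first task is to check that this $G$-grading is compatible with the whole structure of $\IC_\infty[P_d(X),\A(B)]_O$: using the coherence identity $t_{xy}(O_{y,g})=O_{x,g}$ of condition (3) together with the disjointness of the $O_{\bar x,g}$, one verifies $(TS)_g=\sum_{z}T_g(x,z)\big((t_{xz})_*S_g(z,y)\big)$ and $(T^*)_g=(T_g)^*$, so that the product, involution and $C^*$-norm all respect the grading, and likewise on the localization side, compatibly with $ev_{\infty}^{\A}$.

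Next comes the key localization estimate. If $T_g(x,y)\neq 0$, then $\supp(T_g(x,y))$ lies in $O_{\bar x,g}$ and, by condition (4) of Definition \ref{twisted Roe algebra}, in the $r'$-ball around $t_x(x)(s(x))$; hence by condition (2) of the Proposition this intersection is nonempty, so $\|t_x(g)(s(g))-t_x(x)(s(x))\|_p<r+r'$, and applying condition (1) of Definition \ref{FCE} to the trivialization $t_x$ gives $\rho_-(d(x,g))<r+r'$, so $d(x,g)\le D$ for a constant $D$ depending only on $r$, $r'$ and $\rho_-$, and then $d(y,g)\le D+\prop(T)$. Thus $T_g$ is supported, in the $P_d(X)$-variables, in a $D$-neighbourhood of $g$, which by bounded geometry meets only a uniformly bounded number of points; using the trivializations and condition (3) to identify all the relevant $O_{x,g}$ with a single bounded open set $\widetilde O_g\subseteq B$, one obtains a $*$-isomorphism of $C^*_\infty(P_d(X),\A(B))_O$ with the quotient of the full product $\prod_{g\in G}\mathfrak A_g$ by the ideal $\{(\mathfrak a_g):\|\mathfrak a_g\|\to 0\text{ as }g\to\infty\}$, each $\mathfrak A_g$ being a matrix algebra over the ideal of $\A(B)\wox\K$ of elements supported in $\widetilde O_g$. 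In other words, $C^*_\infty(P_d(X),\A(B))_O$ is the Roe algebra at infinity of the coarse disjoint union $\bigsqcup_{g\in G}B_{P_d(X)}(g,D)$ with these (uniformly) bounded coefficients, $C^*_{L,\infty}(P_d(X),\A(B))_O$ is the matching localization algebra at infinity, and $ev_{\infty}^{\A}$ is the evaluation map.

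Finally, this reduced product is handled exactly as in the untwisted situation. For a single bounded metric space with coefficients in a fixed $C^*$-algebra, the evaluation-at-zero map is a $K$-theory isomorphism, by the Eilenberg-swindle / strongly-Lipschitz-homotopy / Mayer--Vietoris argument of \cite{Yu1997} --- the same argument proving that $q_*\colon K^\infty_*(P_d(X))\to K_*(C^*_{L,\infty}(P_d(X)))$ is an isomorphism --- which reduces matters to the $0$-dimensional case, where both algebras are explicit. Since all the constants ($r$, $r'$, $D$ and the local cardinality bound) are uniform in $g$, the requisite homotopies can be chosen uniformly, so $ev$ induces isomorphisms on the $K$-theory of $\prod_{g\in G}\mathfrak A_g$ and of the ideal $\{(\mathfrak a_g):\|\mathfrak a_g\|\to 0\}$, and the Five Lemma applied to the two extensions gives the isomorphism for the reduced product, hence for $C^*_\infty(P_d(X),\A(B))_O$. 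Passing to the direct limit over $d$, using that all of the above is natural with respect to the simplicial inclusions $P_d(X)\hookrightarrow P_{d'}(X)$, completes the proof. \textbf{The main obstacle} is the bookkeeping in the first two paragraphs: verifying that the product, involution, norm, and especially the ``at infinity'' equivalence relation all descend to the $G$-graded picture, and that the result is precisely the reduced product above with all estimates uniform in $g$ and $m$; granting that identification, the homotopy-theoretic step in the last paragraph is routine.
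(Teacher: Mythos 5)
Your proposal follows essentially the same route as the paper: the disjointness and coherence hypotheses give the $G$-graded splitting of any $[T]$, the support condition plus $\rho_-$ gives the estimate $d(x,g)\leq D$, and this identifies the twisted (localization) algebras with the product-over-sum algebras built from the uniformly bounded family of balls $B(g,R)$ (the paper's Proposition \ref{Roe to A}), after which uniform strong Lipschitz homotopy/Eilenberg swindle arguments (the paper's Lemma \ref{Yu's lemma}, quoted from Yu) show the evaluation map is a $K$-theory isomorphism. The only caveats, which do not change the substance, are that the identification should be with the direct limit over $R$ of the algebras $A^*_{\infty}(G_R)$ of tuples with \emph{uniform} constants rather than a single full reduced product (your $D$ depends on the element), and the final $K$-theory step must be run at the level of these product-type algebras via the uniform swindle (as in the paper, by showing the kernel $A^*_{L,0,\infty}(G_R)$ has vanishing $K$-theory) rather than componentwise, since $K$-theory does not commute with infinite products.
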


Such a coherent collection in Proposition \ref{coarse disjoint union} is also called $(G,r)$-separated. We still need some preparations before we prove it. For each $g\in G$, define $Y_g$ to be a bounded subset of $P_d(X)$. Then the collection $Y=(Y_g)_{g\in G}$ forms a sequence of bounded subspaces of $X$. Assume that $Y$ satisfies\begin{itemize}
\item[(1)]$g\in Y_g$ for all $g\in Y_g$;
\item[(2)]$Y_g$ is uniformly bounded, i.e. there exists $R>0$ such that $\diam(Y_g)\leq R$ for all $g\in G$.
\end{itemize}
For example, we can take $Y_g=\{g\}$ for each $g\in G$ or $Y_g=B(g,R)$ for some given $R>0$ and all $g\in G$. For the notational convenience, we denote $G_R=(B(g,R))_{g\in G}$ when $Y_g=B(g,R)$. Specially, $G_0=(\{g\})_{g\in G}$.

We denote $\A(O_{g,g})$ to be the subalgebra of $\A(B)$ with all functions whose supports are in $O_{g,g}$. With the notation above, we introduce the following algebra:

\begin{Def}\label{A-algebra}
Define $\IA_{\infty}[Y]$ to be the subalgebra of
\begin{equation}\label{eq algebra}\frac{\prod_{g\in G}C^*(Y_g)\ox\A(O_{g,g})}{\bigoplus_{g\in G}C^*(Y_g)\ox\A(O_{g,g})}\end{equation}
generated by all equivalence class $[(T_g)_{g\in G}]$ of tuples $(T_g)_{g\in G}$ described as follows:\begin{itemize}
\item[(1)]$T_g$ is a bounded function from $(Z_d\times Z_d)\cap(Y_g\times Y_g)$ to $\A(B)\wox\K$ for all $g\in G$ such that
$$\sup_{g\in G}\sup_{x,y\in Z_d}\|T_g(x,y)\|<\infty;$$
\item[(2)]for any bounded subset $B\subset Y_g\cap Z_d$, we have
$$\#\{(x,y)\in B\times B\mid T(x,y)\ne0\}<\infty;$$
\item[(3)]there exists $L>0$ such that
$$\#\{y\in Z_d\mid T_g(x,y)\ne0\}\leq L,\quad\#\{y\in Z_d\mid T_g(y,x)\ne0\}\leq L$$
for all $x\in Y_g\cap Z_d$ and $g\in G$;
\item[(4)]there exists $R\geq 0$ such that $T_g(x,y)=0$ whenever $d(x,y)>R$ for all $g\in G$, the least such $R$ is called the propagation of the representative element $T$, denoted by $\prop(T)$;
\item[(5)]there exists $r>0$ such that for all $x,y\in Y_g\cap Z_d$ and $g\in G$, then
$$\supp(T_g(x,y))\subseteq O_{g,g}$$
for all $n\in\IN$.
\end{itemize}
\end{Def}

The algebraic structure of $\IA_{\infty}[Y]$ is defined by viewing $\IA_{\infty}[Y]$ as the subalgebra of the $C^*$-algebra in equation \eqref{eq algebra} and $A^*_{\infty}(Y)$ is defined to be the norm closure of $\IA_{\infty}[Y]$ with the induced norm.

\begin{Def}
Define $\IA_{L,\infty}[Y]$ to be the $*$-algebra of all bounded, uniformly norm-continuous functions
$$f:\IR_+\to\IA_{\infty}[Y]$$
such that $f=(f_g)$ satisfies the conditions in Definition \ref{twisted localization algebra} with uniform constants.

Define $A^*_{L,\infty}(Y)$ to be the completion of $\IA_{L,\infty}[Y]$ with respect to the norm
$$\|f\|=\sup_{t\in\IR_+}\|f(t)\|.$$
\end{Def}

Let $ev_A$ be the evaluation map
$$ev_A:A^*_{L,\infty}(Y)\to A^*_{\infty}(Y)$$
defined by $ev_A(g)=g(0)$.

\begin{Pro}\label{Roe to A}
Suppose that $O=(O_x)$ is a coherent collection of open subsets of $B$ which is $(G,r)$-separated for some $G$. We denote $G_R=(B(g,R))_{g\in G}$ to be the family of subsets of $P_d(X)$. Then we have\begin{itemize}
\item[(1)]$C^*_{\infty}(P_d(X),\A(B))_O\cong\lim_{R\to\infty}A^*_{\infty}(G_R)$;
\item[(2)]$C^*_{L,\infty}(P_d(X),\A(B))_O\cong\lim_{R\to\infty}A^*_{L,\infty}(G_R)$.
\end{itemize}\end{Pro}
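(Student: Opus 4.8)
The plan is to construct an explicit ``block decomposition'' $*$-isomorphism and then let $R\to\infty$. Fix $d\geq 0$, and let $T$ represent an element of $\IC_{\infty}[P_d(X),\A(B)]_O$ with $\prop(T)\leq R_0$. Since $O$ is $(G,r)$-separated, for each $x,y\in Z_d$ we have $\supp(T(x,y))\subseteq O_x=\bigsqcup_g O_{x,g}$, and each $O_{x,g}$ is clopen in $O_x$, so the restriction $\chi_{O_{x,g}}T(x,y)$ is a well-defined element of $\A(B)\wox\K$ supported in $O_{x,g}$ and $T(x,y)=\sum_g\chi_{O_{x,g}}T(x,y)$ has pairwise disjoint summands. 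For $g\in G$ set
$$T_g(x,y)=(t_{gx})_*\bigl(\chi_{O_{x,g}}T(x,y)\bigr)$$
when $x,y\in B(g,R)\cap Z_d$ and $0$ otherwise, where $(t_{gx})_*$ is the translation of Definition \ref{translation} attached to the affine isometry $t_{gx}=t_{xg}^{-1}$; this is defined once $x$ and $g$ lie in a common $U_{d,m}$ with $d(x,g)\leq l_{d,m}$, which is exactly why we must work with the algebras at infinity. Condition (3) of Proposition \ref{coarse disjoint union} gives $t_{gx}(O_{x,g})=O_{g,g}$, so $\supp(T_g(x,y))\subseteq O_{g,g}$, and the uniform-bound, local-finiteness and finite-propagation conditions of Definition \ref{A-algebra} for $(T_g)_{g\in G}$ are inherited from $T$. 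Moreover the fibred coarse embedding estimate $\rho_-(d(x,g))\leq\|t_x(g)(s(g))-t_x(x)(s(x))\|$, applied to the support radius $r$, forces $d(x,g)$ to be bounded by a constant depending only on $r$ and the embedding whenever $T_g(x,y)\neq0$; together with $\prop(T)\leq R_0$ this places $(T_g)_g$ in $\IA_{\infty}[G_R]$ for some $R=R(R_0)$. Thus $T\mapsto(T_g)_g$ defines a map $\Theta$ from $\IC_{\infty}[P_d(X),\A(B)]_O$ into $\bigcup_R\IA_{\infty}[G_R]\subseteq\lim_{R\to\infty}A^*_{\infty}(G_R)$.

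Next I would check that $\Theta$ is a $*$-homomorphism. The product in $A^*_{\infty}(G_R)$ is the ordinary, translation-free, matrix product in $C^*(Y_g)\ox\A(O_{g,g})$, whereas the product in $C^*_{\infty}(P_d(X),\A(B))_O$ is twisted by the maps $(t_{xz})_*$; multiplicativity of $\Theta$ therefore reduces to the cocycle identity $t_{gx}t_{xz}=t_{gz}$ together with $(t_{gz})_*(\chi_{O_{z,g}})=\chi_{O_{g,g}}$, which are precisely the identities already used to make the product on $\IC_{\infty}[P_d(X),\A(B)]$ well-defined. Compatibility with the involution is the same computation as for the $*$-structure of $\IC_{\infty}[P_d(X),\A(B)]$, using $t_{gx}t_{xy}=t_{gy}$. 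Finally $\Theta$ descends to the quotients: the equivalence relation $\sim$ and the passage to $\prod_g/\bigoplus_g$ both record only decay at infinity (of the variable in $X$, respectively in $G$), and $\Theta$ neither creates nor destroys entries in a fixed tail $U_{d,m}$, so it sends $[T]\sim[S]$ to tuples that agree off a finite subset of $G$.

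Then I would show that $\Theta$ is isometric and surjective onto $\lim_{R\to\infty}A^*_{\infty}(G_R)$. For surjectivity, given $(S_g)_g\in\IA_{\infty}[G_R]$ put $S(x,y)=\sum_{g\,:\,x,y\in B(g,R)}(t_{xg})_*(S_g(x,y))$; the sum is finite by bounded geometry, $\supp(S(x,y))\subseteq\bigsqcup_g t_{xg}(O_{g,g})=\bigsqcup_g O_{x,g}\subseteq O_x$, the defining conditions of $\IC_{\infty}[P_d(X),\A(B)]_O$ hold, and disjointness of the $O_{x,g}$ gives $\Theta(S)=(S_g)_g$ exactly, since applying $\chi_{O_{x,g}}$ to $S(x,y)$ kills all but the $g$-term. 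For the norm, the essential point is that the $g$-blocks of $T$ lie in mutually orthogonal corners of $\A(B)\wox\K$ (functions with pairwise disjoint supports in $B\times\IR_+$), so the action of $\chi_{U_{d,m}}T$ on the Hilbert module $E$ is a $G$-indexed direct sum of blocks; the isometries $t_{gx}$ identify the $g$-block, compressed to $U_{d,m}$, with $T_g$ acting on $B(g,R)\cap Z_d$, and since $B(g,R)$ meets $U_{d,m}$ only for $g$ far from the basepoint, letting $m\to\infty$ yields
$$\|[T]\|=\limsup_{m\to\infty}\|\chi_{U_{d,m}}T\|_{\B(E)}=\limsup_{g\to\infty}\|T_g\|=\|[(T_g)_g]\|_{A^*_{\infty}(G_R)}.$$
Extending by continuity and passing to the direct limit over $R$ proves (1). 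For (2) one runs the same construction pointwise in $t\in\IR_+$; because the controlling constants in Definitions \ref{twisted localization algebra} and \ref{A-algebra} are uniform in $t$, $\Theta$ restricts to an isomorphism $C^*_{L,\infty}(P_d(X),\A(B))_O\cong\lim_{R\to\infty}A^*_{L,\infty}(G_R)$, and by construction it intertwines the evaluation-at-zero homomorphisms, hence is compatible with $ev_A$.

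The step I expect to be the main obstacle is the isometry in (1): one must set up the Hilbert module $E$ precisely enough to see that $\chi_{U_{d,m}}T$ really is block-diagonal over $G$ after translating by the family $\{t_{gx}\}$ — in particular that these affine isometries, each defined only for $x$ in a tail $U_{d,m}$, assemble into a genuine unitary identification on the relevant subquotient of $E$ — and to reconcile the two limiting procedures, namely the $m\to\infty$ limit defining the norm on the twisted Roe algebra at infinity and the cofinite-exhaustion $\limsup$ norm on the quotient $\prod_g/\bigoplus_g$. Everything else is routine bookkeeping with the cocycle relations and with the disjointness of the $O_{x,g}$, both of which are already packaged into the hypotheses of Proposition \ref{coarse disjoint union}.
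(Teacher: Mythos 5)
Your proposal follows essentially the same route as the paper: decompose each $T(x,y)$ along the disjoint pieces $O_{x,g}$, use the lower control function $\rho_-$ to bound $d(x,g)$ by a constant depending on the support radius, translate by $(t_{gx})_*$ to land in $\A(O_{g,g})$, and identify the resulting tuple with an element of $\IA_{\infty}[G_R]$, extending to a $*$-isomorphism because the norms agree. The only difference is that the paper outsources the norm-agreement step (the point you flag as the main obstacle) to an external reference rather than carrying out the block-diagonal argument you sketch.
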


\begin{proof}
Take an arbitary element
$$[T]\in\IC_{\infty}[P_d(X),\A(B)]_O.$$
By definition, we have that
$$\supp(T(x,y))\subset O_x=\bigsqcup_{g\in B(x,l_{d,m})\cap G}O_{x,g},$$
where $x\in K_{d,m+1}\cap U_{d,m}$. Since the coherent open susbet is $(G,r)$-separate. Then we have a direct sum decomposition
$$T(x,y)=\bigoplus_{G\cap B(x,l_{d,m})}T_g(x,y),$$
where
$$T_g(x,y)=T(x,y)|_{O_{x,g}}$$
is the restriction of $T(x,y)$ on $O_{x,g}$ for all $x,y\in Z_d$ and $g\in G$. By property (4) in Definition \ref{twisted Roe algebra}, there exists $r_1>0$ such that
$$\supp(T(x,y))\subseteq B(t_x(x)(s(x)),r_1).$$
Conbining with the fact that
$$O_{x,g}\subseteq B(t_x(g)(s(g)),r),$$
we have that $T_g(x,y)=0$ for all $x$ with $d(t_x(x)(s(x)),t_x(g)(s(g)))>r_1+r$. It follows that there exists $R>0$ such that $T_g(x,y)=0$ for all $d(x,g)>R$.

We define
$$S_{g}(x,y)=(t_{gx})_*(T_g(x,y))$$
for all but finite $g\in G$ and $0$ otherwise. Notice that $T$ has finite propagation and $O$ is $(G,r)$-separated, then $S_g$ is well defined for all $g\in G\cap U_{d,m}$ with $m$ large enough. Then we have that $S_g$ defines an element in $C^*(B(g,R))\ox\A(O_{g,g})$.

Now, notice that the tuples $(S_g)_{g\in G}$ forms an element of $\IA_{\infty}[G_R]$. Then the correspondence $[T]\mapsto[(S_g)]$ extends to a $*$-isomorphism by the fact that the norms in these two $C^*$-algebras agrees. One can see the proof of \cite[Lemma 3.9]{JR2013} for essentially the same arguments which can be used to show the norms in these two algebras agree.
\end{proof}

Now let us recall the notion of strong Lipschitz homotopy introduced by G. Yu in \cite{Yu1997,Yu2000}. Let $(Y_g)_{g\in G}$ and $(W_g)_{g\in G}$ be be two families of uniformly bounded closed subspaces of $P_d(X)$ satisfying the condition in Definition \ref{A-algebra}. A map
$$f:\bigsqcup_{g\in G}Y_g\to\bigsqcup_{g\in G}W_g$$
is said to be Lipschitz if\begin{itemize}
\item[(1)]$f(Y_g)\subset W_g$ for all $g\in G$;
\item[(2)]there exists a constant $c>0$ such that
$$d(f(x),f(y))\leq c\cdot d(x,y)$$
for all $x,y\in Y_g$ and $g\in G$.
\end{itemize}

\begin{Def}[\cite{Yu1997}]
Let $f$ and $h$ be two Lipschitz maps from $\bigsqcup_{g\in G}Y_g$ to $\bigsqcup_{g\in G}W_g$. A continuous homotopy $F(x,t)(t\in [0,1])$ between $f$ and $h$ is said to be strongly Lipschitz if
\begin{itemize}
\item[(1)]$d(F(x,t),F(y,t))\leq Cd(x,y)$ for all $x,y\in Y_g$, $g\in G$ and $t\in[0,1]$, where $C$ is a constant (called the Lipschitz constant of $F$);
\item[(2)]$F$ is equi-continuous in $t$, i.e., for any $\varepsilon>0$, there exists $\delta>0$ such that $d(F(x,t_1),F(x,t_2))\leq\varepsilon$ for all $x\in \bigsqcup_{g\in G}Y_g$ if $|t_1-t_2|<\delta$;
\item[(3)]$F(x,0)=f(x),F(x,1)=h(x)$ for all $x\in \bigsqcup_{g\in G}Y_g$.
\end{itemize}

We say $(Y_g)_{g\in G}$ is strongly Lipschitz homotopy equivalent to $(W_g)_{g\in G}$ if there exists Lipschitz maps
$$f:\bigsqcup_{g\in G}Y_g\to\bigsqcup_{g\in G}W_g$$
and
$$h:\bigsqcup_{g\in G}W_g\to\bigsqcup_{g\in G}Y_g$$
such that $f\circ h$ and $h\circ f$ are strongly Lipschitz homotopy equivalent to the identity maps, respectively.
\end{Def}

The proof of the following result is similar to \cite[Lemma 6.4]{Yu2000}, one can check it with a Eilenberg swindle argument as in \cite{Yu2000}.

\begin{Lem}[\cite{Yu2000}]\label{Yu's lemma}
Let $A^*_{L,0,\infty}(Y)$ be the $C^*$-subalgebra of $A^*_{L,\infty}(Y)$ consisting of those functions $f$ such that $f(0)=0$. We have the following results:\begin{itemize}
\item[(1)]If $Y=(Y_g)_{g\in G}$ is strongly Lipschitz homotopy equivalent to $W=(W_g)_{g\in G}$, then
$$K_*(A^*_{L,0,\infty}(Y))\cong K_*(A^*_{L,0,\infty}(W)).$$
\item[(2)]If $G_R=(B(g,R))_{g\in G}$, then
$$K_*(A^*_{L,0,\infty}(G_R))= 0.$$
\end{itemize}
As a corollary, the evaluation map induces an isomorphism on the level of $K$-theory, i.e.
$$(ev_A)_*:K_*(A^*_{L,\infty}(G_R))\to K_*(A^*_{\infty}(G_R))$$
is an isomorphism.\qed\end{Lem}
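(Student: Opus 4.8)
The plan is to transcribe Yu's cutting-and-pasting argument \cite{Yu2000} to the present twisted setting ``at infinity''; both assertions are insensitive to the $\A(B)$-coefficients and to the passage to the $\bigoplus_{g\in G}(\cdot)$-quotient, so the structure of the proof is the same as that of \cite[Lemma~6.4]{Yu2000}.

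For (1), I would first check that a Lipschitz map $f:\bigsqcup_{g\in G}Y_g\to\bigsqcup_{g\in G}W_g$ with $f(Y_g)\subseteq W_g$ and uniform Lipschitz constant induces a $*$-homomorphism $f_\#:A^*_{L,0,\infty}(Y)\to A^*_{L,0,\infty}(W)$. This is the usual push-forward of matrix kernels along $f$ restricted to the countable dense set $Z_d$, implemented fibrewise over $g\in G$ by covering isometries, exactly as for Yu's localization algebras in \cite{Yu1997}: the uniform Lipschitz constant sends finite propagation to finite propagation (scaled by that constant) and preserves the decay condition $R(t)\to0$, while the $\A(B)$-support condition of Definition~\ref{A-algebra}(5) is untouched because $f$ only moves the $P_d(X)$-variable and the support $O_{g,g}$ depends only on $g$. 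One also notes that $f_\#$ preserves vanishing at $t=0$, since it acts separately in $t$. Then I would verify that a strongly Lipschitz homotopy $F$ from $f$ to $h$ produces a path $t\in[0,1]\mapsto F(\cdot,t)_\#$ of $*$-homomorphisms that is continuous in the point-norm topology; this is precisely where the equicontinuity-in-$t$ and the uniform Lipschitz-constant requirements of a strong Lipschitz homotopy are used. Homotopic $*$-homomorphisms agree on $K$-theory, so applying this to the two composites of a strong Lipschitz homotopy equivalence gives $K_*(A^*_{L,0,\infty}(Y))\cong K_*(A^*_{L,0,\infty}(W))$.

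For (2), I would run an Eilenberg swindle. Stability is already built in (the coefficient is $\A(B)\wox\K$), so one may form the infinite-repeat endomorphism $\Theta=\bigoplus_{n\in\IN}(\cdot)_n$ of $A^*_{L,0,\infty}(G_R)$, where the $n$-th copy reparametrises the localization variable by $t\mapsto(t-n)_+$ --- each summand still vanishes at $t=0$, and for any fixed $t$ only finitely many summands are nonzero, so $\Theta$ does land in the algebra. A shift-of-summands, together with the observation that reindexing the uniformly bounded pieces $B(g,R)$ over $G$ costs nothing modulo $\bigoplus_{g\in G}$, yields a strongly Lipschitz homotopy witnessing $\operatorname{id}\oplus\Theta\simeq\Theta$; by part (1) this forces $\operatorname{id}_*=0$ on $K_*(A^*_{L,0,\infty}(G_R))$, so the group vanishes. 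I expect the main difficulty to be the bookkeeping in this step: one must simultaneously keep the propagation finite, the $\A(B)$-supports inside the sets $O_{g,g}$, and the $t$-decay under the swindle. Each of these is, however, preserved by reparametrisation in $t$ and reindexing in $G$, so the argument of \cite[Lemma~6.4]{Yu2000} carries over.

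Finally, the corollary is formal: $A^*_{L,0,\infty}(G_R)$ is the kernel of the surjection $ev_A:A^*_{L,\infty}(G_R)\to A^*_\infty(G_R)$, so the six-term exact sequence in $K$-theory combined with $K_*(A^*_{L,0,\infty}(G_R))=0$ from (2) shows that $(ev_A)_*$ is an isomorphism.
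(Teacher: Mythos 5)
Your part (2) has a genuine gap, and it sits exactly at the mathematical heart of the lemma. The infinite-repeat endomorphism $\Theta(f)(t)=\bigoplus_{n\in\IN}f\bigl((t-n)_+\bigr)$ does not land in $A^*_{L,0,\infty}(G_R)$: membership requires (condition (1) of Definition \ref{twisted localization algebra}, imposed with uniform constants in the definition of $\IA_{L,\infty}[Y]$) a single bound $R(t)\to 0$ dominating the propagation of the kernel at time $t$, whereas the summand of $\Theta(f)(t)$ with $n=\lfloor t\rfloor$ has propagation $\prop\bigl(f(t-\lfloor t\rfloor)\bigr)$ with $t-\lfloor t\rfloor\in[0,1)$, so $\limsup_{t\to\infty}\prop\bigl(\Theta(f)(t)\bigr)\geq\sup_{s\in[0,1)}\prop(f(s))$, which is not $0$ for a general $f$ in the algebra. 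Thus your assertion that the $t$-decay is ``preserved by reparametrisation in $t$'' fails, and the swindle as written does not run. A sanity check that something must be wrong: the identical $t$-shift swindle, applied to the kernel of evaluation on an ordinary localization algebra, would ``prove'' $K_*(C^*_{L,0}(X))=0$ for every bounded geometry space, i.e.\ that the coarse assembly map is always an isomorphism; the propagation-decay condition is precisely what blocks this, and your argument never uses any property of the family $G_R$.

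The missing ingredient is the uniform boundedness (in fact uniform contractibility) of the balls, which is where the geometry enters in \cite{Yu2000}. The intended argument is: first use part (1) to replace $(B(g,R))_{g\in G}$ by the family of centres $(\{g\})_{g\in G}$ --- once $d$ is large compared with $R$ each ball $B(g,R)\subseteq P_d(X)$ is star-shaped about $g$ and the radial contractions form strongly Lipschitz homotopies with uniform constants (this is also why the limits over $d$ and $R$ are interchanged in the proof of Proposition \ref{coarse disjoint union} before the lemma is invoked); for a family of singletons every kernel has propagation $0$, the decay condition is vacuous, and only then is your shift-and-reparametrise swindle legitimate. (Equivalently one interleaves the $t$-shifts with the spatial contractions, as Yu does.) Two smaller points: in part (1), a strongly Lipschitz homotopy does not yield a point-norm continuous path $s\mapsto F(\cdot,s)_{\#}$ of $*$-homomorphisms, since covering isometries cannot be chosen norm-continuously in $s$; the homotopy invariance of localization-algebra $K$-theory is itself proved by a swindle-type argument in \cite{Yu1997}, so you should cite or reproduce that mechanism rather than the naive one. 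And in the corollary, the surjectivity of $ev_A$ (so that $A^*_{L,0,\infty}(G_R)$ is the full kernel in a short exact sequence) again uses the uniform boundedness of the balls to produce, for a given $[T]$, a path with decaying propagation evaluating to it; it is not automatic from the definitions.
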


\begin{proof}[Proof of Proposition \ref{coarse disjoint union}]
By Proposition \ref{Roe to A}, we have the following commuting diagram:
$$\xymatrix{\lim\limits_{d\to\infty}K_*(C^*_{L,\infty}(P_d(X),\A(B))_O)\ar[r]^{(ev^{\A}_{\infty})_*}\ar[d]_{\cong}&\lim\limits_{d\to\infty}K_*(C^*_{\infty}(P_d(X),\A(B))_O)\ar[d]^{\cong}\\
\lim\limits_{d\to\infty}\lim\limits_{R\to\infty}K_*(A^*_{L,\infty}(G_R))\ar[d]_{\cong}\ar[r]^{(ev_A)_*}&\lim\limits_{d\to\infty}\lim\limits_{R\to\infty}K_*(A^*_{\infty}(G_R))\ar[d]^{\cong}\\
\lim\limits_{R\to\infty}\lim\limits_{d\to\infty}K_*(A^*_{L,\infty}(G_R))\ar[r]^{(ev_A)_*}&\lim\limits_{R\to\infty}\lim\limits_{d\to\infty}K_*(A^*_{\infty}(G_R)).}$$
in which all vertical maps are isomorphisms. By Lemma \ref{Yu's lemma}, we have that the bottom horizontal map $(ev_A)_*$ is an isomorphism. Thus, $(ev_{\infty}^{\A})_*$ is also an isomorphism.
\end{proof}

Finally, we are able to prove Theorem \ref{twisted Baum-Connes}.

\begin{proof}[Proof of Theorem \ref{twisted Baum-Connes}]
For any $r>0$, we define $O_x^{(r)}$ by
$$O_x^{(r)}=\bigcup_{y\in B(x,l_{d,m})}B(t_x(y)(s(y)),r)$$
for any $x\in K_{d,m+1}\cap U_{d,m}$. Then it is easy to check that $O^{(r)}=(O_x^{(r)})_{x\in X}$ is a coherent collection of open subsets.

For any $d\geq 0$, by the definition of the twisted algebras, we have that
$$C^*_{\infty}(P_d(X),\A(B))=\lim_{r\to\infty}C^*_{\infty}(P_d(X),\A(B))_{O^{(r)}},$$
$$C^*_{L,\infty}(P_d(X),\A(B))=\lim_{r\to\infty}C^*_{L,\infty}(P_d(X),\A(B))_{O^{(r)}}.$$
Notice that the limits of $C^*_{\infty}(P_d(X),\A(B))_{O^{(r)}}$ with limits order $\lim_{d\to\infty}\lim_{r\to\infty}$ and $\lim_{d\to\infty}\lim_{r\to\infty}$ are actually the same. Consequently, it suffices to prove that the $K$-theortic homomorphism
$$ev_*:\lim_{d\to\infty}K_*(C^*_{L,\infty}(P_d(X),\A(B))_{O^{(r)}})\to\lim_{d\to\infty}K_*(C^*_{\infty}(P_d(X),\A(B))_{O^{(r)}})$$
induced by the evaluation-at-zero map is an isomorphism for each $r>0$.

For any given $r>0$, as $X$ has bounded geometry, there exists $N_r>0$ such that
$$X=\bigsqcup_{k=1}^{N_r}X_k$$
satisfying that for any $z\in Z_d\cap U_{d,m}$ and $x,y\in X_k\cap B(z,l_{d,m})$, we have
$$\|t_z(x)(s(x))-t_z(y)(s(y))\|>2r.$$
We shall define
$$O^{(r)}_{x,k}=\bigcup_{y\in X_k\cap B(x,l_{d,m})}B(t_x(y)(s(y)),r)$$
for all $x\in X$ and $k\in\{1,\cdots,N_r\}$. Then $O^{(r)}_k=(O^{(r)}_{x,k})_{x\in X}$ is a coherent collection which satisfies the condition in Proposition \ref{coarse disjoint union}. Thus we have that
$$ev_*:\lim_{d\to\infty}K_*(C^*_{L,\infty}(P_d(X),\A(B))_{O^{(r)}_k})\to\lim_{d\to\infty}K_*(C^*_{\infty}(P_d(X),\A(B))_{O^{(r)}_k})$$
for all $k\in\{1,\cdots,N_r\}$. Then it follows from a Mayer-Vietories argument by using Lemma \ref{MV-argument} to complete the proof.
\end{proof}

\section{The geometric Bott map and proof of the main theorem}

In this section, we shall define the Bott map $\beta$ from the $K$-theory group of the Roe algebra to $K$-theory group of the twisted Roe algebra and its localization algebraic version $\beta_L$. The construction relies heavily on the Bott periodicity theorem we introduced in Section 5.

For each $t\geq 1$ and $g\in\S$, denote $g_t\in\S$ to be the function
$$g_t(r)=g(t^{-1}r).$$
For any $x\in X$, we define the Bott map $\beta(x):\S\to\A(B)$ to be
$$(\beta(x))(g)=\beta_{t_x(x)(s(x))}(g),$$
where $t_x$ is the trivilazition on $B(x,l_{d,m})$ defined as in Definition \ref{FCE}, $t_x(x)(s(x))\in B$ and $\beta_{t_x(x)(s(x))}$ is the Bott map defined as in Section 5.

\begin{Def}
For each $d\geq 0$ and $t\in[1,\infty)$, define a map
$$\beta_t:\S\wox\IC_{\infty}[P_d(X)]\to\IC_{\infty}[P_d(X),\A(B)]$$
for each $g\in\S$, $[T]\in\IC_{\infty}[P_d(X)]$ by the formula
$$\beta_t(g\wox[T])=[\beta_t(g\wox T)],$$
where
$$(\beta_t(g\wox T))(x,y)=(\beta(x))(g_t)\wox T(x,y).$$
\end{Def}

\begin{Def}
For each $d\geq 0$ and $t\in[1,\infty)$, define a map
$$(\beta_L)_t:\S\wox\IC_{L,\infty}[P_d(X)]\to\IC_{L,\infty}[P_d(X),\A(B)]$$
for each $g\in\S$, $f\in\IC_{L,\infty}[P_d(X)]$ by the formula
$$((\beta_L)_t(g\wox f))(r)=\beta_t(g\wox f(r)),$$
for each $r\in\IR_+$.
\end{Def}

The following lemma is proved similarly with \cite[Proposition 7.6]{Yu2000} and \cite[Lemma 7.3]{CWY2013}

\begin{Lem}\label{Bott}
For each $d\geq0$, the maps $\beta_t$ and $(\beta_L)_t$ extend respectively to asymptotic morphisms
$$\beta:\S\wox\C^*_{\infty}(P_d(X))\leadsto C^*_{\infty}(P_d(X),A(B))$$
$$\beta_L:\S\wox\C^*_{L,\infty}(P_d(X))\leadsto C^*_{L,\infty}(P_d(X),A(B)).$$
\end{Lem}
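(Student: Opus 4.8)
The plan is to follow \cite[Proposition 7.6]{Yu2000} and \cite[Lemma 7.3]{CWY2013}, isolating the single place where passing from a Hilbert space to an $\ell^p$-space makes a difference. First I would check that $\beta_t$ sends the dense $*$-subalgebra $\S\odot\IC_\infty[P_d(X)]$ into $\IC_\infty[P_d(X),\A(B)]$: for $g\in\S$ with $\supp g\subseteq[-R,R]$ the element $\beta_{t_x(x)(s(x))}(g_t)\in\A(B)$, viewed as a function on $B\times\IR_+$, is supported in $\{(z,s):\|z-t_x(x)(s(x))\|_p^p+s^2\le(tR)^2\}$, a bounded neighbourhood of $t_x(x)(s(x))$ whose radius is uniform in $x$; hence conditions (1)--(4) of Definition \ref{twisted Roe algebra} for $\beta_t(g\wox T)$ follow from those for $[T]$, and condition (2) of Definition \ref{twisted localization algebra} holds for $(\beta_L)_t$. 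Additivity, $\IC$-linearity and compatibility with the gradings are exact, since each $\beta(x)\colon\S\to\A(B)$ is a graded $*$-homomorphism and the formulas defining $\beta_t,(\beta_L)_t$ are linear over $\S\odot\IC_\infty[P_d(X)]$; boundedness and norm-continuity of $t\mapsto\beta_t(a)$ follow from $\|\beta(x)(g_t)\|\le\|g\|$ together with the norm-continuity of $t\mapsto g_t$ in $\S$.

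The substance is asymptotic multiplicativity and $*$-compatibility. Here one exploits the cocycle relation $t_x(z)=t_{xz}\circ t_z(z)$ from Definition \ref{FCE}(2): the twists $(t_{xz})_*$ occurring in the product and adjoint of $\IC_\infty[P_d(X),\A(B)]$ move a Bott generator with base point $t_z(z)(s(z))$ to one with base point $t_x(z)(s(z))$. Expanding $\beta_t\bigl((g\wox T)(g'\wox S)\bigr)$ and $\beta_t(g\wox T)\,\beta_t(g'\wox S)$ entrywise and using that $\beta(x)$ is a homomorphism, the discrepancy at $(x,y)$ is a sum over the boundedly many $z$ with $d(x,z)\le\prop(T)$ of terms whose $\A(B)\wox\K$-norm is at most $\|\beta_{t_x(x)(s(x))}(g_t)-\beta_{t_x(z)(s(z))}(g_t)\|_{\A(B)}$ times the entrywise norms of $T$ and $S$; by Definition \ref{FCE}(1) the two base points satisfy $\|t_x(x)(s(x))-t_x(z)(s(z))\|_p\le\rho_+(\prop(T))$, and by Definition \ref{twisted Roe algebra}(2) the resulting operator on $E$ has boundedly many nonzero entries per row and column, so its $C^*_\infty$-norm is controlled as above. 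The adjoint is estimated in the same way. So the whole lemma reduces to showing that, for each fixed $D>0$ and $g\in\S$,
\begin{equation*}
\lim_{t\to\infty}\ \sup\bigl\{\,\|\beta_{v_0}(g_t)-\beta_{v_1}(g_t)\|_{\A(B)}\ :\ v_0,v_1\in B,\ \|v_0-v_1\|_p\le D\,\bigr\}=0.
\end{equation*}

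To prove this I would write $g=g_{ev}+g_{odd}$ and approximate $g_{ev}$ and $g_{odd}$ respectively, within $\varepsilon/3$ each, by $a\in C^1_c(\IR)_{ev}$ and by $r\mapsto r\,h(r)$ with $h\in C^1_c(\IR)_{ev}$; for these building blocks $\beta_v(g_t)(z,s)$ equals $a(t^{-1}\|C_v(z,s)\|)$, respectively $t^{-1}h(t^{-1}\|C_v(z,s)\|)\,C_v(z,s)$, where $\|C_v(z,s)\|=(\|z-v\|_p^p+s^2)^{1/2}$. The difference of either expression for $v=v_0$ and $v=v_1$ vanishes off the union of the two bounded supports, on which $\|z-v_0\|_p,\|z-v_1\|_p\le M_t:=(tR)^{2/p}+D$ and $\|C_{v_i}(z,s)\|=O(t)$. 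The crucial observation is that the $p/2$-homogeneity $\phi(\lambda x)=\lambda^{p/2}\phi(x)$ of the extended Mazur map yields
\begin{equation*}
\|\phi(z-v_0)-\phi(z-v_1)\|=M_t^{p/2}\,\Bigl\|\phi\bigl(M_t^{-1}(z-v_0)\bigr)-\phi\bigl(M_t^{-1}(z-v_1)\bigr)\Bigr\|\le M_t^{p/2}\,\omega_{D/M_t}\bigl(\phi|_{Ball_B(0,1)}\bigr),
\end{equation*}
and since $M_t^{p/2}=O(t)$ while $\omega_{D/M_t}\bigl(\phi|_{Ball_B(0,1)}\bigr)\to0$ by Proposition \ref{extend Mazur uniform homeo}, we get $\|\phi(z-v_0)-\phi(z-v_1)\|=o(t)$, hence also $\bigl|\,\|C_{v_0}(z,s)\|-\|C_{v_1}(z,s)\|\,\bigr|=o(t)$, uniformly on the relevant region. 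Feeding this into the two displayed formulas via the Lipschitz bounds $|a(t^{-1}u)-a(t^{-1}w)|\le t^{-1}\|a'\|_\infty|u-w|$ and the analogue for $h$, every term picks up a factor $t^{-1}$ against an $o(t)$ or $O(t)$ quantity and so tends to $0$; the usual $\varepsilon/3$-argument then gives the claim for arbitrary $g$, uniformly over pairs at distance $\le D$.

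The main obstacle is exactly this estimate. In the Hilbert-space arguments of \cite{Yu2000,CWY2013} one has $\|C_{v_0}(z,s)-C_{v_1}(z,s)\|=\|v_0-v_1\|_p$ identically, so the difference of Bott generators is $O(1/t)$; for $p\ne2$ the extended Mazur map is merely H\"older, so $\|\phi(z-v_0)-\phi(z-v_1)\|$ a priori grows with $t$ over the expanding support of $\beta_v(g_t)$, and one must play the growth of the support off against the shrinkage of the relative displacement $D/M_t\to0$ using the homogeneity of $\phi$; obtaining the bound $o(t)$ rather than $O(t)$ is what lets the rescaling $g\mapsto g_t$ win. Granting it, the remaining points---that $\beta_t$ is contractive for the $C^*$-norm of $\S\wox C^*_\infty(P_d(X))$ once realised on the Hilbert $\A(B)\wox\K$-module $E$, hence extends to an asymptotic morphism on the completions (after the customary reparametrisation of the $t$-variable), and that the argument for $(\beta_L)_t$ is word-for-word the same with the localisation parameter inert throughout---are carried out exactly as in \cite[Proposition 7.6]{Yu2000} and \cite[Lemma 7.3]{CWY2013}.
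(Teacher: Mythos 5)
Your proposal is correct, and its skeleton is the same as the paper's: reduce asymptotic multiplicativity, via the cocycle relation $t_x=t_{xz}\circ t_z$ and an entrywise estimate (the paper quotes \cite[Lemma 3.4]{GWY2008} where you invoke the bounded row/column count), to the single statement that $\sup\{\|\beta_{v_0}(g_t)-\beta_{v_1}(g_t)\|:\|v_0-v_1\|_p\le D\}\to 0$ as $t\to\infty$, with $D$ controlled by $\rho_+$ of the propagation as in Definition \ref{FCE}. Where you genuinely differ is in how this key estimate is proved. The paper uses the $p/2$-homogeneity of the extended Mazur map to write $t^{-1}\phi(v-v_x)=\phi(t^{-2/p}(v-v_x))$ and then substitutes $(v,r)\mapsto(t^{-2/p}v,t^{-1}r)$, so the whole supremum collapses \emph{exactly} to $\|\beta_{t^{-2/p}v_x}(g)-\beta_{t^{-2/p}v_y}(g)\|$, which tends to $0$ by Lemma \ref{cont. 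Bott} since $t^{-2/p}\|v_x-v_y\|_p\to 0$; no smooth approximation of $g$ is needed at that stage. You instead stay on the expanding support of $\beta_v(g_t)$, approximate $g$ by $C^1_c$ even/odd building blocks, and use the same homogeneity to balance the growth $M_t^{p/2}=O(t)$ against the decay $\omega_{D/M_t}\bigl(\phi|_{Ball_B(0,1)}\bigr)\to 0$ from Proposition \ref{extend Mazur uniform homeo}, obtaining the $o(t)$ bound that the $t^{-1}$ from the rescaled Lipschitz constants then kills. Both routes rest on exactly the same two ingredients (homogeneity of $\phi$ and its uniform continuity on balls), so yours amounts to a quantitative re-derivation of a rescaled form of Lemma \ref{cont. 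Bott}: the paper's substitution is slicker and simply reuses that lemma, while your version makes the rate explicit and isolates cleanly why the rescaling $g\mapsto g_t$ is what saves the merely H\"older Mazur map when $p\ne 2$. One small caution: your summary phrase ``a factor $t^{-1}$ against an $o(t)$ or $O(t)$ quantity'' is literally too weak ($t^{-1}\cdot O(t)$ alone does not vanish); in your odd-part estimate the $O(t)$ factor $\|C_{v_0}(z,s)\|$ in fact comes paired with $t^{-2}\cdot o(t)$, so the conclusion stands, but the bookkeeping should be stated that way. The remaining points (the bound $\|\beta_t(g\wox T)\|\le\|g\|\,\|T\|$, passage to completions using nuclearity of $\S$, and the verbatim argument for $\beta_L$) are treated in your proposal at the same level as in the paper, by reference to \cite{Yu2000,CWY2013}.
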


\begin{proof}
We will only prove it for $\beta$ and the case for $\beta_L$ follows similarly.

First of all, we claim that
$$\left\|[\beta_t(gh\wox ST)]-[\beta_t(g\wox S)][\beta_t(h\wox T)]\right\|\to0$$
for any $[S],[T]\in\IC_{\infty}[P_d(X)]$. By definition, we can calculate that the $(x,y)$-th entry of the above matrix coefficients of $\beta_t(gh\wox ST)$ and $\beta_t(g\wox S)\beta_t(h\wox T)$ are
$$\sum_{z\in Z_{d}}\beta(x)(g_th_t)\wox S(x,z)T(z,y)$$
and
$$\sum_{z\in Z_{d}}\beta(x)(g_t)(t_{xy})_*(\beta(y)(h_t))\wox S(x,z)T(z,y),$$
respectively. By using \cite[Lemma 3.4]{GWY2008}, it suffices to prove for any $\varepsilon>0$ and $h\in\S$, there exists $T>0$ such that
$$\|\beta(x)(h_t)-(t_{xy})_*(\beta(y)(h_t))\|<\varepsilon$$
for all $t\geq T$. To simplify the notation, we denote $v_{x}=t_x(x)(s(x))$ and $v_y=t_x(y)(s(y))$. Write $R$ the propagation of $[ST]$. Assmue that $d(x,y)\leq R$ and $d(x,x_0)$ is large enough such that $t_y(y)(s(y))=t_{xy}(v_y)$, where $x_0$ is the base point we choose. Combining Definition \ref{translation}, we have that
\begin{equation*}\begin{split}
\|\beta(x)(h_t)-(t_{xy})_*\beta(y)(h_t)\|&=\sup_{(v,r)\in B\times\IR_+}\|h_t(C_{v_x}(v,r))-h_t(C_{v_y}(v,r))\|\\
&=\sup_{(v,r)\in B\times\IR_+}\left\|h\left(t^{-1}\phi(v-v_x),t^{-1}r\right)-h\left(t^{-1}\phi(v-v_y),t^{-1}r\right)\right\|\\
&=\sup_{(v,r)\in B\times\IR_+}\left\|h\left(\phi(t^{-2/p}(v-v_x)),t^{-1}r\right)-h\left(\phi(t^{-2/p}(v-v_y)),t^{-1}r\right)\right\|\\
&=\|\beta_{t^{-2/p}v_x}(h)-\beta_{t^{-2/p}v_y}(h)\|
\end{split}\end{equation*}
Notice that $\|v_x-v_y\|\leq \rho_+(d(x,y))\leq \rho_+(R)$, where $\rho_+$ is the controlled function defined as in Definition \ref{FCE}. By using the fact $\lim\limits_{t\to\infty}t^{-2/p}=0$ and Lemma \ref{cont. Bott}, we complete the proof of the claim.

The rest part of the proof is similar with \cite[Lemma 7.6]{Yu2000}. One can similarly prove that
$$\beta_t(g\wox[T])\leq \|g\|\cdot\|[T]\|.$$
for all $g\in\S$ and $[T]\in\IC_{\infty}[P_d(X)]$. Hence $\beta_t$ extends to an well-defined asymptotic morphism from $\S\wox_{max}C^*_{\infty}(P_d(X))$ to $C^*_{\infty}(P_d(X),\A(B))$. Since $\S$ is nuclear, we complete the proof.
\end{proof}

Note that the asymptotic morphisms
$$\beta_t:\S\wox C^*_{\infty}(P_d(X))\leadsto C^*_{\infty}(P_d(X),\A(B))$$
$$(\beta_L)_t:\S\wox C^*_{L,\infty}(P_d(X))\leadsto C^*_{L,\infty}(P_d(X),\A(B))$$
induce homomorphisms on $K$-theory
$$\beta_*:K_*(\S\wox C^*_{\infty}(P_d(X)))\leadsto K_*(C^*_{\infty}(P_d(X),\A(B)))$$
$$(\beta_L)_*:K_*(\S\wox C^*_{L,\infty}(P_d(X)))\leadsto K_*(C^*_{L,\infty}(P_d(X),\A(B))).$$

\begin{Thm}\label{BottL}
For any $d> 0$, the Bott map
$$(\beta_L)_*:K_*(\S\wox C^*_{L,\infty}(P_d(X)))\leadsto K_*(C^*_{L,\infty}(P_d(X),\A(B))).$$
is an isomorphism.
\end{Thm}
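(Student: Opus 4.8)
The plan is to prove Theorem \ref{BottL} by a localization/Rips-complex reduction identical in spirit to the proof of Theorem \ref{twisted Baum-Connes}, replacing the evaluation map by the Bott map. The key observation is that the Bott map $(\beta_L)_t$ is natural with respect to the ideals $C^*_{L,\infty}(P_d(X),\A(B))_O$ associated to coherent families of open subsets of $B$. Concretely, for a coherent collection $O = (O_x)_{x\in X}$, the map $(\beta_L)_t$ carries $\S\wox C^*_{L,\infty}(P_d(X))$ into the ideal indexed by the coherent family obtained by fattening $O$ along the Clifford support direction; in particular, if I start from the trivial coherent family (all of $B$) on the source side, the image lands in the union over $r>0$ of the $(G,r)$-separated ideals for suitable decompositions of $X$. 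So the strategy is: (1) reduce to the $(G,r)$-separated case; (2) in the $(G,r)$-separated case, use Proposition \ref{Roe to A} to identify the relevant algebras with $\lim_{R\to\infty} A^*_{L,\infty}(G_R)$; (3) prove the Bott map is an isomorphism there by reducing to the genuine finite-dimensional Bott periodicity Theorem \ref{K of A(B)} fiberwise over $g\in G$.

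\textbf{Step 1: Reduction to coherent ideals.} First I would note that, as in the proof of Theorem \ref{twisted Baum-Connes}, for each $r>0$ the collection $O^{(r)}=(O_x^{(r)})_{x\in X}$ with $O_x^{(r)}=\bigcup_{y\in B(x,l_{d,m})}B(t_x(y)(s(y)),r)$ is coherent, and $C^*_{L,\infty}(P_d(X),\A(B)) = \lim_{r\to\infty} C^*_{L,\infty}(P_d(X),\A(B))_{O^{(r)}}$. The asymptotic morphism $(\beta_L)_t$ respects these filtrations up to the usual parameter rescaling: for $t$ large, $\beta(x)(g_t)$ has Clifford-support radius shrinking like $t^{-2/p}\|\cdot\|$ near $t_x(x)(s(x))$, so $(\beta_L)_t(\S\wox C^*_{L,\infty}(P_d(X)))$ lands in any $O^{(r)}$-ideal for $r$ bounded below. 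Hence by exhausting and commuting the direct limits over $d$ and $r$, it suffices to show that for each $r>0$,
$$(\beta_L)_*:K_*\big(\S\wox C^*_{L,\infty}(P_d(X))\big)\to \lim_{d\to\infty}K_*\big(C^*_{L,\infty}(P_d(X),\A(B))_{O^{(r)}}\big)$$
is an isomorphism. Then, exactly as before, decompose $X=\bigsqcup_{k=1}^{N_r}X_k$ so that each $O^{(r)}_k$ is $(G,r)$-separated with $G=X_k$, and apply a Mayer--Vietoris argument via Lemma \ref{MV-argument} to reduce to the $(G,r)$-separated case.

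\textbf{Step 2: The $(G,r)$-separated case.} For a $(G,r)$-separated coherent family, Proposition \ref{Roe to A} gives $C^*_{L,\infty}(P_d(X),\A(B))_O \cong \lim_{R\to\infty}A^*_{L,\infty}(G_R)$, and similarly the source $\S\wox C^*_{L,\infty}(P_d(X))$ matches $\lim_{R\to\infty}$ of $\S$ tensored with the corresponding localized Roe algebras of the uniformly bounded pieces $B(g,R)$. Under these identifications, the Bott map becomes, fiberwise over $g\in G$, the tensor of an identity on $C^*_L(B(g,R))$ with the Bott homomorphism $\beta_{t_g(g)(s(g))}:\S\to\A(O_{g,g})\hookrightarrow\A(B)$. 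By Theorem \ref{K of A(B)} (independence of base point), $(\beta_{x_0})_*:K_*(\S)\to K_*(\A(B))$ is an isomorphism; the same argument localized to the open subset $O_{g,g}$ — which is uniformly homeomorphic to a ball, hence to $\A(B)$ up to the relevant $K$-theory — shows the fiberwise Bott map is a $K$-theory isomorphism with constants uniform in $g$. Taking the product-over-$\bigoplus$ quotient and passing to the limit over $R$ and $d$, $(\beta_L)_*$ is an isomorphism on the $(G,r)$-separated ideals, and assembling via Mayer--Vietoris completes the proof.

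\textbf{The main obstacle} I expect is Step 2's claim that the fiberwise Bott map on the truncated pieces $O_{g,g}$ induces a $K$-theory isomorphism \emph{uniformly in $g\in G$} — one needs that the homeomorphisms $O_{g,g}\cong$ (a standard ball in $B$) induced by the trivializations $t_g$ are controlled independently of $g$, so that the asymptotic-morphism estimates in Lemma \ref{Bott} and the Bott periodicity isomorphism of Theorem \ref{K of A(B)} can be applied with $g$-independent constants; this is where the coherence condition \eqref{eq*} and the uniform parameter $r$ in the $(G,r)$-separation are essential, and making the compatibility of $\beta_L$ with the $A^*_{L,\infty}(G_R)$-picture fully precise (matching supports, propagations, and the $\bigoplus/\prod$ quotient) is the technical heart. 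A secondary point requiring care is checking that $(\beta_L)_t$ genuinely preserves the defining support condition (4) of Definition \ref{twisted localization algebra} with the correct $r$, which again hinges on the $t^{-2/p}$-rescaling computation already appearing in the proof of Lemma \ref{Bott}.
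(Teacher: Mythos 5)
There is a genuine gap: your Step 1 reduction does not work, because the Bott asymptotic morphism does not respect the filtration by the coherent ideals. Under the rescaling $g\mapsto g_t$, $g_t(r)=g(t^{-1}r)$, the support of $\beta(x)(g_t)$ in $B\times\IR_+$ \emph{grows} like $t^{2/p}$ around $t_x(x)(s(x))$ (if $g$ is supported in $[-R,R]$ then $\beta_{x_0}(g_t)$ is supported where $\|v-x_0\|_p\leq (tR)^{2/p}$); the $t^{-2/p}$ you quote from the proof of Lemma \ref{Bott} controls the \emph{difference} of Bott elements at nearby base points, not the size of their supports. Hence there is no fixed $r$ for which $(\beta_L)_t$ lands in the $O^{(r)}$-ideal for all large $t$, and the claimed factorization through $C^*_{L,\infty}(P_d(X),\A(B))_{O^{(r)}}$ fails. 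Step 2 has a second, independent problem: the untwisted source $\S\wox C^*_{L,\infty}(P_d(X))$ admits no analogue of the $(G,r)$-separated decomposition of Proposition \ref{Roe to A} — that decomposition is driven entirely by the support condition in $B$ (condition (4) of Definition \ref{twisted Roe algebra}), which has no counterpart without the twist — so there is nothing on the left-hand side to match $\lim_{R\to\infty}A^*_{L,\infty}(G_R)$ fiberwise over $g\in G$. In short, the cutting-and-pasting along the Banach space $B$ is the right tool for Theorem \ref{twisted Baum-Connes} (the evaluation map), but it cannot be transported to the Bott map.

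The paper's proof cuts along the simplicial structure of $P_d(X)$ instead: using strong Lipschitz homotopy invariance of the $K$-theory of localization algebras and a Mayer--Vietoris induction on the dimension of the skeleta, one reduces to the $0$-skeleton $\Delta^{(0)}\subseteq P_d(X)$. There both sides split as $\prod/\bigoplus$ over the vertices $\gamma$, with fibers $K_*(\S\wox\K)$ and $K_*(\A(B)\wox\K)$ respectively, and the Bott map is fiberwise $\beta_{t_{\gamma}(\gamma)(s(\gamma))}$; Theorem \ref{K of A(B)} then gives the isomorphism. If you want to salvage your outline, this skeleton-by-skeleton reduction is the step to substitute for your Steps 1--2; your use of Theorem \ref{K of A(B)} at the end is the correct final ingredient, but it must be applied vertex-by-vertex on the $0$-skeleton, not fiberwise over a $(G,r)$-separated family.
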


\begin{proof}
The $K$-theory of the localization algebra is invariant under the strong Lipschtiz homotopy equivalence (see \cite{Yu1997,Yu2000}). By a Mayer-Vietoris sequence argument and induction on the dimension of the skeletons \cite{HRY1993,Yu1997}, the general case can be reduced to the zero-dimensional case, i.e., if $\Delta^{(0)}\subseteq P_d(X)$ is the $0$-skeleton of $P_d(X)$, then
$$(\beta_L)_*:K_*\left(\S\wox C^*_{L,\infty}\left(\Delta^{(0)}\right)\right)\to K_*\left(C^*_{L,\infty}\left(\Delta^{(0)},\A(B)\right)\right)$$
is an isomorphism.

Notice that
$$K_*\left(\S\wox C^*_{L,\infty}\left(\Delta^{(0)}\right)\right)=\frac{\prod_{\gamma\in\Delta^{(0)}}K_*(\S\wox C^*_L(\{\gamma\}))}{\bigoplus_{\gamma\in\Delta^{(0)}}K_*(\S\wox C^*_L(\{\gamma\}))}$$
$$K_*\left(C^*_{L,\infty}\left(\Delta^{(0)},\A(B)\right)\right)=\frac{\prod_{\gamma\in\Delta^{(0)}}K_*(C^*_L(\{\gamma\},\A(B)))}{\bigoplus_{\gamma\in\Delta^{(0)}}K_*(C^*_L(\{\gamma\},\A(B)))}.$$
Moreover, one can see that $K_*(\S\wox C^*_L(\{\gamma\}))\cong K_*(\S\wox\K)$ and $K_*(C^*_L(\{\gamma\},\A(B)))\cong K_*(\K\wox\A(B))$ and the Bott map $(\beta_L)_*$ coincides with $\beta_{t_{\gamma}(\gamma)(s(\gamma))}$ when $(\beta_L)_*$ restricts on $K_*(\{\gamma\})\cong K_*(\S)$ to $K_*(C^*_L(\{\gamma\},\A(B)))\cong K_*(\A(B))$. Then the theorem follows from Theorem \ref{K of A(B)} directly.
\end{proof}

Then we can finally prove the main result:

\begin{proof}[Proof of Theorem \ref{main theorem}]
We have the following commuting diagram:
$$\xymatrix{K_*(C^*_{L,\infty}(P_d(X)))\ar[rr]^{(\beta_{L})_*\qquad}\ar[d]_{(ev_{\infty})_*}&&K_*(C^*_{L,\infty}(P_d(X),\A(B)))\ar[d]^{(ev^{\A}_{\infty})_*}\\K_*(C^*_{\infty}(P_d(X)))\ar[rr]^{\beta_*\qquad}&&K_*(C^*_{\infty}(P_d(X),\A(B))).}$$
By Theorem \ref{twisted Baum-Connes} and Theorem \ref{BottL}, we show that $ev_{\infty}^{\A}$ is an isomorphism and $(\beta_L)_*$ is injective. It forces $ev_{\infty}$ to be an injection. Combining Theorem \ref{CNC at inf}, we conclude that the coarse assembly map
$$\mu:\lim_{d\to\infty}K_*(P_d(X))\to K_*(C^*(X))$$
is injective.
\end{proof}

\bibliographystyle{plain}
\bibliography{ref}

\vskip 1cm
\begin{itemize}
\item[] Liang Guo\\
Research Center for Operator Algebras, School of Mathematical Sciences, East China Normal University, Shanghai, 200241, P. R. China.\quad
E-mail: 52205500015@stu.ecnu.edu.cn

\item[] Zheng Luo \\
Research Center for Operator Algebras, School of Mathematical Sciences, East China Normal University, Shanghai, 200241, P. R. China.\quad
E-mail: E-mail: 52195500005@stu.ecnu.edu.cn

\item[] Qin Wang \\
Research Center for Operator Algebras,  and Shanghai Key Laboratory of Pure Mathematics and Mathematical Practice, School of Mathematical Sciences, East China Normal University, Shanghai, 200241, P. R. China. \quad
E-mail: qwang@math.ecnu.edu.cn\\

\item[] Yazhou Zhang \\
Research Center for Operator Algebras, School of Mathematical Sciences, East China Normal University, Shanghai, 200241, P. R. China.\quad
E-mail: 52185500010@stu.ecnu.edu.cn
\end{itemize}
\end{document}